\pgfplotsset{compat=1.16}
\newtheorem{theorem}{\bf Theorem}[section]
\newtheorem{lemma}[theorem]{\bf Lemma}
\newtheorem{corollary}[theorem]{\bf Corollary}
\newtheorem{definition}[theorem]{\bf Definition}
\newtheorem{proposition}[theorem]{\bf Proposition}
\newenvironment{proof}{\noindent {\sc Proof.}}{\hfill $\square$}
\newcommand{\R}{\mathbb{R}}
\newcommand{\N}{\mathbb{N}}
\def \M {{\mathcal{M}}} 
\def \N {{\mathcal{N}}} 
\def \L {\mathscr{L}}
\def \H {\mathscr{H}}
\def \Q {\mathcal{Q}}
\def \g {{\gamma}}
\def \r {{\varrho}}
\def \t {{\tau}}
\def \x {{\xi}}
\def \z {{\zeta}}
\def \phi {{\varphi}}
\def \G {{\Gamma}}
\def \O {{\Omega}}
\def \div {{\text{\rm div}}}
\def \loc {{\text{\rm loc}}}
\def\p{\partial}
\def \AS {{ \mathscr{A}_{z_0} }}
\newcommand{\pr}[1]{\left(#1\right)}
\newcommand{\scp}[1]{\langle #1 \rangle}
\newcommand{\mres}{\mathbin{\vrule height 1.6ex depth 0pt width 0.13ex\vrule height 0.13ex depth 0pt width 1.3ex}}
\DeclareRobustCommand{\rchi}{{\mathpalette\irchi\relax}}
\newcommand{\irchi}[2]{\raisebox{\depth}{$#1\chi$}}
\newcommand{\norm}[1]{\left\lVert#1\right\rVert}
\DeclarePairedDelimiter{\abs}{\lvert}{\rvert}
\definecolor{bblue}{rgb}{.6 .6 1}
\definecolor{lblue}{rgb}{.8 .8 1}
\definecolor{ddblue}{rgb}{.4 .4 1}
\definecolor{dddblue}{rgb}{.28 .28 .7}
\definecolor{ddddblue}{rgb}{.02 .02 .6}
\begin{document}
\title{Mean value formulas for classical solutions to uniformly parabolic equations in divergence form}
\author{{\sc{Emanuele Malagoli}
\thanks{Dipartimento di Scienze Fisiche, Informatiche e Matematiche, Universit\`{a} degli Studi di Modena e Reggio Emilia, via Campi 213/b, 41125 Modena (Italy). E-mail: 169864@studenti.unimore.it}, \ 
\sc{Diego Pallara}
\thanks{Dipartimento di Matematica e Fisica ``Ennio De Giorgi'', Universit\`{a} del Salento and INFN, Sezione di Lecce, Ex Collegio Fiorini - Via per Arnesano - Lecce (Italy). E-mail: 	diego.pallara@unisalento.it} \ 
\sc{Sergio Polidoro}
\thanks{Dipartimento di Scienze Fisiche, Informatiche e Matematiche, Universit\`{a} degli Studi di Modena e Reggio Emilia, Via Campi 213/b, 41125 Modena (Italy). E-mail: 	sergio.polidoro@unimore.it}
}}
	
\date{ }
	
\maketitle

\bigskip

\begin{abstract}
We prove surface and volume mean value formulas for classical solutions to uniformly parabolic equations in divergence form. 
We then use them to prove the parabolic strong maximum principle and the parabolic Harnack inequality. 
We emphasize that our results  only rely on the classical theory, and our arguments follow the lines used in the original theory of harmonic functions. We provide two proofs relying on two different formulations of the divergence theorem, one stated for sets with {\em almost $C^1$-boundary}, the other stated for sets with finite perimeter.
\end{abstract}
	
\setcounter{equation}{0} 
\section{Introduction}\label{secIntro}
Let $\Omega$ be an open subset of $\R^{N+1}$. We consider classical solutions $u$ to the equation $\L u = f$ in $\Omega$, where $\L$ is a parabolic operator in divergence form defined for $z=(x,t) \in \R^{N+1}$ as follows
\begin{equation} \label{e-L}
	\L u (z)  := \sum_{i,j=1}^{N}\tfrac{\p}{\p x_i} \left( a_{ij} (z) \tfrac{\p u}{\p x_j}(z) \right) + 
	\sum_{i=1}^{N} b_{i} (z) \tfrac{\p u}{\p x_i}(z) + c(z) u(z) - \, \tfrac{\p u}{\p t}(z).
\end{equation}
In the following we use the notation 
$A(z) := \left( a_{ij}(z) \right)_{i,j=1,\dots, N}, b(z) := \left( b_{1} (z), \dots, b_{N} (z) \right)$ and we write $\L u $ in the short form
\begin{equation} \label{e-LL}
	\L u (z)  := \div \left( A (z) \nabla_x u(z) \right) + \langle b(z), \nabla_x u(z)\rangle + c(z) u(z) - 
	\, \tfrac{\p u}{\p t}(z).
\end{equation}
Here $\div, \nabla_x$ and $\langle \, \cdot \, , \, \cdot \, \rangle$ denote the divergence, the gradient and the inner product in $\R^N$, respectively. We assume that the matrix $A(z)$ is symmetric and that the coefficients of the operator $\L$ are H\"older continuous functions with respect to the parabolic distance. This means that there exist two constants $M>0$ and $\alpha \in ]0,1]$, such that
\begin{equation} \label{e-hc}
	|c (x,t) - c (y,s)| \le M \left(|x-y|^\alpha + |t-s|^{\alpha/2}\right),
\end{equation}
for every $(x,t), (y,s) \in \R^{N+1}$. We require that the above condition is satisfied not only by $c$, 
but also by $a_{ij}, \frac{\p a_{ij}}{\p x_i}, b_{i}, \frac{\p b_{i}}{\p x_i}$, for $i, j = 1, \dots, N$, 
with the same constants $M$ and $\alpha$. We finally assume that the coefficients of $\L$ are bounded and 
that $\L$ is uniformly parabolic, \emph{i.e.}, there exist two constants $\lambda, \Lambda$, with $0 < \lambda < \Lambda$, such that
\begin{equation} \label{e-up}
	\lambda |\xi|^2  \le \langle A(z) \xi, \xi \rangle \le \Lambda |\xi|^2, 
	\quad \left|\tfrac{\p a_{ij}}{\p x_i}\right| \le \Lambda, 
	\quad |b_i(z)| \le \Lambda, \quad |c(z)| \le \Lambda,
\end{equation}
for every $\xi \in \R^N$, for every $z \in \R^{N+1}$, and for $i,j=1, \dots, N$. Under the above assumptions, the classical parametrix method provides us with the existence of a fundamental solution $\Gamma$. In Section \ref{secFundSol} we shall quote from the monograph of Friedman \cite{Friedman} the results we need for our purposes. 

The main achievements of this note are some mean value formulas for the solutions to $\L u = f$ that are written in terms of the level and super-level sets of the fundamental solution $\Gamma$. We extend previous results of Fabes and Garofalo \cite{FabesGarofalo} and Garofalo and Lanconelli \cite{GarofaloLanconelli-1989} in that we weaken the regularity requirement on the coefficients of $\L$ that in \cite{FabesGarofalo, GarofaloLanconelli-1989} are assumed to be $C^\infty$ smooth. As applications of the mean value formulas we give an elementary proof of the parabolic strong maximum principle. 
We note that the conditions on the functions $\frac{\p a_{ij}}{\p x_i}$'s are needed in order to deal with classical solutions to the adjoint equation $\L^* v = 0$, as the mean value formulas rely on the divergence theorem applied to the function $(\x,\tau) \mapsto \Gamma(x,t,\xi,\tau)$.

We introduce some notation in order to state our main results. For every $z_0=(x_0, t_0) \in \R^{N+1}$ and 
for every $r>0$, we set
\begin{equation} \label{e-Psi}
\begin{split}
	\psi_r(z_0) & := \left\{ z \in \R^{N+1} \mid \Gamma(z_0; z ) = \tfrac{1}{r^N} \right\}, 
	\\
	\Omega_r(z_0) & := \left\{ z \in \R^{N+1} \mid \Gamma(z_0; z) > \tfrac{1}{r^N} \right\}.
\end{split}
\end{equation}

\begin{center}
\begin{tikzpicture}
\clip (-.51,7.21) rectangle (6.31,1.99);
\path[draw,thick] (-.5,7.2) rectangle (6.3,2);
\begin{axis}[axis x line=middle, axis y line=middle,
    xtick=\empty,ytick=\empty, 
    ymin=-1.2, ymax=1.2, xmin=-.2,xmax=1.7, samples=121, rotate= -90]
\addplot [ddddblue,line width=.7pt,domain=.001:.1111] {sqrt(- 3 * x * ln(9*x)};
\addplot [ddddblue,line width=.7pt,domain=.001:.1111] {- sqrt(- 3 * x * ln(9*x))};
\addplot [dddblue,line width=.7pt,domain=.001:.25] {sqrt(- 2 * x * ln(4*x)};
\addplot [dddblue,line width=.7pt,domain=.001:.25] {- sqrt(- 2 * x * ln(4*x))};
\addplot [ddblue,line width=.7pt, domain=.001:1] {sqrt(- 2 * x * ln(x))}; 
\addplot [ddblue,line width=.7pt,domain=.001:1] {- sqrt(- 2 * x * ln(x))};
\addplot [black,line width=1pt, domain=-.01:.01] {sqrt(.0001 - x * x)} node[above] {\quad $z_0$};
\addplot [black,line width=1pt, domain=-.01:.01] {-sqrt(.0001 - x * x)} 
node[below]{\qquad \qquad \qquad \qquad \quad \quad \quad \quad {\color{ddddblue} $\psi_r(z_0)$}};
\end{axis}
\draw [<-,line width=.4pt] (2.8475,7) -- (2.8475,2);
\end{tikzpicture} 

{{\sc Fig.1}} - $\psi_r(z_0)$ for three different values of $r$.
\end{center}

Similarly to the elliptic case, we call $\psi_r(z_0)$ and $\Omega_r(z_0)$ respectively the \emph{parabolic sphere} and the \emph{parabolic ball} with radius $r$ and ``center'' at $(x_0,t_0)$. Note that, unlike the elliptic setting, $z_0$ belongs to the topological boundary of $\Omega_r(z_0)$. Because of the properties of the fundamental solution of uniformly parabolic operators, the parabolic balls $\Omega_r(z_0)$ are bounded sets and shrink to the center $z_0$ as $r \to 0$. We finally introduce the following kernels 
\begin{equation} \label{e-kernels}
\begin{split}
	K (z_0; z) & := 
	\frac{\langle A(z) \nabla_x \Gamma(z_0; z), \nabla_x \Gamma(z_0; z) \rangle }
	{|\nabla_{(x,t)}\Gamma(z_0;z)|},
	\\
	M(z_0; z) & := 
	\frac{\langle A(z) \nabla_x \Gamma(z_0; z), \nabla_x \Gamma(z_0; z) \rangle }
	{\Gamma(z_0; z)^2}.
\end{split}
\end{equation}
Here $\nabla_x \Gamma(z_0; z)$ and $|\nabla_{(x,t)}\Gamma(z_0; z)|$ denote the gradient with respect to the space variable $x$ and the norm of the gradient with respect to the variables $(x,t)$ of $\Gamma$, respectively. Moreover, we agree to set $K (z_0; z) = 0$ whenever $\nabla_{(x,t)}\Gamma(z_0;z)=0$. In the following, $\H^{N}$ denotes the $N$-dimensional Hausdorff measure. The first achievements of this note are the following mean value formulas.

\begin{theorem} \label{th-1}
 Let $\Omega$ be an open subset of $\R^{N+1}$, $f\in C(\Omega)$ and let $u$ be a classical solution to $\L u = f$ in $\Omega$. Then, for every $z_0 \in \Omega$ and for almost every $r>0$ such that $\overline{\Omega_r(z_0)} \subset \Omega$ we have 
 \begin{align*} 
	u(z_0) = \int_{\psi_r(z_0)} K (z_0; z) u(z) \, d \H^{N} (z) 
	+ & \int_{\Omega_r(z_0)} f (z) \left( \tfrac{1}{r^N} - \Gamma(z_0; z) \right)\ dz  \\
	 + & \frac{1}{r^N} \int_{\Omega_r(z_0)} \left( \div \, b(z) - c(z) \right) u(z) \ dz, 
	\\
	u(z_0) = \frac{1}{r^N} \int_{\Omega_r(z_0)} \!\!\!\!\!  M (z_0; z) u(z) \, dz 
	+ \frac{N}{r^N} & \int_0^{r} \left(\r^{N-1}
	\int_{\Omega_\r (z_0)} f (z) \left( \tfrac{1}{\r^N} - \Gamma(z_0;z) \right) dz \right) d \r  \\
    + &  \frac{N}{r^N} \int_0^{r} \left(\frac{1}{\r}
	\int_{\Omega_\r (z_0)} \left( \div \, b(z) - c(z) \right) u(z) \, dz \right) d \r.
\end{align*}
The second statement holds for \emph{every} $r>0$ such that ${\Omega_r(z_0)} \subset \Omega$.
\end{theorem}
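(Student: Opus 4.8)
The plan is to derive the surface formula first and then integrate in $r$ to obtain the volume formula. The starting point is the integration-by-parts identity on the parabolic ball $\Omega_r(z_0)$ obtained by pairing $\L u = f$ against the function $z \mapsto \Gamma(z_0;z) - r^{-N}$, which vanishes on $\psi_r(z_0) = \partial \Omega_r(z_0)$ (modulo the point $z_0$, which has zero capacity / measure and must be handled by an $\varepsilon$-exhaustion around $z_0$). Concretely, I would apply the divergence theorem on $\Omega_r(z_0) \setminus \Omega_\varepsilon(z_0)$ to the vector field built from $\Gamma(z_0;\cdot)\,A\nabla_x u - u\, A \nabla_x \Gamma(z_0;\cdot)$ together with the time-component coming from the $-\partial_t$ term, using that $\Gamma(z_0;\cdot)$ solves the adjoint equation $\L^* \Gamma(z_0;\cdot) = 0$ in $\Omega \setminus \{z_0\}$ (this is exactly where the hypotheses on $\partial a_{ij}/\partial x_i$ enter, as noted after \eqref{e-up}). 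The adjoint terms involving $b$ and $c$ do not cancel and survive as the $\int_{\Omega_r(z_0)} (\div\, b - c)\, u$ term; the $f$ term appears with weight $r^{-N} - \Gamma(z_0;z)$. Letting $\varepsilon \to 0$, the boundary integral over $\psi_\varepsilon(z_0)$ concentrates at $z_0$ and produces $u(z_0)$ (this uses the normalization and the asymptotics of $\Gamma$ near its pole, quoted from Friedman in Section~\ref{secFundSol}), while the boundary integral over $\psi_r(z_0)$ becomes $\int_{\psi_r(z_0)} K(z_0;z)\, u(z)\, d\H^N$ once one computes the normal derivative of $\Gamma(z_0;\cdot)$ on the level set and rewrites it via the coarea-type factor $|\nabla_{(x,t)}\Gamma|$ in the denominator of $K$. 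The ``almost every $r$'' restriction in the first statement is precisely the set of $r$ for which $\psi_r(z_0)$ is a genuine $C^1$ (or almost-$C^1$, or finite-perimeter) hypersurface and Sard-type / coarea arguments apply; this is the point where the two promised proofs diverge.

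For the second (volume) formula, I would not repeat the integration by parts but instead integrate the surface identity against the measure $N \rho^{N-1}\, d\rho$ over $\rho \in (0,r)$ and divide by $r^N$. The key computational step is the coarea formula: for a suitable integrand $g$,
\begin{equation*}
\int_0^r \rho^{N-1} \left( \int_{\psi_\rho(z_0)} \frac{g(z)}{|\nabla_{(x,t)}\Gamma(z_0;z)|}\, d\H^N(z) \right) d\rho
= \int_0^r \rho^{N-1} \left( -\frac{d}{d\rho} \int_{\Omega_\rho(z_0)} g(z)\, \Gamma(z_0;z)^{?}\, dz \right) d\rho,
\end{equation*}
which, applied with $g(z) = \langle A(z)\nabla_x\Gamma, \nabla_x\Gamma\rangle\, u(z)$ and combined with the identity $\Gamma(z_0;z) = \rho^{-N}$ on $\psi_\rho(z_0)$, converts the surface kernel $K$ into the volume kernel $M$ after one more integration by parts in $\rho$ (the factor $\Gamma^2$ in the denominator of $M$ versus $|\nabla_{(x,t)}\Gamma|$ in $K$ is produced exactly by this change of variables together with $d(\rho^{-N}) = -N\rho^{-N-1} d\rho$). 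The terms with $f$ and with $\div\, b - c$ are simply carried along, giving the nested $\rho$-integrals in the statement. Because the volume formula no longer sees the individual level sets $\psi_\rho$ but only averages over them, the exceptional set of bad radii is integrated out, and one recovers the claim for \emph{every} $r$ with $\Omega_r(z_0) \subset \Omega$ — this is the reason for the last sentence of the theorem.

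The main obstacle I anticipate is the rigorous justification of the divergence theorem on $\Omega_r(z_0)$ and of the coarea formula on the level sets $\psi_r(z_0)$, given that $\Gamma$ is only as regular as the (H\"older-type) hypotheses allow and that $\Omega_r(z_0)$ is not a priori smooth and touches its ``center'' $z_0$ in a singular way. This is precisely why the paper offers two versions: one invoking a divergence theorem for sets with \emph{almost $C^1$-boundary}, the other for sets of finite perimeter. In either case one must (i) show $\Omega_r(z_0)$ is bounded and, for a.e.\ $r$, has the required boundary regularity — via the strict positivity of $|\nabla_{(x,t)}\Gamma|$ on $\psi_r(z_0)$ for a.e.\ $r$ together with the classical Gaussian bounds on $\Gamma$ and its derivatives from Friedman \cite{Friedman}; and (ii) control the flux through $\psi_\varepsilon(z_0)$ as $\varepsilon \to 0$ uniformly enough to extract exactly $u(z_0)$, using the continuity of $u$ at $z_0$ and the quoted asymptotics of $\Gamma$ near the diagonal. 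Once these two ingredients are in place, the algebraic manipulations (the integration by parts and the coarea change of variables) are routine.
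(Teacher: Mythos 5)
Your overall strategy coincides with the paper's: the Green identity obtained by pairing $\L u=f$ with $v(z)=\Gamma(z_0;z)-r^{-N}$ (using $\L^*\Gamma(z_0;\cdot)=0$ away from $z_0$, which is where the hypotheses on $\partial a_{ij}/\partial x_i$ enter), a generalized divergence theorem on the parabolic ball for a.e.\ $r$, and the coarea substitution $y=\rho^{-N}$ converting the surface kernel $K$ into the volume kernel $M$ and washing out the exceptional radii, so that the second formula holds for every $r$. The gap is in the single most delicate step, namely how $u(z_0)$ is extracted. You exhaust $\Omega_r(z_0)$ by removing a smaller parabolic ball $\Omega_\varepsilon(z_0)$ and assert that the flux through $\psi_\varepsilon(z_0)$ concentrates to $u(z_0)$ ``using the normalization and the asymptotics of $\Gamma$ near its pole''. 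This fails on two counts. First, the excision does not separate the domain from the singularity: $z_0$ belongs to the topological boundary of \emph{every} parabolic ball $\Omega_\varepsilon(z_0)$ (as remarked after \eqref{e-Psi}), and one checks, e.g.\ along curves $x=x_0+c\sqrt{t_0-t}$ with $c$ large, that $z_0$ still lies in the closure of $\Omega_r(z_0)\setminus\overline{\Omega_\varepsilon(z_0)}$; hence the field $\Phi$ is still not $C^1$ up to the boundary of the truncated domain and the divergence theorem is no easier to apply there than on $\Omega_r(z_0)$ itself. Second, even granting the integration by parts, the claim $\int_{\psi_\varepsilon(z_0)}K(z_0;z)u(z)\,d\H^N\to u(z_0)$ is essentially the surface mean value formula for small radii (already for $u\equiv 1$, $f=0$, $b=c=0$ it asserts $\int_{\psi_\varepsilon}K\,d\H^N\to 1$), and it does not follow from Friedman's normalization $\int_{\R^N}\Gamma(x_0,t_0;\xi,t)\,d\xi\to 1$ as $t\to t_0$, which concerns flat time slices, not level surfaces of $\Gamma$; moreover on $\psi_\varepsilon(z_0)$ the function $v$ equals $\varepsilon^{-N}-r^{-N}\neq 0$, so the flux there also carries the terms $vA\nabla_x u$, $uvb$ and $uv$, whose vanishing in the limit would need a separate proof.

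The paper avoids both difficulties by truncating with the half-space $\{t<t_0-\varepsilon_k\}$ (Proposition \ref{prop-1bis}): the extra boundary piece is the flat slice $\Omega_r(z_0)\cap\{t=t_0-\varepsilon_k\}$, on which $\langle e,\Phi\rangle=uv$, and the convergence of $\int_{\mathcal{I}_r^k(z_0)}u(x,t_0-\varepsilon_k)\bigl(\Gamma(x_0,t_0;x,t_0-\varepsilon_k)-r^{-N}\bigr)\,dx$ to $u(z_0)$ then really is a consequence of the approximate-identity property of $\Gamma$, once the two estimates \eqref{eq-claim-2} (shrinking of the slice $\mathcal{I}_r^k(z_0)$ and vanishing of the Gaussian mass outside it) are established via Lemma \ref{lem-localestimate*} and the upper bound \eqref{upper-bound}. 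To repair your argument, replace the $\Omega_\varepsilon$-excision by this time-slice truncation; otherwise you must independently prove $\int_{\psi_\varepsilon}K\,d\H^N\to 1$ together with the negligibility of the remaining flux terms on $\psi_\varepsilon(z_0)$, which is substantially harder and is not supplied by anything quoted from Section \ref{secFundSol}.
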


Note that $\tfrac{1}{r^N} - \Gamma(z_0; z) < 0$ in the set $\Omega_r(z_0)$, because of its very definiton \eqref{e-Psi}. This fact, together with the non-negativity of the kernels \eqref{e-kernels} will be used in the sequel to obtain the strong maximum principle from Theorem \ref{th-1}.

We next put Theorem \ref{th-1} in its context. It restores the mean value formulas first proved by Pini in \cite{Pini1951} for the heat equation $\p_t u = \p_x^2 u$, then by Watson in \cite{Watson1973} for the heat equation in several space variables. We also recall the mean value formulas first proved by Fabes and Garofalo in \cite{FabesGarofalo} for the equation $\L u = 0$, then extended by Garofalo and Lanconelli \cite{GarofaloLanconelli-1989} to the equation $\L u = f$, where the operator $\L$ has the form \eqref{e-L} and its coefficients are assumed to be $C^\infty$ smooth. This extra regularity assumption on the coefficients of $\L$ is due to the fact that the mean value formula relies on the divergence theorem applied to the parabolic ball $\Omega_r(z_0)$. Since the explicit epression of the fundamental solution $\Gamma$ is not available when the coefficients of $\L$ are variable, the authors of \cite{FabesGarofalo} and \cite{GarofaloLanconelli-1989} rely on the Sard theorem (see \cite{Sard}) which guarantees that $\psi_r(z_0)$ is a manifold for \emph{almost every} positive $r$, provided that the fundamental solution $\Gamma$ is $N+1$ times differentiable. The smoothness of the coefficients of the operator $\L$ is used in \cite{FabesGarofalo} and \cite{GarofaloLanconelli-1989} in order to have the needed regularity on $\Gamma$. 

The main goal of this note is the restoration of natural regularity hypotheses for the existence of classical solutions to $\L u =f$. These assumptions can be further weakened, since the existence of a fundamental solution has been proved for operators with Dini continuous coefficients. We prefer to keep our treatment in the usual setting of H\"older continuous functions for the sake of simplicity. The unnecessary regularity conditions on the coefficients of $\L$ can be removed in two ways. Following an approach close to the classical one, it is possible to rely on a result due to Dubovicki\u{\i} \cite{Dubovickii} (see also Bojarski, Ha{j\l}asz, and Strzelecki \cite{BojHajStrz}) which allows to reduce the regularity requirement on $\Gamma$ in order to apply a generalized divergence theorem for {\em sets with almost $C^1$ boundary}. This is presented in Section \ref{SectionDivergence} and applied in Section \ref{sectionProof}. The other approach relies on geometric measure theory and is presented in the last section: we show how the proof of Theorem \ref{th-1} can be modified relying on the generalized divergence theorem proved by De Giorgi \cite{DeGiorgi2,DeGiorgi3} 
in the framework of finite perimeter sets. As said before, this deep theory is not necessary in the present context, but it is more flexible and its generalization to Carnot groups (where the analogue of Dubovicki\u{\i}'s Theorem is not available) will allow us to extend the results of the present paper to {\em degenerate} parabolic operators. We have presented the application to uniformly parabolic operators to pave the way to this generalization, which will be the subject of a forthcoming paper. 

\medskip

The mean value formulas stated in Theorem \ref{th-1} provide us with a simple proof of the strong maximum (minimum) principle for the operator $\L$ when $c = 0$. Note that, in this case, the constant function $u(x,t) = 1$ is a solution to $\L u = 0$, so that the mean value formula  gives $\frac{1}{r^N}\int_{\Omega_r(z_0)}M(z_0;z)dz=1$. In order to state this result we first introduce the notion of \emph{attainable set}. We say that a curve $ \g: [0,T] \rightarrow \R^{N+1}$ is \emph{$\L$-admissible} if it is absolutely continuous and 
\begin{equation*}
	 \dot{\g}(s) = \left( \dot {x}_1(s), \dots, \dot {x}_N(s), -1 \right)
\end{equation*}	
for almost every $s \in [0,T]$, with $\dot{x}_1, \dots, \dot{x}_N \in L^{2}([0,T])$.

\begin{definition} \label{def-prop-set}
Let $\O$ be any open subset of $\R^{N+1}$, and let $z_0 \in \O$. The \emph{attainable set} is 
\begin{equation*}
	\AS  ( \O ) = 
	\begin{Bmatrix}
	z \in \O \mid \hspace{1mm} \text{\rm there exists an} \ \L - \text{\rm admissible curve}
	\ \g : [0,T] \rightarrow \O \hspace{1mm} \\ 
	\hfill \text{\rm such that} \ \g(0) = z_0 \hspace{1mm} {\rm and}  
	\hspace{1mm} \g(T) = z
	\end{Bmatrix}.
\end{equation*}
Whenever there is no ambiguity on the choice of the set $\O$ we denote $\AS = \AS ( \O )$. 
\end{definition}

\begin{proposition} \label{prop-smp}
Let $\O$ be any open subset of $\R^{N+1}$, and suppose that $c = 0$. Let $z_0=(x_0,t_0) \in \O$ and let $u$ 
be a classical solution to 
$\L u = f$. If $u (z_0) = \max_\Omega u$ and $f \ge 0$ in $\Omega$, then 
\begin{equation*}
u(z) = u(z_0) \quad \text{and} \quad f(z) = 0 \qquad \text{for every} \ z \in \overline {\AS  ( \O )}.
\end{equation*}
The analogous result holds true if $u (z_0) = \min_\Omega u$ and $f \le 0$ in $\Omega$. 
\end{proposition}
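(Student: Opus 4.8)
\medskip
\noindent\emph{Plan.} I would deduce the statement from a \emph{local propagation lemma} and then iterate it along $\L$-admissible curves.

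\smallskip
\noindent\emph{Step 1 (local lemma).}
The claim is: if $z'\in\Omega$ and $u(z')=\max_{\Omega}u$, then there are an auxiliary operator $\L_\mu:=\L+\mu\,\mathrm{id}$ ($\mu\in\R$) and $\rho'>0$ such that $u\equiv u(z')$ and $f\equiv 0$ on the $\L_\mu$-parabolic ball $\Omega^\mu_{\rho'}(z')$. Set $w:=u(z')-u\ge 0$; since $c=0$ the constant $u(z')$ solves $\L\,u(z')=0$, so $\L w=-f\le 0$ and $w(z')=0$. Applying Theorem~\ref{th-1} directly to $w$ is inconclusive, because its lower order term $\frac1{r^N}\int(\div b-c)w=\frac1{r^N}\int(\div b)\,w$ carries no sign (the constant solves $\L$ but not the adjoint, $\L^*1=-\div b$). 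To cure this, fix a ball $B\ni z'$ with $\overline B\subset\Omega$, put $\mu:=\inf_{\overline B}\div b$, and let $\L_\mu:=\L+\mu\,\mathrm{id}$, an operator of the same class with constant zeroth order coefficient $\mu$. Then $v:=e^{\mu t}w$ is a classical solution of $\L_\mu v=-e^{\mu t}f=:\tilde f\le 0$, with $v\ge 0$ and $v(z')=0$. Denote by $\Gamma_\mu,\Omega^\mu_\sigma,M_\mu$ the fundamental solution, the parabolic balls and the kernel attached to $\L_\mu$. For $r$ small all balls $\Omega^\mu_\sigma(z')$ with $\sigma\le r$ lie in $B$ (they shrink to $z'$ as $\sigma\to 0$), and there $\div b-\mu\ge 0$. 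The second (volume) mean value formula of Theorem~\ref{th-1}, which holds for \emph{every} such $r$, then gives
\[
0=v(z')=\frac1{r^N}\!\int_{\Omega^\mu_r(z')}\!\!M_\mu\,v\,dz
\;+\;\frac N{r^N}\!\int_0^r\!\sigma^{N-1}\!\!\int_{\Omega^\mu_\sigma(z')}\!\!\tilde f\Big(\tfrac1{\sigma^N}-\Gamma_\mu\Big)dz\,d\sigma
\;+\;\frac N{r^N}\!\int_0^r\!\frac1\sigma\!\int_{\Omega^\mu_\sigma(z')}\!\!(\div b-\mu)\,v\,dz\,d\sigma .
\]
The first term is $\ge 0$ ($M_\mu\ge 0$, $v\ge 0$); the second is $\ge 0$ ($\tilde f\le 0$ and $\tfrac1{\sigma^N}-\Gamma_\mu<0$ on $\Omega^\mu_\sigma(z')$); the third is $\ge 0$ ($\div b-\mu\ge 0$ on $\Omega^\mu_\sigma(z')$, $v\ge 0$). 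Since the sum vanishes, each vanishes; in particular $\int_{\Omega^\mu_r(z')}M_\mu v\,dz=0$. As $M_\mu(z';\cdot)$ is positive a.e. on $\Omega^\mu_r(z')$ (it vanishes only on the negligible set $\{\nabla_x\Gamma_\mu(z';\cdot)=0\}$) and $v$ is continuous and $\ge0$, this forces $v\equiv 0$, i.e. $u\equiv u(z')$, on the \emph{open} set $\Omega^\mu_r(z')$; there $\L u=\L(\mathrm{const})=0$, so $f\equiv 0$ on $\Omega^\mu_r(z')$ as well. This proves Step~1 with $\rho'=r$.

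\smallskip
\noindent\emph{Step 2 (propagation and conclusion).}
Let $z_1\in\AS(\O)$ and choose an $\L$-admissible curve $\gamma=(\chi(\cdot),t_0-\,\cdot\,):[0,T]\to\Omega$ with $\gamma(0)=z_0$, $\gamma(T)=z_1$. Put $s^*:=\sup\{s\in[0,T]:u(\gamma(\sigma))=u(z_0)\ \forall\,\sigma\le s\}$; by continuity $u(\gamma(s^*))=u(z_0)$, so $z':=\gamma(s^*)=(x',t')$ is a maximum point and Step~1 yields $\L_\mu,\Gamma_\mu$ and $\rho'>0$. If $s^*<T$, then for all small $s>0$ the point $\gamma(s^*+s)$ lies in $\Omega^\mu_{\rho'}(z')$: its time coordinate equals $t'-s$, while $|\chi(s^*+s)-x'|\le\big(s\int_{s^*}^{s^*+s}|\dot\chi|^2\big)^{1/2}=o(\sqrt s)$ by Cauchy--Schwarz and $\dot\chi\in L^2$, so the Gaussian lower bound $\Gamma_\mu(z';\xi,\tau)\ge c_1(t'-\tau)^{-N/2}e^{-c_2|\xi-x'|^2/(t'-\tau)}$ (from the parametrix construction, cf.\ Section~\ref{secFundSol}) gives $\Gamma_\mu(z';\gamma(s^*+s))\ge\frac{c_1}2 s^{-N/2}>(\rho')^{-N}$ for $s$ small. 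By Step~1, $u(\gamma(s^*+s))=u(z')=u(z_0)$ for all small $s>0$, which together with $u\circ\gamma\equiv u(z_0)$ on $[0,s^*]$ contradicts the maximality of $s^*$. Hence $s^*=T$ and $u(z_1)=u(z_0)$. As $z_1\in\AS(\O)$ was arbitrary, $u\equiv u(z_0)$ on $\AS(\O)$, hence on $\overline{\AS(\O)}$ by continuity. Now every point of $\AS(\O)$ is a maximum point, so by Step~1 $f\equiv 0$ on the open set $\mathcal U:=\bigcup_{z'\in\AS(\O)}\Omega^{\mu(z')}_{\rho'(z')}(z')$; since the centre of a parabolic ball lies on its boundary, $\AS(\O)\subset\overline{\mathcal U}$, and continuity of $f$ yields $f\equiv 0$ on $\overline{\mathcal U}\supset\overline{\AS(\O)}$. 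The minimum statement follows by applying all the above to $-u$ and $-f$.

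\smallskip
\noindent\emph{Expected main obstacle.}
The genuinely delicate point is the first order term: the naive sign argument collapses precisely because $\L^*1\ne 0$ when $b\not\equiv0$, and the weight $e^{\mu t}$ — with $\mu$ chosen so that the \emph{adjoint} zeroth order coefficient $\div b-\mu$ is nonnegative on the relevant balls — is exactly what makes all three terms of the volume formula nonnegative, hence individually zero. The only other non-routine ingredient is the geometric fact used in Step~2, that an $\L$-admissible curve instantly enters the parabolic ball of its current point; this rests on the $L^2$-regularity of $\dot\chi$ together with the one-sided Gaussian estimate for $\Gamma_\mu$. The remaining passages (the continuity and closure arguments, and the a.e.\ positivity of $M_\mu$) are standard.
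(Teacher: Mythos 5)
Your argument is correct and shares the paper's overall architecture: a local constancy lemma on parabolic balls obtained from the volume mean value formula by a sign argument, followed by propagation along $\L$-admissible curves via the estimate $|x(\overline t+s)-x_1|\le\|\dot x\|_{L^2}\sqrt{s}$ and the fact that the curve immediately re-enters the parabolic ball of its current point. Where you genuinely diverge is in the treatment of the drift. The paper first proves the claim under the extra hypothesis $\div\, b\ge 0$ (exploiting the identity obtained by applying the mean value formula to the constant solution $1$, and subtracting), and then removes that hypothesis by the spatial dilation $v(y,t)=u\left(e^{-k(t-t_0)}y,t\right)$, which makes the new drift have nonnegative divergence only on a slab $\{t\ge t_0-\delta\}$ and therefore forces an iteration over time slabs. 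You instead absorb the difficulty locally: the weight $e^{\mu t}$ with $\mu=\inf_{\overline B}\div\, b$ turns $w=u(z')-u\ge 0$ into a nonnegative solution of $\L_\mu v=-e^{\mu t}f\le 0$ for an operator whose adjoint zeroth-order coefficient $\div\, b-\mu$ is nonnegative on the relevant balls, so all three terms of the volume formula are nonnegative and must vanish. This is exactly the device the paper deploys for Proposition \ref{prop-smp1}; importing it into Proposition \ref{prop-smp} buys a purely local proof with no dilation and no iteration, at the modest cost of working with a family of auxiliary operators $\L_{\mu(z')}$ (harmless, since they differ from $\L$ only by a bounded constant zeroth-order term, so Theorem \ref{th-1} and Lemma \ref{lem-localestimate*} apply to each of them with uniform constants).

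Two points should be tightened. First, the ``Gaussian lower bound from the parametrix construction'' invoked in Step 2 is not what the parametrix method provides (it gives the upper bounds \eqref{upper-bound}); what you actually need is a lower bound only near the diagonal, and this is precisely Proposition \ref{prop-localestimate} together with Lemma \ref{lem-localestimate*}: since $Z(z';\gamma(s^*+s))\to+\infty$ as $s\to 0^+$ (because $|\chi(s^*+s)-x'|^2/s\to 0$), the point $\gamma(s^*+s)$ lies in $\Omega^{*}_{\rho'}(z')\subset\Omega^{\mu}_{\rho'}(z')$ for small $s$; this is how the paper argues and is the correct reference. Second, the a.e.\ positivity of $M_\mu(z';\cdot)$ on the parabolic ball is not free: the paper justifies it through Dubovicki\v{\i}'s theorem applied to $\Gamma$ combined with the coarea formula, and the same justification is needed for $\Gamma_\mu$. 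Neither point affects the validity of the proof.
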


If we remove the assumption $c =0$ we obtain the following weaker result. 

\begin{proposition} \label{prop-smp1}
Let $\O$ be any open subset of $\R^{N+1}$. Let $u \le 0$ ($u \ge 0$, respectively) be a classical solution to 
$\L u = f$ with $f \ge 0$ ($f \le 0$, respectively) in $\Omega$. If $u (z_0) = 0$ for some $z_0 \in \Omega$, then 
\begin{equation*}
u(z) = 0 \quad \text{and} \quad f(z) = 0 \qquad \text{for every} \ z \in \overline {\AS  ( \O )}.
\end{equation*}
\end{proposition}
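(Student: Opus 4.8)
The plan is to deduce the statement from Proposition~\ref{prop-smp} by an exponential change of unknown that renders the zero-order coefficient sign-definite, followed by absorbing that coefficient into the right-hand side in such a way that the new source remains nonnegative.

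First I would set $v(x,t):=e^{\Lambda t}u(x,t)$, where $\Lambda$ is the constant appearing in \eqref{e-up}. Since $t\mapsto e^{\Lambda t}$ is smooth, $v$ is again a classical solution, and substituting $u=e^{-\Lambda t}v$ into $\L u=f$ gives, after multiplication by $e^{\Lambda t}$,
\begin{equation*}
	\div\big(A\nabla_x v\big)+\langle b,\nabla_x v\rangle+(c+\Lambda)v-\partial_t v=\widetilde f\quad\text{in }\Omega,\qquad \widetilde f:=e^{\Lambda t}f .
\end{equation*}
Because $e^{\Lambda t}>0$, we have $v\le 0$ in $\Omega$, $v(z_0)=0$ (so that $v(z_0)=\max_\Omega v$), and $\widetilde f\ge 0$ in $\Omega$; moreover the bound $|c|\le\Lambda$ in \eqref{e-up} forces $c+\Lambda\ge 0$. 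Moving this term to the right-hand side, $v$ is a classical solution of
\begin{equation*}
	\div\big(A\nabla_x v\big)+\langle b,\nabla_x v\rangle-\partial_t v=g\quad\text{in }\Omega,\qquad g:=\widetilde f-(c+\Lambda)v ,
\end{equation*}
which is an equation of the form \eqref{e-LL} whose zero-order coefficient vanishes. Here $g\in C(\Omega)$, being a sum and product of continuous functions, and $g\ge 0$ in $\Omega$: indeed $\widetilde f\ge 0$, while $c+\Lambda\ge 0$ together with $v\le 0$ yields $(c+\Lambda)v\le 0$.

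The new operator satisfies the same structural assumptions as $\L$ (its zero-order coefficient being now identically zero), and the notion of $\L$-admissible curve—hence the attainable set $\AS(\O)$—does not involve the coefficients of the equation, so it is unchanged. We may therefore apply Proposition~\ref{prop-smp} to this operator, to the solution $v$, and to the source $g\ge 0$: it yields $v(z)=v(z_0)=0$ and $g(z)=0$ for every $z\in\overline{\AS(\O)}$. Undoing the substitution, $u(z)=e^{-\Lambda t}v(z)=0$ on $\overline{\AS(\O)}$; and since $v=0$ there, the identity $g=\widetilde f-(c+\Lambda)v$ forces $\widetilde f=0$, i.e. $f(z)=e^{-\Lambda t}\widetilde f(z)=0$ on $\overline{\AS(\O)}$. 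The case $u\ge 0$, $f\le 0$ follows by applying what has just been proved to $-u$, which satisfies $\L(-u)=-f\ge 0$, $-u\le 0$, and $(-u)(z_0)=0$.

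The argument is essentially routine once the transformation is fixed; the one point requiring care is the sign bookkeeping—the exponential weight must be chosen so that the new zero-order coefficient $c+\Lambda$ is \emph{nonnegative}, since that is exactly what makes $(c+\Lambda)v\le 0$ and keeps the modified source $g$ nonnegative, allowing Proposition~\ref{prop-smp} to apply. A minor point is that $\Omega$ may be unbounded, but all the inequalities above are used only pointwise, so no integrability issue arises, and Proposition~\ref{prop-smp} is itself stated for an arbitrary open set.
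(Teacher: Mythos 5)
Your proof is correct. Both you and the paper hinge on the same device, an exponential weight in time $v=e^{kt}u$, but the reductions diverge afterwards. The paper picks $k$ so that $\div\, b-c-k\ge 0$, keeps the zero-order coefficient $c+k$ inside the operator, and then reruns the sign argument of the mean value formula of Theorem \ref{th-1} for the equation $\L v+kv=e^{kt}f$ (all three terms on the right of the volume formula have a fixed sign and sum to $v(z_1)=0$, forcing each to vanish), essentially repeating the proof of Proposition \ref{prop-smp} rather than citing it. You instead pick $k=\Lambda$ so that $c+\Lambda\ge 0$, move $(c+\Lambda)v$ to the right-hand side as part of a new source $g\ge 0$, and invoke Proposition \ref{prop-smp} verbatim for the operator with vanishing zero-order term; the observation that the attainable set does not depend on the coefficients makes this a genuine black-box reduction. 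Your route is the cleaner of the two in that it reuses the already-established proposition instead of redoing its argument, at the small price of checking that $g$ is continuous and nonnegative (which you do); the paper's route avoids introducing a solution-dependent source but must re-verify the mean-value sign bookkeeping for the shifted operator. The sign checks in your write-up ($c+\Lambda\ge0$ from \eqref{e-up}, $(c+\Lambda)v\le0$ from $v\le0$, and the recovery of $f=0$ from $g=\widetilde f$ once $v\equiv0$ on $\overline{\AS(\O)}$) are all in order, as is the reduction of the second case to the first via $-u$.
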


In the remaning part of this intoduction we focus on some modified mean value formulas useful in the proof of parabolic Harnack inequality. As already noticed, the main difficulty one encounters in the proof of the Harnack inequality is due to the unboundedness of the kernels introduced in \eqref{e-kernels}. In order to overcome this issue, we can rely on the idea introduced by Kupcov in \cite{Kupcov4}, and developed by Garofalo and Lanconelli in \cite{GarofaloLanconelli-1989} in the case of parabolic operators with smooth coefficients. This method provides us with some bounded kernels and gives us a useful tool for a direct proof of the Harnack inequality. We outline here the procedure. Let $m$ be a positive integer, and let $u$ be a solution to $\L u = f$ in $\R^{N+1}$. We set

\begin{equation*}
 \widetilde u(x,y,t) := u(x,t), \qquad \widetilde f(x,y,t) := f(x,t),  \qquad (x,y,t) \in \R^{N}\times \R^{m} \times \R,
\end{equation*}
and we note that 
\begin{equation*}
 \widetilde \L \ \widetilde u(x,y,t) = \widetilde f(x,y,t) \qquad \widetilde \L 
 = \L + \sum_{j=1}^{m}\tfrac{\partial^2}{\partial y^2_j} = \L + \Delta_y.
\end{equation*}
Moreover, if $\Gamma$ and $K_m$ denote fundamental solutions of $\L$ and of the heat equation in $\R^m$, respectively, then the function
\begin{equation*}
 \widetilde \Gamma(\xi, \eta, \tau ;x,y,t) = \Gamma (\xi, \tau ;x,t) K_m (\eta,\tau ;y,t)
\end{equation*}
is a fundamental solution of $\widetilde \L$. Then, integrating with respect to $y$ in the mean value formulas of Theorem \ref{th-1}, applied to $\widetilde u$ and to the operator $\widetilde \L$, gives new kernels, that are bounded whenever $m>2$. We intoduce further notations.
\begin{equation} \label{e-Omegam}
\begin{split}
	\Omega^{(m)}_r(z_0) := & \left\{ z \in \R^{N+1} \mid (4 \pi (t_0-t))^{-m/2}\Gamma(z_0; z) > 
	\tfrac{1}{r^{N+m}} \right\}, \\
	N_r(z_0;z) := & 2 \sqrt{t_0-t}\sqrt{\log\left(\tfrac{r^{N+m}}{ (4 \pi (t_0-t))^{m/2}}  \Gamma(z_0;z) \right)},
	\\
	M_r^{(m)} (z_0;z):= & \omega_m N_r^m(z_0;z) \left( M(z_0;z) + \frac{m}{m+2} \cdot \frac {N_r^2(z_0;z)}{4(t_0-t)^2} \right),\\
	W_r^{(m)} (z_0;z):= & \frac{\omega_m}{r^{N+m}} N_r^m(z_0;z) - \frac{m}{2} \cdot \frac{\omega_m}{(4 \pi)^{m/2}}  \, 
	\Gamma(z_0,z) \cdot \widetilde \gamma \left(\frac{m}{2}; \frac {N_r^2(z_0;z)}{4(t_0-t)} \right),
\end{split}
\end{equation}
where $M(z_0;z)$ is the kernel introduced in \eqref{e-kernels}, $\omega_m$ denotes the volume of the $m$-dimensional unit ball 
and $\widetilde \gamma$ is the lower incomplete gamma function
\begin{equation*}
  \widetilde \gamma(s;w) := \int_0^w \tau^{s-1} e^{- \tau} d \tau.
\end{equation*}
Note that the function $N(z_0,z)$ is well defined for every $z \in \Omega^{(m)}_r(z_0)$, as the argument of 
the logarithm is positive, and that we did not point out the dependence of $N_r$ on the space dimension
$m$ to avoid a possible confusion with its powers appearing in the definitions of $M_r^{(m)}$ and 
$W_r^{(m)}$.
\begin{proposition} \label{prop-2}
 Let $\Omega$ be an open subset of $\R^{N+1}$, and let $u$ be a classical solution to $\L u = f$ in $\Omega$. Then, for every $z_0 \in \Omega$ and for every $r>0$ such that ${\Omega^{(m)}_r(z_0)} \subset \Omega$ we have 
 \begin{equation*} 
 \begin{split}
    u(z_0) = & \frac{1}{r^{N+m}} \int_{\Omega^{(m)}_r(z_0)} \! \! \! \! \! \! M_r^{(m)} (z_0; z) u(z) \, dz \,
 	\\
 	& + \frac{N+m}{r^{N+m}} \int_0^{r} \left( \r^{N+m-1} \int_{\Omega_\r^{(m)} (z_0)} 
 	\! \! \! \! W_\r^{(m)} (z_0;z) f (z) \, dz \right) d \r \\
 	& + \frac{N+m}{r^{N+m}} \int_0^{r} \left( \frac{\omega_m}{\r} \int_{\Omega_\r^{(m)} (z_0)} \! \! \! \! N_\r^m(z_0;z)
 	 \left( \div \, b(z) - c(z) \right) u(z) \, dz \right) d \r.
 \end{split}
\end{equation*}
\end{proposition}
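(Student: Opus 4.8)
\medskip
\noindent\textbf{Proof strategy.}
The plan is to obtain the identity from the second (volume) mean value formula of Theorem~\ref{th-1}, applied to the lifted solution $\widetilde u$ of $\widetilde\L\,\widetilde u=\widetilde f$ on $\widetilde\Omega:=\Omega\times\R^m\subset\R^{N+m+1}$, and then to integrate out the auxiliary variable $y$. First I would check that $\widetilde\L=\L+\Delta_y$ satisfies the standing hypotheses of the paper in dimension $N+m$: its symmetric coefficient matrix $\widetilde A=\diag(A,I_m)$ obeys \eqref{e-up} with $\lambda,\Lambda$ replaced by $\min\{\lambda,1\},\max\{\Lambda,1\}$, its drift is $\widetilde b=(b,0)$ and its zero order coefficient is $\widetilde c=c$; since the new entries are constant in $y$, the coefficients of $\widetilde\L$ and their derivatives in the space variables $(x,y)$ inherit \eqref{e-hc} from those of $\L$. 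Hence Theorem~\ref{th-1} is available for $\widetilde\L$, and by uniqueness of the fundamental solution furnished by the parametrix method the function $\widetilde\Gamma(\widetilde z_0;\widetilde z)=\Gamma(z_0;z)\,K_m(y_0,t_0;y,t)$ recalled in the Introduction is precisely the fundamental solution of $\widetilde\L$. By translation invariance in $y$ we may take $y_0=0$ and write $\widetilde z_0=(x_0,0,t_0)$.

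The geometric heart of the argument is the structure of the super-level sets $\widetilde\Omega_\r(\widetilde z_0)=\{\widetilde z:\widetilde\Gamma(\widetilde z_0;\widetilde z)>\r^{-(N+m)}\}$. Setting $g(y,t):=(4\pi(t_0-t))^{-m/2}\exp\!\big(-|y|^2/(4(t_0-t))\big)$, so that $\widetilde\Gamma(\widetilde z_0;\widetilde z)=\Gamma(z_0;z)\,g(y,t)$, a rearrangement of the inequality $\widetilde\Gamma(\widetilde z_0;\widetilde z)>\r^{-(N+m)}$ shows that $\widetilde z=(x,y,t)$ belongs to $\widetilde\Omega_\r(\widetilde z_0)$ exactly when $(x,t)\in\Omega^{(m)}_\r(z_0)$ and $|y|<N_\r(z_0;z)$; in other words the $y$-section of $\widetilde\Omega_\r(\widetilde z_0)$ above a point $(x,t)\in\Omega^{(m)}_\r(z_0)$ is the $m$-dimensional ball of centre $0$ and radius $N_\r(z_0;z)$, and is empty otherwise. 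In particular $\widetilde\Omega_\r(\widetilde z_0)\subset\Omega^{(m)}_\r(z_0)\times\R^m\subset\widetilde\Omega$ whenever $\Omega^{(m)}_\r(z_0)\subset\Omega$; moreover $\widetilde\Omega_\r(\widetilde z_0)$ is bounded, being the parabolic ball of the uniformly parabolic operator $\widetilde\L$, and $\Omega^{(m)}_\r(z_0)\subset\Omega^{(m)}_r(z_0)$ for $\r<r$, so that all the integrals in the statement are meaningful under the hypothesis $\Omega^{(m)}_r(z_0)\subset\Omega$.

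Next I would apply the second mean value formula of Theorem~\ref{th-1} to $\widetilde u$, $\widetilde\L$, $\widetilde\Gamma$ at $\widetilde z_0$ with radius $r$, and integrate each of the three resulting terms with respect to $y$ over the sections just described, using Fubini's theorem together with the facts that $\widetilde u$, $\widetilde f$, $\div\widetilde b=\div b$, $\widetilde c=c$ do not depend on $y$ and that $\widetilde u(\widetilde z_0)=u(z_0)$. The section has $m$-volume $\omega_m N_\r^m(z_0;z)$, which turns the $(\div\widetilde b-\widetilde c)\widetilde u$ term into the third term of the claim. For the leading term one uses $\widetilde A=\diag(A,I_m)$ and $\nabla_y\widetilde\Gamma=-\tfrac{y}{2(t_0-t)}\widetilde\Gamma$ to get the pointwise identity
\[
\frac{\langle\widetilde A(\widetilde z)\nabla_{(x,y)}\widetilde\Gamma,\nabla_{(x,y)}\widetilde\Gamma\rangle}{\widetilde\Gamma^{\,2}}=M(z_0;z)+\frac{|y|^2}{4(t_0-t)^2},
\]
whence $\int_{\{|y|<N_r\}}\big(M(z_0;z)+|y|^2/(4(t_0-t)^2)\big)\,dy=M_r^{(m)}(z_0;z)$ after the elementary computation $\int_{\{|y|<R\}}|y|^2\,dy=\tfrac{m}{m+2}\omega_m R^{m+2}$. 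For the forcing term one has to evaluate $\int_{\{|y|<N_\r\}}\big(\r^{-(N+m)}-\Gamma(z_0;z)g(y,t)\big)\,dy$: the first summand gives $\omega_m N_\r^m(z_0;z)\,\r^{-(N+m)}$, while the Gaussian integral $\int_{\{|y|<N_\r\}}g(y,t)\,dy$, passed to polar coordinates and to the substitution $\tau=|y|^2/(4(t_0-t))$, becomes a dimensional constant times the lower incomplete gamma function $\widetilde\gamma\big(\tfrac m2;\tfrac{N_\r^2(z_0;z)}{4(t_0-t)}\big)$; collecting the two contributions produces exactly $W_\r^{(m)}(z_0;z)$. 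Substituting these evaluations into the lifted mean value formula yields the asserted identity.

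\medskip
I do not expect a genuine obstacle here: this is the dimension-augmentation device of Kupcov \cite{Kupcov4} used in \cite{GarofaloLanconelli-1989}, and the whole point is that the present weaker regularity of the coefficients of $\L$ transfers to $\widetilde\L$ at no cost, so that Theorem~\ref{th-1} — rather than its smooth-coefficient predecessor — can be invoked. The steps requiring a little care are the bookkeeping of the dimensional constants in $M_r^{(m)}$ and $W_\r^{(m)}$ (the precise normalisations of $\omega_m$, of $\widetilde\gamma$, and of the heat kernel $K_m$), the identification of $\widetilde\Gamma=\Gamma K_m$ with the parametrix fundamental solution of $\widetilde\L$ (and not merely \emph{a} fundamental solution), and the verification that $\widetilde\Omega_\r(\widetilde z_0)$ is bounded and contained in $\widetilde\Omega$, which is what makes Theorem~\ref{th-1} applicable to $\widetilde u$.
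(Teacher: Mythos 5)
Your proposal is correct and follows exactly the route of the paper: lift $u$ to $\widetilde u$ on $\R^{N}\times\R^m\times\R$, apply the second mean value formula of Theorem \ref{th-1} to $\widetilde\L=\L+\Delta_y$ with the product fundamental solution $\widetilde\Gamma=\Gamma\,K_m$, and integrate out $y$ over the ball-shaped sections of radius $N_\r(z_0;z)$ to produce the kernels $M_r^{(m)}$, $W_\r^{(m)}$ and the factor $\omega_m N_\r^m$. The paper states this integration in one line, whereas you carry out the Gaussian and polar-coordinate computations explicitly; the extra detail is consistent with the paper's definitions in \eqref{e-Omegam}.
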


\medskip

We conclude this introduction with two statements of the parabolic Harnack inequality. The first one is given in terms of the parabolic ball $\Omega_{r}^{(m)}(z_0)$, the second one is the usual invariant parabolic Harnack inequality. We emphasize that our proof is elementary, as it is based on the mean value formula, however some accurate estimates of the fundamental solution are needed in order to control the Harnack constant and the size of the cylinders appearing in its statement. 
For every $z_0 = (x_0,t_0) \in \R^{N+1}, r >0$, and $m \in \mathbb{N}$ we set
\begin{equation} \label{e-Kr}
	K^{(m)}_{r}(z_0) := \overline{\Omega_{r}^{(m)}(z_0)} \cap \Big\{ t \le t_0 - \frac{1}{4 \pi \lambda^{N/(N+m)}} \,r^2 \Big\}.
\end{equation}
We note that, as a consequence of Lemma \ref{lem-localestimate*} below, for every sufficiently small $r$ the compact set $K^{(m)}_{r}(z_0)$ is non empty.

\begin{proposition} \label{prop-Harnack}
For every $m \in \mathbb{N}$ with $m > 2$, there exist two positive constants $r_0$ and $C_K$, only depending on $\L$ and $m$, such that the following inequality holds. Let $\Omega$ be an open subset of $\R^{N+1}$. For every $z_0 \in \Omega$ and for every positive $r$ such that $r \le r_0$ and $\Omega_{5r}^{(m)}(z_0) \subset \Omega$ we have that
  \begin{equation} \label{e-H0}
	\sup_{K^{(m)}_{r}(z_0)} u \le C_K u(z_0)
\end{equation}
for every $u \ge 0$ solution to $\L u = 0$ in $\Omega$.
\end{proposition}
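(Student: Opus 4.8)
The plan is to deduce \eqref{e-H0} from the modified mean value formula of Proposition \ref{prop-2}, mimicking the classical derivation of the Harnack inequality from the mean value property of harmonic functions. Specialising Proposition \ref{prop-2} to a nonnegative solution of $\L u=0$, so that $f\equiv 0$, one extracts two estimates with constants depending only on $\L$ and $m$: a \emph{pointwise upper bound}
\begin{equation*}
u(\zeta)\ \le\ \frac{C^{+}}{\sigma^{N+2}}\int_{\Omega_{\sigma}^{(m)}(\zeta)}u(z)\,dz ,\qquad \sigma\le r_0,\ \ \Omega_{\sigma}^{(m)}(\zeta)\subset\Omega ,
\end{equation*}
and a \emph{reverse inequality at the centre}
\begin{equation*}
u(z_0)\ \ge\ \frac{c^{-}}{\rho^{N+2}}\int_{G_{\rho}(z_0)}u(z)\,dz ,\qquad \rho\le r_0,\ \ \overline{\Omega_{\rho}^{(m)}(z_0)}\subset\Omega ,
\end{equation*}
where $G_{\rho}(z_0)$ is a ``core'' of the parabolic ball, namely $G_{\rho}(z_0):=\Omega_{\mu\rho}^{(m)}(z_0)\cap\{t\le t_0-\delta\rho^2\}$ for fixed $\mu\in(0,1)$ and $\delta>0$, with $|G_{\rho}(z_0)|\asymp\rho^{N+2}$. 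Granting these, \eqref{e-H0} follows at once: one chooses $\rho$ and $\sigma$ comparable to $r$ so that, via Lemma \ref{lem-localestimate*}, the constant $\tfrac{1}{4\pi\lambda^{N/(N+m)}}$ in \eqref{e-Kr} forces $K^{(m)}_{r}(z_0)\subset G_{\rho}(z_0)$ and $\Omega_{\sigma}^{(m)}(\zeta)\subset G_{\rho}(z_0)$ for every $\zeta\in K^{(m)}_{r}(z_0)$, and then, using $u\ge0$,
\begin{equation*}
u(\zeta)\ \le\ \frac{C^{+}}{\sigma^{N+2}}\int_{\Omega_{\sigma}^{(m)}(\zeta)}u(z)\,dz\ \le\ \frac{C^{+}}{\sigma^{N+2}}\int_{G_{\rho}(z_0)}u(z)\,dz\ \le\ \frac{C^{+}\rho^{N+2}}{c^{-}\sigma^{N+2}}\,u(z_0)\ =:\ C_K\,u(z_0)
\end{equation*}
uniformly in $\zeta\in K^{(m)}_{r}(z_0)$, which is compact; here $C_K$ depends only on $\L$ and $m$.

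The quantitative facts about $\Gamma$ that enter are collected, at the small scales $\rho\le r_0$, from Section \ref{secFundSol} and Lemma \ref{lem-localestimate*}: the Gaussian two-sided bounds for $\Gamma$ and $\nabla\Gamma$, the ensuing two-sided inclusions of $\Omega_{\rho}^{(m)}(z_0)$ between parabolic cylinders of size $\asymp\rho$, the volume estimate $|\Omega_{\rho}^{(m)}(z_0)|\asymp\rho^{N+2}$, the bound $0\le N_{\rho}(z_0;z)\le c\,\rho$ on $\Omega_{\rho}^{(m)}(z_0)$, and --- this is where $m>2$ is used --- both $M_{\rho}^{(m)}(z_0;z)\le c\,\rho^{m-2}$ on all of $\Omega_{\rho}^{(m)}(z_0)$ and $M_{\rho}^{(m)}(z_0;z)\ge c\,\rho^{m-2}$ on $G_{\rho}(z_0)$ (for $m\le 2$ the kernel $M_{\rho}^{(m)}$ is unbounded near $z_0$, because of the factor $\tfrac{N_{\rho}^{2}}{(t_0-t)^{2}}$ and of the logarithm hidden in $N_\rho$, cf. \eqref{e-Omegam}). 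Writing Proposition \ref{prop-2} with $f\equiv0$ in the form $\rho^{N+m}u(p)=\int_{\Omega_{\rho}^{(m)}(p)}M_{\rho}^{(m)}(p;z)u(z)\,dz+R_{\rho}(p)$, the remainder satisfies $|R_{\rho}(p)|\le C_0\rho^{m}\int_{\Omega_{\rho}^{(m)}(p)}u$ by $|\div b-c|\le(N+1)\Lambda$, by $N_{\sigma}\le c\sigma$, and by the nesting $\Omega_{\sigma}^{(m)}(p)\subset\Omega_{\rho}^{(m)}(p)$ for $\sigma\le\rho$. The upper bound follows immediately: since $R_\rho$ carries the \emph{same} integral as the main term, $\rho^{N+m}u(p)\le(c\rho^{m-2}+C_0\rho^{m})\int_{\Omega_{\rho}^{(m)}(p)}u\le 2c\,\rho^{m-2}\int_{\Omega_{\rho}^{(m)}(p)}u$ as soon as $C_0\rho^{2}\le c$, which is one of the constraints that fix $r_0$.

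The reverse inequality is the delicate point. Retaining only the contribution of $G_{\rho}(z_0)$ in the main term, $\int_{\Omega_{\rho}^{(m)}(z_0)}M_{\rho}^{(m)}u\ge c\,\rho^{m-2}\int_{G_{\rho}(z_0)}u$, leaves
\begin{equation*}
u(z_0)\ \ge\ \frac{c}{\rho^{N+2}}\int_{G_{\rho}(z_0)}u(z)\,dz\ -\ \frac{C_0}{\rho^{N}}\int_{\Omega_{\rho}^{(m)}(z_0)}u(z)\,dz ,
\end{equation*}
and now the remainder is spread over the \emph{larger} ball $\Omega_{\rho}^{(m)}(z_0)$, so, unlike in the upper bound, it is not absorbed by the constraint $C_0\rho^2\le c$ alone. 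I would remove it by iterating this inequality along a geometric sequence of radii $\rho_k=\theta^{k}\rho$, a De Giorgi-Nash-Moser type absorption: the remainder carries an extra factor $\rho^{2}$, the core $G_{\rho}(z_0)$ always retains a fixed proportion of the mass of $u$ on $\Omega_{\theta\rho}^{(m)}(z_0)$, and summing the geometrically decaying errors gives the displayed reverse inequality (after relabelling the radius). The smallness $\rho\le r_0$ is used throughout this step.

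I expect two obstacles. The first is precisely this scale-uniform absorption of the lower-order term $(\div b-c)u$ in the reverse inequality: the naive estimate is not enough, and the iteration --- together with $r\le r_0$ --- is essential; it is also the only place where one might fear needing extra smoothness of the coefficients, but no such regularity enters, only the Gaussian bounds on $\Gamma$ and $\nabla\Gamma$ recalled in Section \ref{secFundSol}. The second, more computational, obstacle is the geometric matching of the first paragraph: verifying, from the two-sided Gaussian estimates, that the specific constant $\tfrac{1}{4\pi\lambda^{N/(N+m)}}$ in \eqref{e-Kr} makes $K^{(m)}_{r}(z_0)$ sit strictly inside the core $G_{\rho}(z_0)$ with enough room for each small ball $\Omega_{\sigma}^{(m)}(\zeta)$, $\zeta\in K^{(m)}_{r}(z_0)$, to lie inside $G_{\rho}(z_0)$ as well, and that all the sets occurring stay inside $\Omega$ --- which is exactly what the hypothesis $\Omega_{5r}^{(m)}(z_0)\subset\Omega$ guarantees, the factor $5$ accommodating the ratio between the various radii plus slack. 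Should the constants fail to match in a single step, a short Harnack chain with a number of links bounded in terms of $\L$ and $m$ would replace the one-step comparison, without any change in the structure of the argument.
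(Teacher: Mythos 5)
Your overall architecture coincides with the paper's proof in the case $\div\,b-c=0$: an upper bound for the kernel $M^{(m)}_{\vartheta r}(z;\cdot)$ on $\Omega^{(m)}_{\vartheta r}(z)$ (where $m>2$ enters exactly as you say), a lower bound for $M^{(m)}_{5r}(z_0;\cdot)$ on the ``core'' $\Omega^{(m)}_{4r}(z_0)\cap\{\tau\le t_0-\tfrac{r^2}{4\pi\lambda^{N/(N+m)}}\}$, and the geometric inclusions $\Omega^{(m)}_{\vartheta r}(z)\subset(\text{core})$ for $z\in K^{(m)}_r(z_0)$, obtained via Lemma \ref{lem-localestimate*}, parabolic scaling and a compactness argument; chaining these through $\int u$ over the core gives $C_K=5^{N+m}M^+/(\vartheta^{N+m}m^-)$, exactly your one-step comparison. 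Your treatment of the upper bound with the lower-order term present is also fine, since there the remainder is majorized by the same integral as the main term and $u\ge0$.

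The genuine gap is where you yourself locate the difficulty: the absorption of the term $\frac{N+m}{r^{N+m}}\int_0^r\frac{\omega_m}{\varrho}\int_{\Omega^{(m)}_\varrho(z_0)}N_\varrho^m(\div\,b-c)u$ in the \emph{reverse} inequality at $z_0$. Your proposed iteration rests on the assertion that ``the core $G_\rho(z_0)$ always retains a fixed proportion of the mass of $u$ on $\Omega^{(m)}_{\theta\rho}(z_0)$''. This is not a consequence of the Gaussian bounds on $\Gamma$: it is a doubling property of the measure $u\,dz$ for nonnegative solutions, i.e.\ essentially the Harnack inequality you are trying to prove, so as written the iteration is circular. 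A nonnegative caloric $u$ may a priori concentrate its mass in the pinch region $\Omega^{(m)}_\rho(z_0)\cap\{t>t_0-\delta\rho^2\}$, where the kernel lower bound fails, and no geometric decay of the errors is available without already knowing that this cannot happen. The paper removes the obstacle by a different and much simpler device: setting $k:=(N+1)\Lambda\ge|\div\,b-c|$ and passing to $\widehat u:=e^{k(t-t_0)}u$ and $\widetilde u:=e^{-k(t-t_0)}u$, which solve $\L\widehat u+k\widehat u=0$ and $\L\widetilde u-k\widetilde u=0$. For these operators the coefficient $\div\,b-c\mp k$ has a definite sign, so the lower-order term in Proposition \ref{prop-2} can simply be \emph{dropped} with the correct inequality direction (downwards for $\widehat u$ in the upper bound, upwards for $\widetilde u$ in the reverse bound), and one returns to $u$ via $\widehat c\,u\le\widehat u\le u\le\widetilde u\le\widetilde c\,u$ on $\Omega^{(m)}_{5r}(z_0)$ together with $\widehat u(z_0)=\widetilde u(z_0)=u(z_0)$. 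You should replace your iteration by this change of unknown; the rest of your argument then goes through as in the paper.
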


We introduce some further notation in order to state an \emph{invariant} Harnack inequality. For every $z_0 = (x_0,t_0) \in \R^{N+1}$ and for every $r>0$ we set
\begin{equation} \label{e-Q}
	\Q_{r}(z_0) := B_r(x_0) \times ]t_0- r^2,t_0[,
\end{equation}
where $B_r(x_0)$ denotes the Euclidean ball with center at $x_0$ and radius $r$. Moreover, for $0 < \iota < \kappa < \mu < 1$ and $0 < \vartheta < 1$ we set
\begin{equation} \label{e-QPM}
	\Q^-_{r}(z_0) := B_{\vartheta r}(x_0) \times ]t_0 - \kappa r^2,t_0 - \mu r^2[, \qquad
	\Q^+_{r}(z_0) := B_{\vartheta r}(x_0) \times ]t_0 - \iota r^2,t_0[.
\end{equation}
We have

\begin{theorem} \label{th-Harnack-inv}
Choose positive constants $R_0$ and $\iota, \kappa, \mu, \vartheta$ as above and let $\Omega$ be an open subset of $\R^{N+1}$. Then there exists a positive constant $C_H$, only depending on $\L$, on $R_0$ and on the constant that define the cylinders $\Q, \Q^+, \Q^-$, such that the following inequality holds. For every $z_0 \in \Omega$ and for every positive $r$ such that $r \le R_0$ and ${\Q_{r}(z_0)} \subset \Omega$ we have that
  \begin{equation} \label{e-H1}
	\sup_{\Q^-_{r}(z_0)} u \le C_H \inf_{\Q^+_{r}(z_0)} u
\end{equation}
for every $u \ge 0$ solution to $\L u = 0$ in $\Omega$.
\end{theorem}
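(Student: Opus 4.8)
\medskip
\noindent\emph{Sketch of the argument.}
The plan is to derive Theorem~\ref{th-Harnack-inv} from the parabolic-ball Harnack inequality of Proposition~\ref{prop-Harnack} by a finite Harnack-chain argument. I would fix once and for all an integer $m>2$, so that Proposition~\ref{prop-Harnack} supplies constants $r_0>0$ and $C_K\ge1$ depending only on $\L$. The crucial preliminary step is a two-sided comparison between the super-level sets in \eqref{e-Omegam}--\eqref{e-Kr} and Euclidean cylinders: using the two-sided Gaussian bounds for $\Gamma$ recorded in Lemma~\ref{lem-localestimate*}, together with elementary estimates on the factor $(4\pi(t_0-t))^{-m/2}$ occurring in \eqref{e-Omegam}, one obtains constants $a_1>0$, $0<b_1<b_2<1$, $L>1$ and $\bar r>0$, depending only on $\L$ and $m$, such that for every $z_0=(x_0,t_0)$ and every $0<s\le\bar r$
\[
  B_{a_1 s}(x_0)\times\,]\,t_0-b_2 s^2,\ t_0-b_1 s^2\,[\ \subset\ K^{(m)}_{s}(z_0)
  \qquad\text{and}\qquad
  \overline{\Omega^{(m)}_{s}(z_0)}\ \subset\ B_{L s}(x_0)\times\,]\,t_0-L s^2,\ t_0\,].
\]
The first inclusion is the quantitative version of the nonemptiness remark made after \eqref{e-Kr}; the second follows from the Gaussian upper bound for $\Gamma$.

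\medskip
\noindent Next, fix $z_0\in\R^{N+1}$, a radius $r\le R_0$ with $\Q_r(z_0)\subset\Omega$, and points $\bar z=(\bar x,\bar t)\in\Q^+_r(z_0)$, $\underline z=(\underline x,\underline t)\in\Q^-_r(z_0)$. From \eqref{e-Q}--\eqref{e-QPM} and the ordering $\iota<\kappa<\mu$ one checks that $\bar t>\underline t$, that $\bar t-\underline t$ lies between two fixed positive multiples of $r^2$, and that the segment $[\bar x,\underline x]$ is contained in $B_{\vartheta r}(x_0)$ by convexity. I would choose a small parameter $\delta=\delta(\L,m,\iota,\kappa,\mu,\vartheta,R_0)\in(0,1)$, to be fixed at the end, put $\rho:=\delta r$, and construct a chain $w_0=\bar z,\ w_1,\dots,w_n=\underline z$ with $x_j\in[\bar x,\underline x]$, $|x_{j+1}-x_j|<a_1\rho$ and $t_j-t_{j+1}\in[b_1\rho^2,b_2\rho^2]$; by the first inclusion above, these conditions force $w_{j+1}\in K^{(m)}_{\rho}(w_j)$ for every $j$. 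Since $b_1<b_2$ and $\bar t-\underline t$ is comparable to $r^2$, for $\delta$ small the time gap can be split into $n$ admissible decrements with $n\le n_0$ for a suitable $n_0=n_0(\L,m,\iota,\kappa,\mu)$, and with so many steps available the displacement $|\bar x-\underline x|<2\vartheta r$ can also be realised along $[\bar x,\underline x]$ in increments smaller than $a_1\rho$. The essential point is that $n\le n_0$ is independent of $r$, $z_0$, $\bar z$, $\underline z$ and $u$.

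\medskip
\noindent For each $j$, Proposition~\ref{prop-Harnack} with centre $w_j$ and radius $\rho$ yields $\sup_{K^{(m)}_{\rho}(w_j)}u\le C_K\,u(w_j)$, hence $u(w_{j+1})\le C_K\,u(w_j)$, provided its hypotheses $\rho\le r_0$ and $\Omega^{(m)}_{5\rho}(w_j)\subset\Omega$ hold; iterating along the chain and using $C_K\ge1$,
\[
  u(\underline z)=u(w_n)\ \le\ C_K\,u(w_{n-1})\ \le\ \cdots\ \le\ C_K^{\,n}\,u(\bar z)\ \le\ C_K^{\,n_0}\,u(\bar z).
\]
To secure those hypotheses one fixes $\delta$: every $w_j$ lies in $\Q_r(z_0)$, and since $x_j\in B_{\vartheta r}(x_0)$ with $\vartheta<1$ and the times $t_j$ stay in $]\,t_0-\beta_0 r^2,\,t_0\,[$ for a fixed $\beta_0<1$ (depending on $\kappa,\mu$), the second inclusion above gives $\Omega^{(m)}_{5\rho}(w_j)\subset B_{(\vartheta+5L\delta)r}(x_0)\times\,]\,t_0-(\beta_0+25L\delta^2)r^2,\ t_0\,[$; choosing $\delta$ so small that $\vartheta+5L\delta<1$, $\beta_0+25L\delta^2<1$ and $5\delta R_0\le\min\{r_0,\bar r\}$, this set sits inside $\Q_r(z_0)\subset\Omega$ and every application of Proposition~\ref{prop-Harnack} is licit. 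Taking the supremum over $\underline z\in\Q^-_r(z_0)$ and the infimum over $\bar z\in\Q^+_r(z_0)$ yields \eqref{e-H1} with $C_H:=C_K^{\,n_0}$, which depends only on $\L$, on $R_0$ and on $\iota,\kappa,\mu,\vartheta$.

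\medskip
\noindent The step I expect to be the main obstacle is the two-sided geometric comparison of the first paragraph: it relies on quantitative Gaussian lower \emph{and} upper bounds for the fundamental solution of the \emph{variable}-coefficient operator $\L$, and on carefully carrying along the auxiliary $m$-dimensional heat factor $K_m$ hidden in \eqref{e-Omegam}. The chaining itself is routine; its only delicate aspect is to keep the whole chain, together with the dilated sets $\Omega^{(m)}_{5\rho}(w_j)$ on which Proposition~\ref{prop-Harnack} is invoked, inside the \emph{single} cylinder $\Q_r(z_0)\subset\Omega$ while keeping the number of links $n_0$ uniformly bounded — which is precisely what forces $C_H$ to depend on $R_0$ and on the ratios $\iota,\kappa,\mu,\vartheta$.
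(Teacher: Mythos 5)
Your proposal is correct and follows essentially the same route as the paper: a Harnack chain built from the parabolic-ball inequality of Proposition \ref{prop-Harnack}, with the key geometric input being the comparison (via Lemma \ref{lem-localestimate*}) of the sets $K^{(m)}_{s}$ and $\Omega^{(m)}_{s}$ with Euclidean cylinders --- exactly what the paper packages as Corollary \ref{cor-Harnack-inv}. The only differences are cosmetic: the paper first rescales to $r=1$ and chooses the step length by distinguishing two cases according to the ratio $|x^+-x^-|/(t^+-t^-)$, whereas you keep the scale and absorb both cases into a single sufficiently small parameter $\delta$.
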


\begin{center}
\begin{tikzpicture}
\path[draw,thick] (-1,.7) rectangle (9,6.8);
\filldraw [fill=blue!20!white, draw=blue, line width=.6pt] (1.5,6) rectangle node {{\color{blue} $Q^+_r(x_0,t_0)$}} node [below=.7cm,right=1.95cm] {{\color{blue} $t_0 - \iota r^2$}} (5.5,4.5);
\filldraw [fill=red!20!white, draw=red, line width=.6pt] (1.5,2) rectangle node {{\color{red} $Q^-_r(x_0,t_0)$}} node [above=.7cm,right=1.95cm] {{\color{red} $t_0 - \kappa r^2$}} node [below=.7cm,right=1.95cm] {{\color{red} $t_0 - \mu r^2$}} (5.5,3.5);
\draw [line width=.6pt] (0,6) rectangle node [above=2cm,right=3.6cm] {$Q_r(x_0,t_0)$} node [below=2.2cm,right=3.6cm] {$t_0-r^2$} (7,1);
\draw [line width=.6pt] (3.5,6) circle (.6pt) node[above] {$(x_0,t_0)$} node[above= 7pt, right=1.2cm] {{\color{blue}$\vartheta r$}} node[above=7pt, right=2.8cm] {$r$};
\end{tikzpicture} 

{\sc \qquad Fig.2}  - The set $Q_r(x_0,t_0)$.
\end{center}
 
We conclude this introduction with some comments about our main results. Mean value formulas don't require the uniqueness of the fundamental solution $\Gamma$. In Section \ref{secFundSol} we recall the main results we need on the existence of a fundamental solution together with some known facts about its uniqueness. We also recall in Proposition \ref{prop-localestimate} an asymptotic bound of $\Gamma$ which allows us to use a direct procedure in a part of the proof of the mean value formulas stated in Theorem \ref{th-1}. We point out that recent progresses on mean value formulas and their applications can be found e.g. in \cite{CMPCS}. Moreover, an alternative and more general approach has been introduced by Cupini and Lanconelli in \cite{CupiniLanconelli2021}, where a wide family of differential operators with smooth coefficients is considered. We continue the outline of this article. Section \ref{SectionDivergence} contains the statement of a generalized divergence theorem for {sets with almost $C^1$ boundary} that is used in Section \ref{sectionProof} for the proof of the mean values formulas. Section \ref{sectionHarnack} is devoted to the proof of the Harnack inequality. In the last section we present an alternative approach for the mean value formula that relies on geometric measure theory.

We finally remark that our method also applies to uniformly elliptic equations. Moreover, mean value formulas and Harnack inequality are fundamental tools in the development of the Potential Theory for the operator $\L$. 

\setcounter{equation}{0} 
\section{Fundamental solution}\label{secFundSol}

In this Section we recall some notations and some known results on the classical theory of uniformly parabolic equations that will be used in the sequel. Points of $\R^{N+1}$ are denoted by $z=(x,t), \z = (\x,\t)$ and $\Omega$ denotes an open subset of $\R^{N+1}$. 

Let $u$ be a real valued function defined on $\Omega$. We say that $u$ belongs to $C^{2,1}(\O)$ if 
$u, \frac{\p u}{\p x_j}, \frac{\p^2 u}{\p x_i\p x_j}$ for $i,j= 1, \dots, N$ and $\frac{\p u}{\p t}$ 
are continuous functions, it belongs to $C^{2+ \alpha,1 + \alpha/2}(\O)$ if $u$ and  all the derivatives 
of $u$ listed above belong to the space $C^{\alpha}(\O)$ of the H\"older continuous functions defined by 
\eqref{e-hc}. A function $u$ belongs to $C^\alpha_\loc(\Omega)$ ($C^{2+ \alpha,1 + \alpha/2}_\loc(\O)$,
respectively) if it belongs to $C^\alpha(K)$ (resp. $C^{2+ \alpha,1 + \alpha/2}(K)$) for every compact set 
$K \subset \O$. Let $f$ be a continuous function defined on $\O$. We say that $u \in C^{2,1}(\O)$ is 
a classical solution to $\L u = f$ in $\Omega$ if the equation \eqref{e-L} is satisfied at every point 
$z \in \O$. 

According to Friedman \cite{Friedman}, we say that a fundamental solution $\G$ for the operator $\L$ is a function $\G = \G(z; \z)$ defined for every $(z; \z) \in \R^{N+1} \times \R^{N+1}$ with $t> \tau$, which satisfies the following contitions:
\begin{enumerate}
	 	\item For every $\z = (\xi, \t) \in \R^{N+1}$ the function $\G( \, \cdot \, ; \z)$ belongs to 
	 	$C^{2,1}(\R^{N} \times ]\t, + \infty[)$ and is a classical solution to 
	 	$\L \, \G(\cdot \,; \z) = 0$ in $\R^{N} \times ]\t, + \infty[$;
		\item for every $\phi \in C_c(\R^N)$ the function
$$
  u(z)=\int_{\R^{N}}\Gamma(z;\xi, \t)\phi(\x)d \x, 
$$
is a classical solution to the Cauchy problem
\begin{equation*} 
\left\{
  \begin{array}{ll}
    \L u = 0, & \hbox{$z \in \R^{N} \times ]\t, + \infty[$} \\
    u( \cdot,\t) = \varphi & \hbox{in} \ \R^N.
  \end{array}
\right.
\end{equation*}
\end{enumerate}
Note that $u$ is defined for $t>\t$, then the above identity is understood as follows: for every $\x \in \R^N$ we have $\lim_{(x,t) \to (\x,\t)}u(x,t) = \varphi (\x)$. We also point out that the two above conditions do not guarantee the uniqueness of the fundamental solution. However, as we shall see in the following, estimates \eqref{upper-bound} and \eqref{e-deriv-bds} hold for the fundamental solution $\Gamma$ built by the parametrix method and the fundamental solution verifying such estimates is unique. Indeed, it follows from the proof of Theorem 15 in Ch.1 of \cite{Friedman} that there is only one fundamental solution under the further assumptions that $\Gamma(x,t;\x,\t) \to 0$ as $|x| \to + \infty$ and $|\partial_{x_j}\Gamma(x,t;\x,\t)| \to 0$ as $|x| \to + \infty$, for $j=1, \dots,N$, uniformly with respect to $t$ varying in bounded intervals of the form $]\tau, \tau + T]$.

We outline here the parametrix method for the construction of a fundamental solution $\G$ of $\L$. 
We first note that, if the matrix $A$ in the operator $\L$ is constant, then the fundamental solution of 
$\L$ is explicitly known
\begin{equation} \label{eq-fundsol-A}
 \G_A(z;\z) = \frac{1}{\sqrt{(4 \pi (t-\t))^{N}\det A}} 
 \exp \left( - \frac{\langle A^{-1}(x-\x),x-\x \rangle}{4(t-\t)} \right),
\end{equation}
and moreover the \emph{reproduction property} holds:
\begin{equation} \label{eq-rep-prop-A}
 \G_A(z;\z) = \int_{\R^N} \G_A(x,t;y,s) \G_A(y,s;\x,\t) d y,
\end{equation}
for every $z=(x,t), \z=(\x,\t) \in \R^{N+1}$ and $s \in \R$ with $\t < s < t$. 
A direct computation shows that, for every $T>0$, and $\Lambda^+ > \Lambda$ as in \eqref{e-up}, 
there exists a positive constant $C^+= C^+(\lambda, \Lambda, \Lambda^+,T)$ such that 
\begin{equation} \label{eq-fundsol-bd-2}
\begin{split}
 \left| \frac{\partial \G_A}{\partial {x_j}}(z;\z) \right|  \le \frac{C^+}{\sqrt{t-\t}} \G^+(z;\z), 
 \qquad
 \left| \frac{\partial^2 \G_A}{\partial{x_i x_j}}(z;\z) \right|  \le \frac{C^+}{t-\t} \G^+(z;\z)
\end{split}
\end{equation}
for any $i,j= 1, \dots, N$ and for every $z, \z \in \R^{N+1}$ such that $ 0 < t - \t \le T$. 
Here the function
\begin{equation} \label{eq-fundsol+}
 \G^+(z;\z) = \frac{1}{(\Lambda^+4\pi (t-\t))^{N/2}} \exp \left( - \frac{|x- \x|^2}{4 \Lambda^+(t-\t)} \right),
\end{equation}
is the fundamental solution of $\Lambda^+ \Delta - \frac{\partial}{\partial t}$.
The \emph{parametrix} $Z$ for $\L$ is defined as
\begin{equation} \label{eq-fundsol-Z}
 Z(z;\z) := \G_{A(\z)}(z;\z) = 
 \frac{1}{\sqrt{(4 \pi (t-\t))^{N}\det A(\z)}} \exp \left( - 
 \frac{\langle A(\z)^{-1}(x-\x),x-\x \rangle}{4(t-\t)} \right).
\end{equation}
More specifically, for every fixed $(\x,\tau) \in \R^{N+1}, Z( \, \cdot \, ; \x,\t)$ is the fundamental solution of the operator $\L_{\z}$ obtained by \emph{freezing} the coefficients $a_{ij}$'s of the operator $\L$ at the point $\z$:
\begin{equation} \label{e-L0}
	\L_{\z}  := \div \left( A (\z) \nabla_x \right) - \, \tfrac{\p }{\p t}.
\end{equation}
Note that 
\begin{equation} \label{eq-LZ}
 \L Z(z;\z) := \div \left[\left( A(z) - A(\z) \right) \nabla_x Z(z;\z) \right],
\end{equation}
which vanishes as $z \to \z$, by the continuity of the matrix $A$. The fundamental solution $\G$ for $\L$ is obtained from $Z$ by an iterative procedure. We define the sequence of functions 
$(\L Z)_1 (z;\z) := \L Z(z;\z)$,
\begin{equation} \label{eq-LkZ}
 (\L Z)_{k+1}(z;\z) := \int_\t^t \bigg( \int_{\R^N}(\L Z)_{k} (x,t;y,s) \L Z(y,s;\x,\t) d y \bigg) d s, 
 \qquad k \in \mathbb{N}.
\end{equation}
Note that estimates \eqref{eq-fundsol-bd-2} also apply to $Z$ then, by using the H\"older continuity of the coefficients of $\L$, we obtain
\begin{equation*} 
 \left|\L Z(z;\z) \right| \le \frac{\widetilde C}{(t-\t)^{1 - \alpha/2}} \G^+(z;\z),
\end{equation*}
for a positive constant $\widetilde C$ depending on $\lambda, \Lambda, \Lambda^+, T$ and on the constant $M$ in \eqref{e-hc}. 
This inequality and the reproduction property \eqref{eq-rep-prop-A} applied to $\Gamma^+$ imply that, for every $k \ge 2$, the integral that defines $(\L Z)_{k}$ converges  and 
\begin{equation*} 
 \left| (\L Z)_{k}(z;\z) \right| \le \frac{ (\Gamma_E(\alpha/2) \widetilde C)^k }{\Gamma_E(\alpha k /2)(t-\t)^{1 - k\alpha/2}} 
 \G^+(z;\z),  \qquad k \in \mathbb{N},
\end{equation*}
were $\Gamma_E$ denotes the Euler's Gamma function. Theorem 8 in \cite[Chapter 1]{Friedman} states that, under the assumption that the coefficients $a_{ij}, \frac{\p a_{ij}}{\p x_i}, b_{i}, \frac{\p b_{i}}{\p x_i}$, for $i, j = 1, \dots, N$ and $c$ belong to the space $C^\alpha(\R^N \times ]T_0, T_1[)$ with $T_0 < T_1$ and satisfy \eqref{e-up}, the series  
\begin{equation} \label{eq-Gamma}
 \G (z;\z) := Z(z;\z) + \sum_{k=1}^{\infty} \int_\t^t \bigg( \int_{\R^N}Z(x,t;y,s)  (\L Z)_k (y,s;\x,\t) d y \bigg) d s 
\end{equation}
converges in $\R^N \times ]T_0, T_1[$ and it turns out that its sum $\G$ is a fundamental solution for $\L$. We next list some properties of the function $\G$ defined in \eqref{eq-Gamma}. We mainly refer to Chapter I  in the monograph \cite{Friedman} by Friedman. 
\begin{enumerate} 
\item Theorem 8 in \cite{Friedman}: for every $\z \in \R^{N+1}$ the function $\G(\cdot \, ; \z)$ belongs to $C^{2,1}(\R^N \times ]\t, + \infty[)$ and it is a classical solution to 
$\L \, \G = 0$ in $\R^N \times ]\t, + \infty[$.
\item Theorem 9 in \cite{Friedman}: for every bounded functions $\phi \in C(\R^N)$ and 
$f \in C^\alpha(\R^N \times ]\t, T_1[)$, with $T_0 < \t < T_1$, the function 
$$
  u(z)=\int_{\R^{N}}\Gamma(z;\z)\phi(\x)d \x - \int_\t^t \bigg( \int_{\R^{N}}\Gamma(x,t;\x, s) f(\x, s)d \x \bigg) d s 
$$
is a classical solution to the Cauchy problem
\begin{equation} \label{cauchyproblem}
\left\{
  \begin{array}{ll}
    \L u = f, & \hbox{$z \in \R^{N} \times ]\t, + \infty[$} \\
    u( \cdot,\t) = \varphi & \hbox{in} \ \R^N.
  \end{array}
\right.
\end{equation}
\item Theorem 15 in \cite{Friedman}: The function $\G^*(z;\z) := \G(\z;z)$ is the fundamental solution of the transposed operator $\L^*$ acting on a suitably smooth function $v$ as follows
\begin{equation} \label{e-L*}
    \L^* v (z)  := \div \left( A (z) \nabla_x v(z) \right) -  \langle b(z), \nabla_x v(z)\rangle + 
	(c(z) - \div \, b(z)) v(z) + \, \tfrac{\p u}{\p t}(z).
\end{equation}
\item Inequalities (6.10) and (6.11) in \cite{Friedman}: for every positive $T$ and $\Lambda^+ > \Lambda$ there exists a positive constant $C^{+}$ such that 
\begin{equation} \label{upper-bound}
\G(z; \z) \le C^{+} \, \G^{+} (z; \z),
\end{equation}
for every $z = (x,t), \z = (\x, \t) \in \R^{N+1}$ with $0 < t- \t < T$. 
Moreover, the following bounds for the derivatives hold
\begin{equation} \label{e-deriv-bds}
\begin{split}
\left| \frac{\partial \G}{\partial {x_j}}(z;\z) \right| & \le \frac{C^+}{\sqrt{t-\t}} \G^+(z;\z), \quad
 \left| \frac{\partial^2 \G}{\partial{x_i x_j}}(z;\z) \right|  \le \frac{C^+}{t-\t} \G^+(z;\z), 
 \\
 \left| \frac{\partial \G}{\partial {\x_j}}(z;\z) \right| & \le \frac{C^+}{\sqrt{t-\t}} \G^+(z;\z), \quad
 \left| \frac{\partial^2 \G}{\partial{\x_i \x_j}}(z;\z) \right|  \le \frac{C^+}{t-\t} \G^+(z;\z),
\end{split}
\end{equation}
for any $i,j=1, \dots, N$ and for every $z, \z \in \R^{N+1}$ with $0 < t- \t < T$.
\end{enumerate}
We recall that the monograph \cite{Friedman} also contains an existence and uniqueness result for the Cauchy problem under the assumptions that the functions $\varphi$ and $f$ in the Cauchy problem \eqref{cauchyproblem} do satisfy the following growth condition:
\begin{equation*} 
 | \varphi(x)| + |f(z)| \le C_0 \exp \left( h |x|^2 \right) \qquad \text{for every} \ x\in \R^N \ \text{and} \ t \in ]\tau, T_1],
\end{equation*}
for some positive constants $C_0$ and $h$. The reproduction property \eqref{eq-rep-prop-A} for $\G$ holds as a direct consequence of the uniqueness of the solution to the Cauchy problem. We also have 
\begin{equation*}
 			e^{-\Lambda(t-\t)} \le \int_{\R^{N}} \G(x,t;\x, \t) \; d \x \le e^{\Lambda(t-\t)}
\end{equation*} 
for every $(x, t), (\x, \t) \in \R^{N+1}$ with $\t < t$, where $\Lambda$ is the constant introduced in \eqref{e-up}. 


We conclude this section by quoting a statement on the asymptotic behavior of fundamental solutions, which in the stochastic theory is referred to as \emph{large deviation principle}. In our setting it is useful in the description of the \emph{parabolic ball} $\Omega_r(z_0)$ introduced in \eqref{e-Psi}. The first large deviation theorem is due to Varhadhan \cite{Varadhan1967behavior, Varadhan1967diffusion}, who considers parabolic operators $\L$ whose coefficients only depend on $x$ and are H\"older continuous. It states that
\begin{equation}
  4 (t-\tau) \log (\Gamma(x,t;\xi,\tau)) \longrightarrow - d^2(x,\xi) \quad \text{as} \ t \to \tau,
\end{equation}
uniformly with respect to $x,\xi$ varying on compact sets. Here $d(x,\xi)$ denotes the Riemannian distance (induced by the matrix $A$) of $x$ and $\xi$. Several extensions of the large deviation principle are available in literature, under different assumption on the regularity of the coefficients of $\L$. Azencott considers in \cite{Azencott84} operators with smooth coefficients and proves more accurate estimates for the asymptotic behavior of $\log\big(\Gamma(x,t;\x,\t)\big)$. Garofalo and Lanconelli prove an analogous result by using purely PDEs methods in \cite{GarofaloLanconelli-1989}. We recall here a version of this result which is suitable for our purposes.

\begin{proposition} \label{prop-localestimate}
{\sc [Theorem 1.2 in \cite{Polidoro2}]} \  For every $\eta \in ]0,1[$ there exists $C_\eta>0$ such that
\begin{equation} \label{eq:approximation}
 (1 - \eta) Z(z;\z) \le  \G(z;\z) \le  (1 + \eta) Z(z;\z)
\end{equation}
for every $z, \z \in \R^{N+1}$ such that $Z(z;\z) > C_\eta $.                                                                                               
\end{proposition}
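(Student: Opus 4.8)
I would read the estimate off the parametrix series \eqref{eq-Gamma}, which can be written $\G=Z+Z*\Phi$, where $\Phi:=\sum_{k\ge1}(\L Z)_k$ and $*$ denotes space--time convolution on $\R^N\times\,]\tau,t[$. Since the statement concerns only the super-level set $\{z:Z(z;\z)>C_\eta\}$ (on which, in particular, $t>\tau$), it suffices to show that the relative error
\[
  \frac{|\G(z;\z)-Z(z;\z)|}{Z(z;\z)}\ \longrightarrow\ 0 \qquad\text{as }\ C_\eta\to+\infty,
\]
uniformly over $\{Z(z;\z)>C_\eta\}$ and over $\z\in\R^{N+1}$, and then to pick $C_\eta$ so large that this ratio does not exceed $\eta$, whence $(1-\eta)Z\le\G\le(1+\eta)Z$ there.

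The decisive step is to bound the remainder $\G-Z$ by a Gaussian \emph{adapted to the frozen matrix} $A(\z)$, rather than by the isotropic kernel $\G^{+}$. Starting from $\L Z(z;\z)=\div[(A(z)-A(\z))\nabla_x Z(z;\z)]$ (see \eqref{eq-LZ}), the H\"older bound $|A(z)-A(\z)|\le M(|x-\xi|^{\alpha}+|t-\tau|^{\alpha/2})$, the derivative estimates for $Z=\G_{A(\z)}$, and the elementary inequality $s^{\alpha}e^{-cs^{2}}\le C_{\e}\,e^{-cs^{2}/(1+\e)}$ — which absorbs the factor $|x-\xi|^{\alpha}$ into a mild dilation of the Gaussian — should give, for every $\e>0$,
\[
  |\L Z(z;\z)|\ \le\ \frac{C_{\e}}{(t-\tau)^{1-\alpha/2}}\,\G_{(1+\e)A(\z)}(z;\z),\qquad 0<t-\tau\le 1 ,
\]
where $\G_{(1+\e)A(\z)}$ is the Gaussian \eqref{eq-fundsol-A} with matrix $(1+\e)A(\z)$. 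Iterating this in \eqref{eq-LkZ} — here one has to track the variation of the frozen matrix between the poles of two consecutive factors, which is the routine but delicate ``slowly varying coefficients'' bookkeeping of Levi's method and costs a further harmless enlargement of $\e$ — and summing the resulting Mittag-Leffler-type series yields $|\Phi(z;\z)|\le C_{\e}(t-\tau)^{\alpha/2-1}\G_{(1+\e)A(\z)}(z;\z)$; convolving once more with $Z$ and using $\int_{\tau}^{t}(s-\tau)^{\alpha/2-1}\,ds=\tfrac{2}{\alpha}(t-\tau)^{\alpha/2}$ then gives
\[
  |\G(z;\z)-Z(z;\z)|\ \le\ C_{\e}\,(t-\tau)^{\alpha/2}\,\G_{(1+\e)A(\z)}(z;\z),\qquad 0<t-\tau\le 1 .
\]

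It remains to compare the two kernels on the super-level set. Directly from \eqref{eq-fundsol-A} and \eqref{eq-fundsol-Z},
\[
  \frac{\G_{(1+\e)A(\z)}(z;\z)}{Z(z;\z)}=(1+\e)^{-N/2}\exp\!\Big(\tfrac{\e}{1+\e}\cdot\tfrac{\langle A(\z)^{-1}(x-\xi),x-\xi\rangle}{4(t-\tau)}\Big),
\]
whereas $Z(z;\z)>C_\eta$, after taking logarithms in \eqref{eq-fundsol-Z}, forces simultaneously
\[
  t-\tau<\delta_0:=\tfrac{1}{4\pi}\big(C_\eta^{2}\det A(\z)\big)^{-1/N}\le\tfrac{1}{4\pi\lambda}\,C_\eta^{-2/N}
  \quad\text{and}\quad
  \tfrac{\langle A(\z)^{-1}(x-\xi),x-\xi\rangle}{4(t-\tau)}<\tfrac{N}{2}\log\tfrac{\delta_0}{t-\tau}.
\]
Combining the last three displays gives $|\G(z;\z)-Z(z;\z)|/Z(z;\z)\le C_{\e}\,\delta_0^{\,q}(t-\tau)^{\alpha/2-q}$ with $q:=\tfrac{\e N}{2(1+\e)}$; fixing $\e$ once and for all so small that $q<\alpha/2$, and using $0<t-\tau<\delta_0$, this is bounded by $C_{\e}\,\delta_0^{\,\alpha/2}\le C'_{\e}\,C_\eta^{-\alpha/N}$, which tends to $0$ as $C_\eta\to+\infty$. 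Choosing $C_\eta$ accordingly completes the argument.

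The main obstacle is the second step. The brute-force bound $|\G-Z|\le C(t-\tau)^{\alpha/2}\G^{+}$, obtained by majorising every factor by $\G^{+}$, is \emph{not} sufficient: on $\{Z>C_\eta\}$ the quantity $|x-\xi|^{2}/(t-\tau)$ is only controlled by a constant multiple of $\log(1/(t-\tau))$, so any Gaussian majorant of $\G-Z$ whose exponential rate is strictly worse than that of $Z$ itself would, after division by $Z$, reintroduce an uncontrolled negative power of $t-\tau$. One genuinely needs the remainder to be dominated by a Gaussian with, up to the factor $1+\e$, the \emph{same} rate as $Z$ — and it is precisely the $|x-\xi|^{\alpha}$ gain coming from the H\"older continuity of $A$ that makes this possible.
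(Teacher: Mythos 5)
The paper does not actually prove this proposition: it is quoted verbatim as Theorem 1.2 of \cite{Polidoro2}, so there is no internal proof to compare against. Your reconstruction follows the route that such a proof must take (sharp parametrix estimates), and two of its three steps are complete and correct: the reduction of the statement to showing $|\G-Z|/Z\to 0$ uniformly on $\{Z>C_\eta\}$ as $C_\eta\to+\infty$, and the final computation showing that a majorant of the form $C_\e(t-\tau)^{\alpha/2}\,\G_{(1+\e)A(\z)}$ forces the relative error to be $O\big(C_\eta^{-\alpha/N}\big)$ once $\e$ is fixed with $\e N/(2(1+\e))<\alpha/2$ (your bounds $t-\tau<\delta_0$ and $\langle A(\z)^{-1}(x-\xi),x-\xi\rangle/(4(t-\tau))<\tfrac N2\log(\delta_0/(t-\tau))$ on the super-level set both check out). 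Your closing diagnosis is also exactly right: the crude bound $|\G-Z|\le C(t-\tau)^{\alpha/2}\G^+$ with a fixed $\Lambda^+>\Lambda$ cannot work, since dividing by $Z$ produces $\exp\big((\lambda^{-1}-(\Lambda^+)^{-1})|x-\xi|^2/(4(t-\tau))\big)$, which on $\{Z>C_\eta\}$ is a negative power of $t-\tau$ with exponent bounded away from zero.

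The genuine gap is the middle step, which is asserted rather than proven and which carries essentially all of the technical content of the proposition. The case $k=1$ is fine (note only that \eqref{eq-LZ} omits the lower-order terms $\langle b,\nabla_x Z\rangle+cZ$, which must be included but are harmless because $(t-\tau)^{-1/2}\le(t-\tau)^{\alpha/2-1}$ for $t-\tau\le1$). The problem is the iteration \eqref{eq-LkZ}: the reproduction property \eqref{eq-rep-prop-A} does not apply to the product $\G_{(1+\e)A(y,s)}(x,t;y,s)\,\G_{(1+\e)A(\z)}(y,s;\z)$ because the two frozen matrices differ, and if resolving this ``costs a further harmless enlargement of $\e$'' at each of the $k$ convolutions defining $(\L Z)_k$, the dilation accumulates to $(1+\e)^k$ along the series and no fixed Gaussian rate survives the summation. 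One must instead prove the bound for $(\L Z)_k$ by induction with a \emph{single} fixed $\e$, using the H\"older increment $|A(y,s)-A(\z)|\le M(|y-\xi|^\alpha+(s-\tau)^{\alpha/2})$ to absorb the discrepancy between consecutive frozen matrices into the factor $(s-\tau)^{k\alpha/2-1}/\Gamma_E(k\alpha/2)$ (and into a geometric constant $C^k$, which the Mittag--Leffler decay tolerates) rather than into the Gaussian rate. This is standard and does go through, but it is precisely where the proof lives; as written, your argument is a correct and well-motivated outline of the proof in \cite{Polidoro2} rather than a complete proof.
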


We finally prove a simple consequence of Proposition \ref{prop-localestimate} that will be used in the following. We introduce some further notation in order to give its statement. We first note that the function $\Gamma^*$ can be built by using the parametrix method, starting from the expression of the parametrix relevant to $\L^*$, that is
\begin{equation} \label{eq-fundsol-Z*}
 Z^*(z;\z) := \G^*_{A(\z)}(z;\z) = \frac{1}{\sqrt{(4 \pi (\t-t))^{N}\det A(\z)}} \exp \left( - 
 \frac{\langle A(\z)^{-1}(x-\x),x-\x \rangle}{4(\t-t)} \right).
\end{equation}
We set 
\begin{equation} \label{eq-Omega_r*}
 \Omega_r^*(z_0) := \bigg\{ z \in \R^{N+1} \mid Z^*(z;z_0) \ge \frac{2}{r^N} \bigg\},
\end{equation}
and we point out that its explicit expression is:
\begin{equation} \label{eq-Omega_r*-exp}
\begin{split}
 \Omega_r^*(z_0) = & \Big\{ (x,t) \in \R^{N+1} \mid \langle A^{-1}(z_0)(x-x_0), x-x_0 \rangle \le \\
 & \qquad - 4 (t_0 - t) \left( \log \big( \tfrac{2}{r^N} \big) + \tfrac{1}{2} \log (\text{det} A(z_0) )+ \tfrac{N}{2} \log (4 \pi (t_0 - t))  \right) \Big\}.
\end{split}
\end{equation}
We have

\begin{lemma} \label{lem-localestimate*}
There exists a positive constant $r^*$, only depending on the operator $\L$, such that 
\begin{equation*} 
 \Omega_r^*(z_0) \subset \Omega_r(z_0) \subset \Omega_{3r}^*(z_0)
\end{equation*}
for every $z_0 \in \R^{N+1}$ and $r \in ]0,r^*]$.                                                                                               
\end{lemma}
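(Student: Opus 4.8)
The plan is to transfer the parametrix approximation of Proposition~\ref{prop-localestimate} to the transposed operator and then to compare superlevel sets. Since $\G^*$ is built by the parametrix method starting from $Z^*$, and $\L^*$ satisfies the same structural hypotheses as $\L$ (in particular its zero order coefficient $c-\div b$ is H\"older continuous), Proposition~\ref{prop-localestimate} applies verbatim to the pair $(\G^*,Z^*)$: for every $\y\in]0,1[$ there is $C_\y>0$ such that
\[
 (1-\y)\,Z^*(z;\z)\ \le\ \G^*(z;\z)\ \le\ (1+\y)\,Z^*(z;\z)\qquad\text{whenever}\quad Z^*(z;\z)>C_\y .
\]
Recalling that $\G(z_0;z)=\G^*(z;z_0)$, we have $\Omega_r(z_0)=\{z:\G^*(z;z_0)>r^{-N}\}$ and $\Omega_r^*(z_0)=\{z:Z^*(z;z_0)\ge 2r^{-N}\}$, so the statement becomes a comparison of superlevel sets of $\G^*(\cdot;z_0)$ and $Z^*(\cdot;z_0)$. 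I fix $\y=\tfrac14$, set $C:=C_{1/4}$ and choose $r^*>0$ so small that $(r^*)^{-N}>\tfrac54\,C$; then $2r^{-N}>C$ and $r^{-N}>(1+\y)C$ for every $r\le r^*$.

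The inclusion $\Omega_r^*(z_0)\subseteq\Omega_r(z_0)$ is then immediate: if $Z^*(z;z_0)\ge 2r^{-N}>C$, the lower bound above gives $\G^*(z;z_0)\ge(1-\tfrac14)Z^*(z;z_0)\ge\tfrac32 r^{-N}>r^{-N}$, hence $z\in\Omega_r(z_0)$.

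For the inclusion $\Omega_r(z_0)\subseteq\Omega_{3r}^*(z_0)$ the decisive point is the claim that, for $r\le r^*$, one has $\Omega_r(z_0)\subseteq\{z:Z^*(z;z_0)>C\}$, i.e.\ that the parabolic ball lies entirely in the region where the parametrix approximation holds. Granting this, for $z\in\Omega_r(z_0)$ the upper bound gives $\G^*(z;z_0)\le(1+\tfrac14)Z^*(z;z_0)$, so $Z^*(z;z_0)\ge\tfrac45\,\G^*(z;z_0)>\tfrac45\,r^{-N}\ge 2(3r)^{-N}$ (the last inequality being $3^N\ge\tfrac52$, true for $N\ge1$), whence $z\in\Omega_{3r}^*(z_0)$; this is exactly why the dilation factor $3$ and the constant $2$ in \eqref{eq-Omega_r*} leave just enough room to absorb the $(1\pm\y)$ distortion. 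To prove the claim I would first use the Gaussian bound \eqref{upper-bound} for $\G^*$ to localise, namely $z\in\Omega_r(z_0)$ forces $0<t_0-t< c_0\,r^2$ with $c_0$ depending only on $\L$, so that $\Omega_r(z_0)$ shrinks to $z_0$ as $r\to0$; on the level set $\{Z^*(\cdot;z_0)=C\}$ the upper bound of Proposition~\ref{prop-localestimate} (passing to the limit $Z^*\downarrow C$) gives $\G^*(\cdot;z_0)\le(1+\y)C<r^{-N}$; and, again by Proposition~\ref{prop-localestimate}, near $z_0$ the set $\Omega_r(z_0)$ is squeezed between two explicit parabolic cusps of the type \eqref{eq-Omega_r*-exp}, hence is connected there and contains the axis points $(x_0,t_0-s)$ for $s$ small, where $Z^*$ is arbitrarily large. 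If some $z_1\in\Omega_r(z_0)$ had $Z^*(z_1;z_0)\le C$, joining it inside $\Omega_r(z_0)$ to such an axis point and using the continuity of $Z^*(\cdot;z_0)$ would produce $z_2\in\Omega_r(z_0)$ with $Z^*(z_2;z_0)=C$ and $\G^*(z_2;z_0)>r^{-N}$, contradicting $\G^*(z_2;z_0)\le(1+\y)C<r^{-N}$.

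The only real obstacle is this last argument: the crude bound \eqref{upper-bound} is too lossy in the spatial tails to yield directly that $\G^*$ is large only where $Z^*$ is large, so one genuinely has to exploit the \emph{global} structure of $\Omega_r(z_0)$ — its connectedness near $z_0$, or equivalently a maximum principle for $\L^*$ applied on $\Omega_r(z_0)\cap\{Z^*(\cdot;z_0)\le C\}$ — together with the \emph{sharp} (not merely Gaussian) estimate of Proposition~\ref{prop-localestimate}; everything else is elementary arithmetic and the explicit form \eqref{eq-Omega_r*-exp} of $\Omega_r^*(z_0)$.
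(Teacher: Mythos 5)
Your route is the paper's own: transfer Proposition~\ref{prop-localestimate} to the adjoint via $\G(z_0;z)=\G^*(z;z_0)$, fix $\eta$, and compare the superlevel sets of $\G^*(\cdot;z_0)$ and $Z^*(\cdot;z_0)$ (the paper takes $\eta=\tfrac12$ and $r^*=(2/C^*)^{1/N}$; your $\eta=\tfrac14$ works just as well). The inclusion $\Omega_r^*(z_0)\subset\Omega_r(z_0)$ and the arithmetic of the second inclusion ($3^N\ge\tfrac52$ absorbing the $(1\pm\eta)$ distortion) are correct.

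The step you leave open is, however, a genuine gap as written. To apply the upper bound of Proposition~\ref{prop-localestimate} at a point $z\in\Omega_r(z_0)$ you must first know $Z^*(z;z_0)>C_\eta$, and your argument for this presupposes that $z$ can be joined to an axis point \emph{inside} $\Omega_r(z_0)$, i.e.\ that $\Omega_r(z_0)$ is path-connected. That is not established: the fact that $\Omega_r(z_0)$ is squeezed between two explicit cusps \emph{near the pole} says nothing about a putative component of $\Omega_r(z_0)$ lying entirely inside $\{Z^*(\cdot;z_0)\le C_\eta\}$ — which is exactly the configuration you need to exclude, so the connectedness claim is essentially equivalent to the statement being proved. (The maximum-principle variant you mention also needs care, since $z_0$ lies in the closure of $\{Z^*(\cdot;z_0)<C_\eta\}\cap\{t<t_0\}$.) You are right that the crude Gaussian bound \eqref{upper-bound} cannot substitute here, because $\Gamma^+$ decays more slowly in space than $Z^*$. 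Two honest ways to close the gap: either invoke the asymptotic estimate of \cite{Polidoro2} in its additive form $|\G-Z|\le\eta\,Z+C_\eta$ valid for \emph{all} pairs of points (then $\G^*(z;z_0)>r^{-N}$ directly forces $Z^*(z;z_0)\ge\frac{1}{1+\eta}\bigl(r^{-N}-C_\eta\bigr)\ge 2(3r)^{-N}$ for $r\le r^*$ small, with no connectedness needed), or note explicitly that the version of the estimate with the threshold placed on $\G$ rather than on $Z$ is what is actually required. It is fair to add that the paper's own one-line proof passes over this same point in silence, so you have correctly located the crux even though you have not resolved it.
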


\begin{proof} 
As said before, the function $\Gamma^*$ can be built by using the parametrix $Z^*$ defined in \eqref{eq-fundsol-Z*}. In particular, Proposition \ref{prop-localestimate} applies to $\Gamma^*$. Then, if we apply the estimate \eqref{eq:approximation} with $\eta = \frac12$ and we use \eqref{e-L*}, we find that there exists $C^*>0$ such that
\begin{equation*} 
 \frac12 Z^*(\z;z_0) \le  \G(z_0;\z) \le  \frac32 Z^*(\z;z_0)
\end{equation*}
for every $z_0, \z \in \R^{N+1}$ such that $Z^*(\z;z_0) > C^*$. The claim then follows from \eqref{e-Psi} and \eqref{eq-Omega_r*} by choosing $r^*:=\left(\frac{2}{C^*}\right)^{1/N}$.
\end{proof}

\medskip

We conclude this section with a further result useful in the proof of the Harnack inequality.

\begin{lemma} \label{lem-localestimategradient} {\sc [Proposition 5.3 in \cite{Polidoro2}]} \  
Let $r^*$ be the constant appearing in Lemma \ref{lem-localestimate*}. There exists a positive constants $C$, only depending on the operator $\L$, such that 
\begin{equation*} 
 \left| \partial_{x_j} \Gamma(z_0, z) \right| \le C \left( \frac{|x_0-x|}{t_0-t} + 1 \right) \Gamma(z_0, z), 
 \qquad j=1, \dots, N,
\end{equation*}
for every $z_0 \in \R^{N+1}$ and $z \in \Omega_r(z_0)$ with $r \in ]0,r^*]$.                                                                                               
\end{lemma}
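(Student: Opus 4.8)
The plan is to compare $\partial_{x_j}\Gamma(z_0;z)=\partial_{x_j}\Gamma^*(z;z_0)$ with the $x$‑derivative of the parametrix $Z^*(\,\cdot\,;z_0)$ of \eqref{eq-fundsol-Z*}, which is explicit, and to estimate the remainder $\Psi(z;z_0):=\Gamma^*(z;z_0)-Z^*(z;z_0)$ through the series that defines it by the parametrix method. First I would note that, on replacing $r^*$ by a smaller constant depending only on $\L$ if necessary, the inclusion $\Omega_r(z_0)\subset\Omega_{3r}^*(z_0)$ of Lemma \ref{lem-localestimate*} together with \eqref{eq-Omega_r*} gives $Z^*(z;z_0)\ge 2(3r)^{-N}>C^*$ for every $z\in\Omega_r(z_0)$, so that the inequalities $\tfrac12 Z^*(z;z_0)\le\Gamma(z_0;z)\le\tfrac32 Z^*(z;z_0)$ obtained in the proof of Lemma \ref{lem-localestimate*} hold on $\Omega_r(z_0)$; in particular $\Gamma(z_0;z)$ and $Z^*(z;z_0)$ are comparable there. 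From the explicit expression of $Z^*$ one gets $\partial_{x_j}Z^*(z;z_0)=-\tfrac{1}{2(t_0-t)}\,[A^{-1}(z_0)(x-x_0)]_j\,Z^*(z;z_0)$, hence by \eqref{e-up}
\[
\bigl|\partial_{x_j}Z^*(z;z_0)\bigr|\le\frac{|x_0-x|}{2\lambda(t_0-t)}\,Z^*(z;z_0)\le\frac{|x_0-x|}{\lambda(t_0-t)}\,\Gamma(z_0;z),
\]
which is already of the required form. It therefore remains to prove that $\bigl|\partial_{x_j}\Psi(z;z_0)\bigr|\le C\bigl(\tfrac{|x_0-x|}{t_0-t}+1\bigr)\Gamma(z_0;z)$ on $\Omega_r(z_0)$.

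For this one differentiates in $x$ the series $\Psi(z;z_0)=\sum_{k\ge1}\int_t^{t_0}\!\int_{\R^N}Z^*(z;y,s)\,(\L^*Z^*)_k(y,s;z_0)\,dy\,ds$ term by term (the term $k=1$ being the critical one), using the pointwise bound $\bigl|\partial_{x_j}Z^*(z;y,s)\bigr|\le\tfrac{|x-y|}{2\lambda(s-t)}Z^*(z;y,s)$ for the outer factor, the iterate bounds $\bigl|(\L^*Z^*)_k(y,s;z_0)\bigr|\le C_k(t_0-s)^{k\alpha/2-1}\Gamma^+(z_0;y,s)$ analogous to those recalled in Section \ref{secFundSol}, the reproduction property of $\Gamma^+$ to perform the integration in $y$, and a Beta‑integral estimate in $s$. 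The main obstacle is that these crude estimates only produce a bound of the shape $C(t_0-t)^{\alpha/2-1/2}\,\Gamma^+(z_0;z)$, in which the Gaussian has the too‑large width $\Lambda^+>\Lambda$: on $\Omega_r(z_0)$ the ratio $\Gamma^+(z_0;z)/Z^*(z;z_0)$ grows like $\exp\!\bigl(c\,|x-x_0|^2/(t_0-t)\bigr)$ with $c>0$, while the description \eqref{eq-Omega_r*-exp} of $\Omega_{3r}^*(z_0)$ only bounds $|x-x_0|^2/(t_0-t)$ by a multiple of $\log\tfrac{1}{t_0-t}$; hence this quotient, and with it the crude bound, is off from $\tfrac{|x_0-x|}{t_0-t}+1$ by a positive power of $1/(t_0-t)$, which blows up as $z$ approaches $z_0$ (e.g. along the half‑line $x=x_0$).

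To close the argument one cannot afford to widen the Gaussian, and the key is to keep track of the H\"older gain carried by the parametrix kernels: writing $\L^*Z^*(y,s;z_0)=\div_y\bigl[(A(y,s)-A(z_0))\nabla_y Z^*(y,s;z_0)\bigr]$ plus lower‑order terms, the factor $A(y,s)-A(z_0)$ is comparable to the $\alpha$‑th power of the parabolic distance from $(y,s)$ to $z_0$, and after integration in $y$ against the factor in $\partial_{x_j}Z^*(z;y,s)$ (odd in $x-y$) a cancellation lowers the singular power in $t_0-t$ by a further half, replacing the forbidden exponential by the factor $\tfrac{|x_0-x|}{t_0-t}+1$. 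Carrying this analysis out for every $k$ — which is exactly Proposition 5.3 of \cite{Polidoro2} and is the technical core of the argument — yields $\bigl|\partial_{x_j}\Psi(z;z_0)\bigr|\le C\bigl(1+\tfrac{|x_0-x|}{t_0-t}\bigr)Z^*(z;z_0)$ on $\Omega_r(z_0)$; combining this with $Z^*(z;z_0)\le2\Gamma(z_0;z)$ and with the estimate for $\partial_{x_j}Z^*$ above completes the proof.
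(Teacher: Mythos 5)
The paper offers no proof of this lemma: it is imported verbatim as Proposition 5.3 of \cite{Polidoro2}, exactly as Proposition \ref{prop-localestimate} is imported as Theorem 1.2 of the same reference, so there is no in-paper argument to measure your proposal against. Your reduction is sound as far as it goes: the splitting $\partial_{x_j}\Gamma(z_0;z)=\partial_{x_j}Z^*(z;z_0)+\partial_{x_j}\Psi(z;z_0)$ with $\Psi:=\Gamma^*-Z^*$, the explicit computation giving $\left|\partial_{x_j}Z^*(z;z_0)\right|\le\frac{|x-x_0|}{2\lambda(t_0-t)}\,Z^*(z;z_0)$, and the comparability of $Z^*(\cdot\,;z_0)$ and $\Gamma(z_0;\cdot)$ on $\Omega_r(z_0)$ (modulo the harmless shrinking of $r^*$ that you flag) correctly dispose of the parametrix term. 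Your diagnosis of why the naive series estimate fails is also accurate and is genuinely the crux: the $k=1$ iterate costs a factor $(t_0-t)^{(\alpha-1)/2}$, and the Gaussian $\Gamma^+$ has covariance $\Lambda^+>\Lambda$, so that on $\Omega_r(z_0)$, where $|x-x_0|^2/(t_0-t)$ is only controlled by a multiple of $\log\frac{1}{t_0-t}$, the quotient $\Gamma^+/Z^*$ costs a further positive power of $(t_0-t)^{-1}$.

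However, the step that actually proves the lemma is not carried out. The decisive bound $\left|\partial_{x_j}\Psi(z;z_0)\right|\le C\bigl(1+\frac{|x_0-x|}{t_0-t}\bigr)Z^*(z;z_0)$ is asserted on the strength of a described mechanism (the H\"older gain in $A(y,s)-A(z_0)$, a cancellation coming from the oddness of $\nabla_yZ^*$ in $x-y$, uniformity over all iterates $k$), and you explicitly defer its execution to Proposition 5.3 of \cite{Polidoro2}. As written this is an annotated citation rather than a proof: the quantitative content of that cancellation --- how it restores a Gaussian with the correct, non-enlarged covariance while producing only the factor $1+|x_0-x|/(t_0-t)$ --- is exactly the technical core, and it is left unproven. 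If you are content to quote \cite{Polidoro2}, that is precisely what the paper does, and your first two reduction steps become redundant; if you intend a self-contained argument, the gap sits exactly at the estimate of $\partial_{x_j}\Psi$ on $\Omega_r(z_0)$.
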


\setcounter{equation}{0} 
\section{A generalized divergence theorem}\label{SectionDivergence}

Let $\Omega$ be an open subset of $\R^n$, and let $\Phi \in C^1 \pr{\Omega;\R^{n}}$. The classical divergence formula reads
\begin{equation} \label{eq-div}
 \int_{ E } \mathrm{div}\,\Phi\ dz =-\int_{ \p E} \scp{\nu,\Phi}\ d \H^{n-1},
\end{equation}
where $E$ is a bounded set such that $\overline E \subset \Omega$ and its boundary is $C^1$. 

We are interested in the situation in which $E$ is the \emph{super-level} set of a real valued 
function $F \in C^1\pr{\O}$, that is $E = \left\{F>y\right\}$ for some $y \in \R$. At every point 
$z\in \partial E$ such that $\nabla F(z)\neq 0$ the \emph{inner} unit normal vector $\nu = \nu(z)$ 
appearing in \eqref{eq-div} is defined as $\nu(z) = \frac{1}{|\nabla F(z)|}\nabla F(z)$ and $\partial E$ 
is a $C^1$ manifold in a neighborhood of $z$. But, if we denote
$$
    \mathrm{Crit} \pr{F}:=\left\{z \in \R^n : \nabla F =0\right\},
$$
the set of \emph{critical points} and $F \pr{\mathrm{Crit}\pr{F}}$ the set of \emph{critical values} of 
$F$, under our hypotheses we cannot apply the classical Sard theorem to state that 
``for almost every $y \in \R$ the level set $\{F=y\}$ is globally a $C^1$ manifold''. Indeed, Whitney proves in \cite{whitney1935} that there exist functions $F \in C^1\pr{\O}$ having the property that $\{F=y\} \cap \mathrm{Crit} \pr{F}$ is not empty for every $y$. 
Therefore, the purpose of this section is to discuss a version of \eqref{eq-div} 
when the boundary of $E$ is $C^1$ up to a closed set of null Hausdorff measure and to see 
how it can be applied in our framework. We first introduce the class of sets with the relevant 
regularity and state the corresponding divergence formula. 
We draw this definition and the following theorem from \cite[Section 9.3]{Maggi}. 

\begin{definition}\label{def-almost-C1}
An open set $E\subset \R^n$ has {\em almost $C^1$-boundary} if there is a closed set $M_0\subset \partial E$
with $\H^{n-1}(M_0)=0$ such that, for every $z_0\in M = \partial E \setminus M_0$ there exist $s > 0$ and 
$F \in C^1 (B(z_0, s))$ with the property that
\begin{align*}
B(z_0,s) \cap E & = \{z\in B(z_0,s):\ F(z) > 0\} ,
\\
B(z_0,s) \cap \partial E & = \{z\in B(z_0,s):\ F(z) = 0\} 
\end{align*}
and $\nabla F(z)\neq 0$ for every $z\in B(z_0,s)$. 
We call $M$ the {\em regular part} of $\partial E$ (note that $M$ is a $C^1$-hypersurface). 
The inner unit normal to $E$ is the continuous vector field
$\nu\in C^0 (M;{\mathbb S}^{n-1})$ given by 
\[
\nu(z) = \frac{\nabla F(z)}{|\nabla F(z)|}, \quad
z \in B(z_0,s) \cap M .
\]
\end{definition}

Let us state the divergence theorem for sets with almost $C^1$-boundary. 
 
\begin{theorem}\label{gen-div-thm}
If $E\subset\R^n$ is an open set with almost $C^1$-boundary and $M$ is the regular part of its boundary, 
then for every $\Phi\in C^1_c(\R^n;\R^n)$ the following equality holds
\begin{equation}\label{eq-div-almost}
 \int_{ E } \mathrm{div}\,\Phi\ dz =-\int_{M} \scp{\nu,\Phi}\ d \H^{n-1} .
\end{equation}
\end{theorem}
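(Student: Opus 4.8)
The plan is to reduce the divergence theorem for sets with almost $C^1$-boundary to the classical one by exhausting the regular part $M$ with compact pieces and showing that the singular part $M_0$, having zero $\H^{n-1}$-measure, contributes nothing. First I would fix $\Phi \in C^1_c(\R^n;\R^n)$ and, for small $\e>0$, consider the open $\e$-neighborhood $U_\e$ of $M_0$. Since $M_0$ is closed with $\H^{n-1}(M_0)=0$, it is in particular closed and null for $\H^{n-1}$; I would like to build a cutoff function $\chi_\e \in C_c^\infty(\R^n;[0,1])$ that equals $1$ near $M_0$, is supported in $U_\e$, and whose gradient satisfies an integrable bound against $\H^{n-1}\mres M$. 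The cleanest route is to cover $M_0$ by finitely many balls whose radii have $(n-1)$-dimensional content summing to less than $\e$ (possible because $\H^{n-1}(M_0)=0$ and $M_0$ is compact — here one should note $M_0 \subset \partial E$ and, since $\Phi$ has compact support, only the part of $\partial E$ inside a fixed large ball matters, so we may assume $M_0$ is compact), and take $\chi_\e$ adapted to this cover.

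The second step is to apply the classical divergence theorem \eqref{eq-div} to the vector field $(1-\chi_\e)\Phi$ on the open set $E$. Away from $M_0$ the boundary $\partial E$ coincides with the $C^1$-hypersurface $M$, and $(1-\chi_\e)\Phi$ is a $C^1$ vector field that vanishes in a neighborhood of $M_0$; hence its support meets $\partial E$ only in a compact subset of $M$, where the hypotheses of the classical theorem are met (one can localize using the local graph representations from Definition \ref{def-almost-C1} and a partition of unity subordinate to the balls $B(z_0,s)$). This yields
\begin{equation*}
\int_E \div\big((1-\chi_\e)\Phi\big)\,dz = -\int_M (1-\chi_\e)\,\scp{\nu,\Phi}\,d\H^{n-1}.
\end{equation*}
Expanding the divergence on the left gives $\int_E (1-\chi_\e)\div\Phi\,dz - \int_E \scp{\nabla\chi_\e,\Phi}\,dz$.

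The third step is to let $\e \to 0$. On the right-hand side, $(1-\chi_\e) \to 1$ boundedly and $\H^{n-1}$-a.e. on $M$, so dominated convergence (with dominating function $|\Phi|$, integrable over $M \cap \mathrm{supp}\,\Phi$ since $M$ is locally a $C^1$-hypersurface) gives convergence to $-\int_M \scp{\nu,\Phi}\,d\H^{n-1}$. The first term on the left converges to $\int_E \div\Phi\,dz$ by dominated convergence. The crux is the term $\int_E \scp{\nabla\chi_\e,\Phi}\,dz$: one must show it tends to $0$. Since $|\nabla\chi_\e|$ is of order $1/\e$ and supported in $U_\e$, the naive bound is $\frac{C}{\e}\,|E \cap U_\e|$, and $|E\cap U_\e| \le |U_\e| \lesssim \e \cdot \H^{n-1}(M_0) \cdot(\dots)$ — but $\H^{n-1}(M_0)=0$ makes even the crude Minkowski-content estimate deliver a bound $o(1)$ rather than merely $O(1)$, once the cover is chosen with total $(n-1)$-content $<\e$: then $|U_\e| \le C\e$ times (number-of-balls-weighted content) which can be arranged to be $o(1)$. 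This is the step requiring care: one genuinely needs the finite cover of $M_0$ by balls of small total $\H^{n-1}$-content, and the estimate $|E \cap U_\e| = o(\e)$ (equivalently, the upper Minkowski content of $M_0$ relative to $E$ is controlled by $\H^{n-1}(M_0)=0$); a convenient way to phrase this is via the coarea-type / Vitali covering bound that for a compact $\H^{n-1}$-null set, the Lebesgue measure of its $\e$-neighborhood is $o(\e)$.

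I expect the main obstacle to be precisely this last estimate — showing $\int_E \scp{\nabla\chi_\e,\Phi}\,dz \to 0$ — because it is the only place where the hypothesis $\H^{n-1}(M_0)=0$ (as opposed to, say, $M_0$ being lower-dimensional in a stronger sense) is used in an essential, quantitative way, and it requires either a careful construction of $\chi_\e$ from a good cover or an appeal to the fact that null Hausdorff measure controls the growth of tubular neighborhoods. Everything else — the localization via Definition \ref{def-almost-C1}, the partition of unity, the passage to the limit on the other terms — is routine. An alternative, and perhaps cleaner, packaging is to cite directly the statement from \cite[Section 9.3]{Maggi}, from which Definition \ref{def-almost-C1} and this theorem are drawn, and merely sketch the cutoff argument; but the self-contained proof above is the one I would write out.
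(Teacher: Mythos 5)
The paper does not actually prove this theorem: it is quoted verbatim from Maggi's book (Section 9.3), so the only meaningful comparison is with the standard argument given there, which is precisely the one you are reconstructing — truncate $\Phi$ by a cutoff built from a finite cover of $M_0\cap\mathrm{supp}\,\Phi$ by balls of small total $(n-1)$-content, apply the localized $C^1$ divergence theorem to $(1-\chi_\e)\Phi$, and pass to the limit. Your overall architecture is therefore the right one, and the decisive estimate is available to you: if the cover is $\{B(x_i,r_i)\}_{i\le N}$ with $\sum_i r_i^{n-1}<\delta$ and $\chi_\e$ is assembled from bump functions with $|\nabla\chi_\e|\le C\sum_i r_i^{-1}\mathbf{1}_{B(x_i,2r_i)}$, then $\int_E|\scp{\nabla\chi_\e,\Phi}|\,dz\le C\|\Phi\|_\infty\sum_i r_i^{-1}\,|B(x_i,2r_i)|= C'\|\Phi\|_\infty\sum_i r_i^{n-1}<C'\|\Phi\|_\infty\,\delta$. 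This ball-by-ball bound is all you need; it uses $\H^{n-1}(M_0)=0$ exactly once, through the choice of the cover.

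Two points in your write-up are genuinely wrong or incomplete as stated. First, the ``convenient phrasing'' you propose at the crux — that for a compact $\H^{n-1}$-null set the Lebesgue measure of its $\e$-neighborhood is $o(\e)$ — is false: Hausdorff measure does not control Minkowski content, and a compact set with $\H^{n-1}=0$ (indeed with Hausdorff dimension $0$) can have upper box dimension $n-1$ or larger, so that $|U_\e|$ is of order $\e$ or worse. Any route through uniform tubular neighborhoods and $|\nabla\chi_\e|\sim 1/\e$ therefore collapses; you must commit to the non-uniform cover and the estimate displayed above, in which each ball contributes $r_i^{n-1}$ regardless of how the radii compare to one another. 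Second, your dominated-convergence step on the boundary term presupposes $\int_{M\cap\mathrm{supp}\,\Phi}|\Phi|\,d\H^{n-1}<\infty$, and the justification ``since $M$ is locally a $C^1$-hypersurface'' does not deliver this: $M\cap\mathrm{supp}\,\Phi$ is not a compact subset of $M$ (its closure may meet $M_0$), and a priori $M$ could accumulate on $M_0$ with infinite area, in which case even the right-hand side of \eqref{eq-div-almost} is not obviously well defined. This local finiteness of $\H^{n-1}\mres M$ near $M_0$ must either be extracted from the cutoff identity itself (e.g.\ by testing with fields adapted to $\nu$ on compact pieces of $M$ and using the uniform bound on the truncated boundary integrals) or be taken as part of the hypotheses, as it effectively is in the source the paper cites; as written, your proof is silent on it.
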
 

If $F \in C^1\pr{\O}$ and $E = \left\{F>y\right\}$ for some $y \in \R$, we can apply 
Theorem \ref{gen-div-thm} thanks to the following result due to 
A.~Ya.~Dubovicki\v{\i} \cite{Dubovickii}, that generalizes Sard's theorem.

\begin{theorem}[\sc Dubovicki\v{\i}] \label{th-Dubo}
Assume that $\N^n$ and $\M^m$ are two smooth Riemannian manifolds of dimension $n$ and $m$, respectively. 
Let $F:\N^n \rightarrow \M^m$ be a function of class $C^k$. Set $s=n-m-k+1$, then for 
$\H^m-$a.e.~$y \in \M^m$
\begin{equation}\label{e-Du} 
\H^s \pr{\left\{F=y \right\} \cap \mathrm{Crit} \pr{F}}=0.
\end{equation}
\end{theorem}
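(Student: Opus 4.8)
The plan is to obtain \eqref{e-Du} as a sharpened Morse--Sard theorem, arguing by induction on the dimension $n$ (with $1\le m\le n$ and $k\ge1$ arbitrary). Both the conclusion ``$\{F=y\}\cap\mathrm{Crit}(F)$ is $\H^s$-negligible'' and the phrase ``for $\H^m$-a.e.\ $y$'' are local and invariant under bi-Lipschitz changes of variables, and on any precompact chart a Riemannian metric is bi-Lipschitz equivalent to the Euclidean one; so I would first reduce to the Euclidean situation $F\in C^k(U;\R^m)$ with $U\subset\R^n$ a bounded open cube, and show that $\H^s\big(F^{-1}(y)\cap C\big)=0$ for a.e.\ $y\in\R^m$, where $C=\mathrm{Crit}(F)=\{x\in U:\ \mathrm{rank}\,DF(x)<m\}$. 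When $s\le0$ this is exactly the classical Morse--Sard theorem ($k\ge n-m+1$), which I take as known and which serves as the base of the induction; so I may assume $s>0$. Throughout I would work with the Hausdorff content $\H^s_\infty$, using the elementary equivalence $\H^s(A)=0\iff\H^s_\infty(A)=0$ (valid for $s>0$): it then suffices to prove $\int_{\R^m}\H^s_\infty\big(F^{-1}(y)\cap C\big)\,dy=0$, the joint measurability of $y\mapsto\H^s_\infty(F^{-1}(y)\cap C)$ following from a routine upper-semicontinuity argument.

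The second step is to stratify $C$. For $1\le i\le k$ put $C_i:=\{x\in U:\ D^\gamma F(x)=0\ \text{for all}\ 1\le|\gamma|\le i\}$, so that $C\supseteq C_1\supseteq\cdots\supseteq C_k$ are closed and
\[
C=(C\setminus C_1)\cup\bigcup_{i=1}^{k-1}(C_i\setminus C_{i+1})\cup C_k ;
\]
by countable subadditivity of $\H^m$ it is enough to treat each stratum. For the \emph{flat stratum} $C_k$, Taylor's theorem gives $|F(x)-F(x_0)|=o(|x-x_0|^{k})$ uniformly for $x_0$ in a compact part of $C_k$; covering $C_k$ by dyadic cubes $Q'$ of side $2^{-j}$ one gets $\mathrm{diam}\,F(C_k\cap Q')\le\varepsilon_j2^{-jk}$ with $\varepsilon_j\to0$, and bounding, for fixed $y$, the fibre $C_k\cap F^{-1}(y)$ by those pieces $C_k\cap Q'$ whose image contains $y$, Tonelli's theorem yields
\[
\int_{\R^m}\H^s_\infty\big(C_k\cap F^{-1}(y)\big)\,dy\ \le\ (\sqrt n\,2^{-j})^{s}\sum_{Q'}\H^m\big(F(C_k\cap Q')\big)\ \le\ C\,\varepsilon_j^{m}\,2^{-j(m-1)(k-1)}\ \longrightarrow\ 0 ,
\]
the exponent being $n-s-km=-(m-1)(k-1)\le0$ and the factor $\varepsilon_j^m$ disposing of the borderline case $(m-1)(k-1)=0$; hence $\H^s(C_k\cap F^{-1}(y))=0$ for a.e.\ $y$. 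The same estimate, run on $C_i\setminus C_{i+1}$ with the $(i+1)$-st order Taylor expansion, disposes of that stratum whenever $i\,m\ge k-1$; in particular it settles every intermediate stratum when $k\le m+1$.

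For the stratum $C\setminus C_1$ (nonempty only if $m\ge2$) I would use the implicit function theorem: near a point with $\partial_{x_r}F_l\ne0$, a $C^k$ change of variables puts $F$ in the form $(u_1,u')\mapsto\big(u_1,G(u_1,u')\big)$ with each slice $G(u_1,\cdot)\in C^k(\R^{n-1};\R^{m-1})$, and a point is critical for $F$ exactly when $u'$ is critical for $G(u_1,\cdot)$. Applying the inductive hypothesis in dimension $n-1$ with target $m-1$ — the exponent is preserved, $(n-1)-(m-1)-k+1=s$ — to each slice, and then Fubini's theorem on $\R^m=\R\times\R^{m-1}$, gives the claim on this chart; a countable subcover of $C\setminus C_1$ together with $\H^m$-subadditivity concludes.

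There remain the intermediate strata $C_i\setminus C_{i+1}$ with $1\le i<(k-1)/m$, which occur only when $k>m+1$, and here the real work lies. Near $x_0$ some derivative of order $i+1$ is nonzero, say $\partial_{x_1}\!\big(D^\gamma F_1\big)(x_0)\ne0$ with $|\gamma|=i$; then $w:=D^\gamma F_1\in C^{k-i}$ vanishes at $x_0$ with $\partial_{x_1}w(x_0)\ne0$, so $V=\{w=0\}$ is, near $x_0$, a $C^{k-i}$ hypersurface through $x_0$ with $C_i\cap U\subset V$, and — since all derivatives of $F$ of order $\le i$ vanish on $C_i$ — the restricted map $F|_V$ (an $(n-1)$-dimensional $C^{k-i}$ map) has all derivatives of order $\le i$ vanishing on $C_i\cap U$; that is, the relevant part of $C_i$ lies in the $i$-flat stratum of $F|_V$. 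One then peels off a further hypersurface, or, once the restricted map has become $(i+1)$-flat at the point in question, applies the covering estimate of the second step in the lower-dimensional ambient space. Organising this as a secondary induction on the codimension of the successive reductions, with the exponent $s$ and the surviving order of regularity kept in balance at every step, is the main obstacle — the individual estimates are routine. Granting it, one removes the countably many exceptional $\H^m$-null sets coming from the finitely many strata and the countable covers, gets $\H^s(F^{-1}(y)\cap C)=0$ off their union, and transports the conclusion back through the charts to recover \eqref{e-Du}.
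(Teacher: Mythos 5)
First, a point of comparison: the paper does not prove this statement at all --- Theorem \ref{th-Dubo} is quoted as a black box from Dubovicki\v{\i}'s paper (see also Bojarski--Haj\l asz--Strzelecki), so there is no internal proof to measure you against, and your attempt has to stand on its own. Your outline is the standard Morse--Sard stratification argument, sharpened by measuring fibres with Hausdorff content and integrating in $y$, and several pieces of it are correct and complete: the reduction to Euclidean charts; the flat stratum $C_k$, where the exponent computation $n-s-km=-(m-1)(k-1)\le 0$ is right and the uniform $o(\abs{x-x_0}^k)$ gain from $k$-flatness legitimately kills the borderline exponent; the stratum $C\setminus C_1$ via the implicit function theorem, slicing and Fubini, with the exponent $s$ preserved under $(n,m)\mapsto(n-1,m-1)$; and the intermediate strata with $im>k-1$. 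It is worth noting that the only case the paper ever uses is $m=k=1$, where $\mathrm{Crit}(F)=C_1=C_k$ is the flat stratum, so your second step alone already proves everything needed in Section \ref{SectionDivergence}.

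For the general statement, however, there is a genuine gap, which you flag yourself: the strata $C_i\setminus C_{i+1}$ with $im<k-1$ (present as soon as $k>m+1$) are not handled. The restriction device you sketch does not close as stated: $V=\{D^\gamma F_1=0\}$ with $\abs{\gamma}=i$ is only a $C^{k-i}$ hypersurface, so $F|_V$ is a $C^{k-i}$ map of an $(n-1)$-manifold into $\R^m$, and the Dubovicki\v{\i} exponent for that map is $(n-1)-m-(k-i)+1=s+i-1$, which for $i\ge 2$ is \emph{strictly larger} than $s$; knowing the fibres of $F|_V$ are $\H^{s+i-1}$-null says nothing about their $\H^{s}$-measure. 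Each peeling costs $i$ orders of differentiability while gaining only one dimension, and balancing this is exactly where the content of the theorem beyond classical Sard lies --- ``granting it'' concedes the theorem. A secondary soft spot: the borderline intermediate strata $im=k-1$ are not in fact disposed of by the covering estimate as you claim, because on $C_i\setminus C_{i+1}$ Taylor's formula gives only $F(x)-F(x_0)=O(\abs{x-x_0}^{i+1})$ (some $(i+1)$-st derivative is nonzero there by definition), so the factor $\varepsilon_j^m$ is unavailable and the estimate stays bounded away from zero; in particular the assertion that the covering step ``settles every intermediate stratum when $k\le m+1$'' already fails at $k=m+1$, $i=1$. So the proposal is an accurate road map of the known proof, with the case the paper needs fully worked out, but it is not a proof of the theorem as stated.
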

Notice that if $m=k=1$ and $\M^m=\R$, then $s=n-1$ and for $\H^1-$a.e.~$y \in \R$ the critical part 
of $\left\{F=y \right\}$ is an $\H^{n-1}$ null set, while its regular part is an $\pr{n-1}-$manifold of 
class $C^1$. In other words, $\{F=y\}$ is a set with  almost $C^1$-boundary and we cannot apply  
the classical divergence theorem \eqref{eq-div}, but rather Theorem \ref{gen-div-thm}. 
Summarizing, we have the following result, that immediately follows from the above discussion.

\begin{proposition} \label{prop-1}
Let $\Omega$ be an open subset of $\R^{n}$ and let $F \in C^1 \pr{{\Omega};\R}$.
Then, for $\H^1$-almost every $y \in \R$, we have:
$$
    \int_{\left\{F>y\right\}} \mathrm{div}\,\Phi\ dz =
    -\int_{\left\{F=y\right\}\setminus \mathrm{Crit} \pr{F}} \scp{\nu,\Phi}\ d \H^{n-1},
    \quad \forall \: \Phi \in C_c^1 \pr{\Omega;\R^{n}},
$$
were $\nu=\tfrac{\nabla F}{\abs{\nabla F}}$.
\end{proposition}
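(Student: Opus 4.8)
The plan is to combine Theorem~\ref{gen-div-thm} with Theorem~\ref{th-Dubo} exactly as advertised in the surrounding discussion. First I would fix the function $F \in C^1(\Omega;\R)$ and apply Dubovicki\v{\i}'s theorem with $\N^n = \Omega$, $\M^1 = \R$, and $k=1$, so that $s = n - 1 - 1 + 1 = n-1$; this yields that for $\H^1$-a.e.\ $y \in \R$ the set $\{F=y\} \cap \mathrm{Crit}(F)$ has zero $\H^{n-1}$-measure. Fix such a \emph{regular value level} $y$.

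Next I would verify that, for this $y$, the open set $E := \{F > y\}$ has almost $C^1$-boundary in the sense of Definition~\ref{def-almost-C1}. The candidate singular set is $M_0 := \{F = y\} \cap \mathrm{Crit}(F)$, which is closed (intersection of the closed sets $\{F=y\}$ and $\mathrm{Crit}(F) = \{\nabla F = 0\}$, both closed by continuity of $F$ and $\nabla F$) and satisfies $\H^{n-1}(M_0) = 0$ by the choice of $y$. For any $z_0 \in \partial E \setminus M_0$ one has $F(z_0) = y$ and $\nabla F(z_0) \neq 0$; by continuity $\nabla F \neq 0$ on a ball $B(z_0,s)$, and on that ball the function $z \mapsto F(z) - y$ is a $C^1$ defining function for $E$ with non-vanishing gradient, giving the two required identities $B(z_0,s)\cap E = \{F - y > 0\}$ and $B(z_0,s)\cap\partial E = \{F - y = 0\}$. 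One should also note $\partial E \subset \{F = y\}$ (again by continuity, since $F > y$ is open and $F < y$ is open), so that $M = \partial E \setminus M_0 \subset \{F=y\}\setminus\mathrm{Crit}(F)$; conversely every point of $\{F=y\}\setminus\mathrm{Crit}(F)$ lies on $\partial E$, so in fact $M = \{F=y\}\setminus\mathrm{Crit}(F)$ up to the $\H^{n-1}$-null adjustment, and the inner unit normal is $\nu = \nabla F/|\nabla F|$ as in the definition.

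Having established that $E$ has almost $C^1$-boundary, I would invoke Theorem~\ref{gen-div-thm}. There is one technical point: that theorem is stated for $\Phi \in C^1_c(\R^n;\R^n)$, whereas the proposition takes $\Phi \in C^1_c(\Omega;\R^n)$. This is harmless — one simply extends $\Phi$ by zero outside $\Omega$, which is again $C^1_c(\R^n;\R^n)$ because $\mathrm{supp}\,\Phi$ is a compact subset of the open set $\Omega$; and since $\mathrm{supp}\,\Phi \subset \Omega$, both integrals in \eqref{eq-div-almost} are unchanged whether computed over $E$, $M$ or over $E \cap \Omega$, $M \cap \Omega$. Plugging in, \eqref{eq-div-almost} reads
\[
\int_{\{F>y\}} \mathrm{div}\,\Phi \ dz = -\int_{M} \scp{\nu,\Phi}\ d\H^{n-1},
\]
and replacing $M$ by $\{F=y\}\setminus\mathrm{Crit}(F)$ (they differ by an $\H^{n-1}$-null set, so the surface integral is unaffected) gives precisely the claimed formula, with $\nu = \nabla F/|\nabla F|$ well-defined on the domain of integration since $\nabla F \neq 0$ there.

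I do not expect a genuine obstacle here: the proposition is essentially a repackaging of two quoted results, and the only things requiring care are the bookkeeping items above — checking that $M_0$ is closed and $\H^{n-1}$-null, identifying $\partial E$ with $\{F=y\}$, and passing from $C^1_c(\R^n)$ to $C^1_c(\Omega)$ test fields by zero-extension. If anything counts as "the hard part," it is making sure the choice of the exceptional set of $y$'s in Dubovicki\v{\i}'s theorem is the same one for which the almost $C^1$-boundary property holds; but this is immediate once one writes $M_0 = \{F=y\}\cap\mathrm{Crit}(F)$.
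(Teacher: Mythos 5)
Your proof is correct and follows exactly the route the paper itself takes: Dubovicki\v{\i}'s theorem to make $\{F=y\}\cap \mathrm{Crit}\pr{F}$ an $\H^{n-1}$-null set for $\H^1$-a.e.\ $y$, verification that $\{F>y\}$ then has almost $C^1$-boundary with regular part $\{F=y\}\setminus\mathrm{Crit}\pr{F}$, and an application of Theorem \ref{gen-div-thm}. The paper's proof is a three-line version of the same argument; your write-up merely supplies the bookkeeping (closedness of $M_0$, the identification of $\partial\{F>y\}$ inside $\Omega$, and the zero-extension of $\Phi$ from $C^1_c\pr{\Omega;\R^n}$ to $C^1_c(\R^n;\R^n)$) that the paper leaves implicit.
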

\begin{proof} By Dubovicki\v{\i} Theorem \ref{th-Dubo} for $\H^1-$almost every $y \in \R$ 
the set $\{F>y\}$ has almost $C^1$-boundary, hence Theorem \ref{eq-div-almost} applies. 
Moreover, as $F$ is continuous, for any such $y$ we have $\partial\{F>y\}\subset\{F=y\}$, 
$\H^{n-1}(\{F=y\}\setminus\partial\{F>y\})=0$ and the regular part of $\partial\{F>y\}$ is 
$\{F=y\}\setminus\{\nabla F=0\}$ and has full $\H^{n-1}$ measure.
\end{proof}

\medskip

In order to prove Theorem \ref{th-1} we apply Proposition \ref{prop-1} to the super-level set 
$\Omega_r(z_0)$ of the fundamental solution $\Gamma(z_0,\cdot)$ of $\L$. Then, as explained 
in the Introduction, we have to cut at a time less than $t_0$ to avoid the singularity of the kernels 
at $z_0$. Therefore, we specialize Proposition \ref{prop-1} as follows. 

\begin{proposition}\label{prop-1bis}
Let $G \in C^1 \pr{\R^{N+1} \setminus \left\{ \pr{x_0,t_0} \right\};\R}$. Then for $\H^1-$almost 
every $w, \varepsilon \in \R$ 
$$
\int_{\left\{ G > w \right\} \cap \left\{ t<t_0-\varepsilon \right\}}
\!\!\!\!\!\!\mathrm{div}\Phi\, dz
=-\int_{(\{ G = w \} \setminus \mathrm{Crit}\pr{G}) \cap \{ t<t_0-\varepsilon \}}
\!\!\!\!\!\!\scp{\nu,\Phi}d \H^{N}
+\int_{\left\{ G > w \right\} \cap \left\{ t=t_0-\varepsilon \right\}} 
\!\!\!\!\!\!\scp{e,\Phi}d\H^{N},
$$
for every $\Phi \in C^1_c \pr{\Omega;\R^{N+1}}$, where $\nu=\tfrac{\nabla G }{\abs{\nabla G}}$ and 
$e=\pr{0,\ldots,0,1}$.
\end{proposition}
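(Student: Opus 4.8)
The plan is to reduce the statement to Proposition \ref{prop-1} applied in the region below the hyperplane $\{t = t_0 - \varepsilon\}$, after which the cut term on $\{t = t_0 - \varepsilon\}$ appears as the contribution of the flat part of the boundary. First I would fix a parameter $\varepsilon > 0$ and consider the open half-space $H_\varepsilon := \{(x,t) \in \R^{N+1} : t < t_0 - \varepsilon\}$. On $H_\varepsilon$ the function $G$ is of class $C^1$, since the singular point $(x_0,t_0)$ has been excised. I would then apply Dubovicki\v{\i}'s Theorem \ref{th-Dubo} (in the form recorded in Proposition \ref{prop-1}) to $G$ restricted to $H_\varepsilon$: for $\H^1$-almost every $w \in \R$ the superlevel set $\{G > w\} \cap H_\varepsilon$ has regular part $(\{G = w\} \setminus \mathrm{Crit}(G)) \cap H_\varepsilon$, carrying full $\H^N$ measure in $\partial(\{G>w\}) \cap H_\varepsilon$.

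The second step is to handle the slice $\{t = t_0 - \varepsilon\}$. The set $E_{w,\varepsilon} := \{G > w\} \cap \{t < t_0 - \varepsilon\}$ is an open subset of $\R^{N+1}$ whose topological boundary decomposes, up to $\H^N$-null sets, into the ``lateral'' part $(\{G=w\}\setminus\mathrm{Crit}(G)) \cap \{t < t_0 - \varepsilon\}$, with inner normal $\nu = \nabla G / |\nabla G|$, and the ``top'' part $\{G > w\} \cap \{t = t_0 - \varepsilon\}$, which is a piece of a hyperplane and hence automatically $C^1$, with inner normal $e = (0,\dots,0,1)$ (pointing into $\{t < t_0 - \varepsilon\}$). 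To make this rigorous I would argue that for $\H^1$-a.e.\ pair $(w,\varepsilon)$ the set $E_{w,\varepsilon}$ has almost $C^1$-boundary in the sense of Definition \ref{def-almost-C1}: away from $\mathrm{Crit}(G) \cap \{G = w\}$ (an $\H^N$-null closed set, by the first step) and away from the ``edge'' $\{G = w\} \cap \{t = t_0 - \varepsilon\}$, the boundary is locally a $C^1$ graph — either of $G$ or of the linear function $t_0 - \varepsilon - t$. The edge itself must be shown to be $\H^N$-negligible; this follows because, by Dubovicki\v{\i} applied to the map $(x,t)\mapsto(G(x,t),t)$ or more simply by a Fubini-type slicing argument, for $\H^1$-a.e.\ $\varepsilon$ the hyperplane $\{t = t_0 - \varepsilon\}$ meets $\{G = w\}$ in an $\H^N$-null set.

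Once $E_{w,\varepsilon}$ is known to be an open set with almost $C^1$-boundary, the third step is a direct application of the generalized divergence theorem, Theorem \ref{gen-div-thm}, to the field $\Phi \in C^1_c(\Omega;\R^{N+1})$ (extended by $0$ outside $\Omega$, which is legitimate since $\Phi$ has compact support in $\Omega$): $\int_{E_{w,\varepsilon}} \div \Phi \, dz = -\int_{M} \langle \nu, \Phi \rangle \, d\H^N$, where $M$ is the regular part of $\partial E_{w,\varepsilon}$. Splitting $M$ into its lateral and top pieces and inserting the two normals computed above yields exactly the claimed identity, the sign on the last term flipping because the inner normal of $E_{w,\varepsilon}$ on the top face is $+e$ so that $-\langle \nu,\Phi\rangle = -\langle e,\Phi\rangle$ — wait, here one must be careful: the inner normal on $\{t = t_0-\varepsilon\}$ points toward smaller $t$, i.e.\ it is $-e$, so $-\langle \nu, \Phi\rangle = +\langle e, \Phi\rangle$, matching the sign in the statement.

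The main obstacle I anticipate is precisely the measure-theoretic bookkeeping of the edge $\{G = w\} \cap \{t = t_0 - \varepsilon\}$ and the verification that $E_{w,\varepsilon}$ genuinely satisfies Definition \ref{def-almost-C1} for a.e.\ pair $(w,\varepsilon)$ — that is, that the ``two-faceted'' boundary does not create a non-negligible singular set where the two $C^1$ pieces meet. Everything else (the decomposition of the boundary, the identification of normals, the sign conventions) is routine once this negligibility is in hand; the cleanest route is to choose $w$ and $\varepsilon$ in the full-measure set where both Dubovicki\v{\i}'s theorem for $G|_{H_\varepsilon}$ and the slicing statement for $\{t = t_0 - \varepsilon\}$ hold simultaneously, using that a countable intersection of full-measure sets is full-measure.
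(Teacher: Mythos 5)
Your proposal is correct and follows essentially the same route as the paper: decompose $\partial\bigl(\{G>w\}\cap\{t<t_0-\varepsilon\}\bigr)$ into the lateral level-set piece and the flat slice, control the singular set by Dubovicki\u{\i} (for a.e.\ $w$) together with a Fubini-type slicing argument (for a.e.\ $\varepsilon$, to kill the edge $\{G=w\}\cap\{t=t_0-\varepsilon\}$), and then invoke the divergence theorem for sets with almost $C^1$-boundary, with the correct inner normal $-e$ on the top face. Your direct verification of Definition \ref{def-almost-C1} for the intersected set is, if anything, slightly cleaner than the paper's formal application of Proposition \ref{prop-1} to the merely Lipschitz function $(G-w)\wedge(t_0-\varepsilon-t)$, but it is the same argument in substance.
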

\begin{proof}
Notice that for $\H^1-$a.e. $w\in\R$ the level set $\{G>w\}$ has almost-$C^1$ boundary and fix 
such a value.
Let $S$ be the $\H^N$-negligible singular set of $\partial\{G>w\}$: by Fubini theorem, for 
$\H^1-$a.e. 
$\varepsilon>0$ the set $S\cap\{t=t_0-\varepsilon\}$ is in turn $\H^{N-1}-$negligible, 
and out of this set the unit normal is given $\H^N-$a.e. by $\nu$ in 
$\{ G = w \} \setminus \mathrm{Crit}\pr{G} \cap \{ t<t_0-\varepsilon \}$ and by $e$ in 
$\{ G > w \} \cap \{ t=t_0-\varepsilon \}$. Therefore, Proposition \ref{prop-1} applies 
with $n=N+1$, $\Omega=\R^{N+1}\setminus \left\{ \pr{x_0,t_0} \right\}$, 
\[
F(x,t)= (G(x,t)- w)\wedge (t-t_0+\varepsilon) ,
\]
$y=0$ and the set 
\begin{equation*}
\Sigma=(\partial\{ G > w \} \cap \{ t<t_0-\varepsilon \} \cap \mathrm{Crit}\pr{G} \bigr)
\cup \bigl(\{ G = w \} \cap \{ t=t_0-\varepsilon \}\bigr)
\end{equation*}
is $\H^N$-negligible.
\end{proof}

\medskip

The last result we need to prove Theorem \ref{th-1} is the coarea formula for Lipschitz functions. 
We refer to \cite[3.2.12]{federer1969geometric} or \cite{AmbrosioFuscoPallara}, Theorem 2.93 and 
formula (2.74) for the proof. 

\begin{theorem}[\sc Coarea formula for Lipschitz functions] \label{th-co}
Let $G:\R^n \rightarrow \R$ be a Lipschitz function, and let $g$ be a non-negative measurable function. Then
\begin{equation} \label{e-co} 
\int_{\R^n} g\pr{z}\abs{\nabla G \pr{z}}dz=
\int_{\R}\pr{\int_{\left\{ G=y \right\}}g\pr{z}d\H^{n-1} \pr{z}}dy.
\end{equation}
\end{theorem}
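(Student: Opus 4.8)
The plan is to prove \eqref{e-co} following the classical three-step scheme: first reduce the class of admissible $g$, then reduce $G$ from Lipschitz to $C^1$, and finally treat the $C^1$ case by a local change of variables, using Dubovicki\v{\i}'s Theorem~\ref{th-Dubo} to dispose of the critical set. Throughout, $L$ denotes the Lipschitz constant of $G$.

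\emph{Reduction in $g$.} Both sides of \eqref{e-co} are additive and monotone in $g\ge 0$, so by linearity on simple functions and the monotone convergence theorem it suffices to prove the identity for $g=\chi_E$ with $E$ a bounded Borel set, i.e.
\begin{equation*}
\int_{E}|\nabla G(z)|\,dz=\int_{\R}\H^{n-1}\bigl(E\cap\{G=y\}\bigr)\,dy .
\end{equation*}
For the right-hand side to be meaningful one must also check that $y\mapsto\H^{n-1}(E\cap\{G=y\})$ is Lebesgue measurable; I would get this from the Borel regularity of $\H^{n-1}$ together with inner and outer approximation of $E$ by compact and open sets.

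\emph{From Lipschitz to $C^1$.} By a Lusin-type approximation theorem for Lipschitz functions (see \cite{federer1969geometric} or \cite{AmbrosioFuscoPallara}), for every positive integer $j$ there is $G_j\in C^1(\R^n)$ with $\bigl|\{G\neq G_j\}\cup\{\nabla G\neq\nabla G_j\}\bigr|<1/j$. On $A_j:=\{G=G_j\}\cap\{\nabla G=\nabla G_j\}$ one has $|\nabla G|=|\nabla G_j|$ and $A_j\cap\{G=y\}=A_j\cap\{G_j=y\}$ for every $y$, so the $C^1$ case of the formula, proved in the last step and applied to the Borel set $E\cap A_j$, gives the desired identity with $E$ replaced by $E\cap A_j$. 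The error coming from $B_j:=\R^n\setminus A_j$ I would control by the \emph{coarea inequality}
\begin{equation*}
\int_{\R}\H^{n-1}\bigl(B\cap\{G=y\}\bigr)\,dy\le C\,|B|,\qquad B\ \text{Borel},\quad C=C(n,L),
\end{equation*}
proved by a Vitali covering argument, the point being that $G$ sends a ball of radius $\varrho$ into an interval of length at most $2L\varrho$. Applied with $B=E$ it shows the right-hand side of \eqref{e-co} is finite; applied with $B=E\cap B_j$ and letting $j\to\infty$ (using $|\nabla G|\le L$ on the bounded set $E$ for the left-hand side), it shows that both sides of the identity for $E$ are the limits of the corresponding quantities for $E\cap A_j$, so the whole statement is reduced to $G\in C^1(\R^n)$.

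\emph{The $C^1$ case.} Split $\R^n=Z\cup P$ with $Z=\{\nabla G=0\}$ and $P=\{\nabla G\neq 0\}$. On $Z$ the left-hand integrand vanishes, while Dubovicki\v{\i}'s Theorem~\ref{th-Dubo} with $m=k=1$ (so $s=n-1$) gives $\H^{n-1}(Z\cap\{G=y\})=0$ for a.e.\ $y$; hence $Z$ contributes nothing and $E$ may be replaced by $E\cap P$. I would cover $P$ by countably many open sets $U_\ell$ on each of which, after relabelling coordinates, $\partial_{x_n}G\neq 0$, so that $\Psi_\ell(x',x_n):=(x',G(x',x_n))$ is a $C^1$ diffeomorphism of $U_\ell$ onto an open subset of $\R^{n-1}\times\R$ with Jacobian $|\partial_{x_n}G|$, and $\{G=y\}\cap U_\ell$ is the $C^1$ graph $x_n=h_y(x')$ with area element $\sqrt{1+|\nabla h_y|^2}=|\nabla G|/|\partial_{x_n}G|$ on the graph. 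Splitting $E\cap P$ into countably many Borel pieces, each contained in one $U_\ell$, I would on each piece change variables by $\Psi_\ell$ in $\int|\nabla G|\,dz$, recognise the new integrand as the graph area element, and apply Fubini's theorem in $(x',y)$; this produces exactly $\int_{\R}\H^{n-1}\bigl(E\cap\{G=y\}\bigr)\,dy$ restricted to that piece. Summing over the pieces and over $\ell$, and adding the null $Z$-contribution, yields \eqref{e-co} for $\chi_E$, hence for every $g\ge 0$ by the first reduction. I expect the coarea inequality of the second step to be the main obstacle: it is the only ingredient requiring a genuine covering argument — and it is precisely what makes the measurability claim and the passage to the limit rigorous — whereas the usually delicate critical-set term is here dispatched cleanly, at the $C^1$ level, by Theorem~\ref{th-Dubo}.
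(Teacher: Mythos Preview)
The paper does not prove Theorem~\ref{th-co} at all: it is quoted as a known result, with the proof delegated to \cite[3.2.12]{federer1969geometric} and \cite[Theorem~2.93 and formula~(2.74)]{AmbrosioFuscoPallara}. So there is no ``paper's own proof'' to compare against; your proposal is an independent sketch of the classical argument.

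That said, your outline is essentially the proof found in those references: reduction to $g=\chi_E$ by monotone convergence, passage from Lipschitz to $C^1$ via the Lusin-type approximation of Lipschitz functions combined with the Eilenberg-type coarea inequality to control the error on the bad set, and finally the $C^1$ case handled by the implicit function theorem plus Fubini on the regular set. Your use of Theorem~\ref{th-Dubo} to kill the critical-set contribution at the $C^1$ level is a nice touch and consistent with the spirit of the paper, though the standard references usually bypass this by noting that on $\{\nabla G=0\}$ the left-hand integrand vanishes while the Eilenberg inequality (applied with a constant $L$ that can be taken arbitrarily small on the critical set after a further approximation) shows the right-hand side vanishes too. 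One small point to be careful with in Step~2: the identity $A_j\cap\{G=y\}=A_j\cap\{G_j=y\}$ is fine, but to transfer the $\H^{n-1}$-measure you also need that the $C^1$ coarea formula, applied to $G_j$ on the Borel set $E\cap A_j$, really computes $\H^{n-1}$ on the level sets of $G_j$ restricted to $E\cap A_j$; since $A_j$ is only Borel and not open, this is exactly what your $C^1$ step gives once you allow arbitrary Borel $E$, so the logic is sound. The sketch is correct.
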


\setcounter{equation}{0} 
\section{Proof of the mean value formulas and maximum principle}\label{sectionProof}

In this Section we give the proof of the mean value formulas and of the strong maximun principle.

\medskip

\begin{proof} {\sc of Theorem \ref{th-1}.} Let $\Omega$ be an open subset of $\R^{N+1}$, and let $u$ be a classical solution to $\L u = f$ in $\Omega$. Let $z_0=(x_0,t_0) \in \Omega$ and let $r_0>0$ be such that  $\overline{\Omega_{r_0}(z_0)} \subset \Omega$. We prove our claim by applying Proposition \ref{prop-1bis} with $G(z) = \Gamma(z_0; z)$ and $w = \frac{1}{r^N}$, where $r \in ]0,r_0]$ is such that the statement of Proposition \ref{prop-1bis} holds true with $w = \frac{1}{r^N}$, and $\varepsilon := \varepsilon_k$ for some monotone sequence $\big(\varepsilon_k\big)_{k \in \mathbb{N}}$ such that $\varepsilon_k \to 0$ as $k \to + \infty$ (see Figure 3).

\bigskip

\begin{center}
\begin{tikzpicture}
\clip (-.5,7.5) rectangle (6.7,2);
\shadedraw [top color=blue!10] (-2,6) rectangle (7,1); 
\begin{axis}[axis y line=middle, axis x line=middle, 
    xtick=\empty,ytick=\empty, 
    ymin=-1.1, ymax=1.1,
    xmin=-.2,xmax=1.8, samples=101, rotate= -90]
\addplot [black,line width=.7pt, domain=-.01:.01] {sqrt(.0001 - x * x)} node[above] {\hskip12mm $(x_0,t_0)$};
\addplot [black,line width=.7pt, domain=-.01:.01] {-sqrt(.0001 - x * x)};
\addplot [blue,line width=.7pt, domain=.001:1] {sqrt(- 2 * x * ln(x))}; 
\addplot [blue,line width=.7pt,domain=.001:1] {- sqrt(- 2 * x * ln(x))} 
node[below] { \hskip20mm $\Omega_r(x_0,t_0)$};
\end{axis}
\draw [<-,line width=.4pt] (2.8475,7) -- (2.8475,2);
\draw [red, line width=.6pt,] (-1,6) -- (6.7,6) node[below] { \hskip-18mm  $t = t_0 - \varepsilon_k$}; 
\path[draw,thick] (-.49,7.49) rectangle (6.69,2.01);
\end{tikzpicture} 

{\sc Fig.3}  - The set $\Omega_r(x_0,t_0) \cap \big\{ t < t_0 - \varepsilon_k \big\}$.
\end{center}

\bigskip 

For this choice of $r$, we set $v(z) := \Gamma(z_0;z) - \frac{1}{r^N}$, and we note that 
\begin{equation} \label{eq-div-L*}
\begin{split}
 u(z) \L^* v(z) - v(z) \L u(z) = 
 & \div_x \big( u(z) A(z) \nabla_x v(z) - v(z) A(z) \nabla_x u(z) \big) - \\
 & \div_x \big( u(z) v(z) b(z) \big) + \partial_t (u(z)v(z))
\end{split}
\end{equation}
for every $z \in \Omega \backslash \big\{z_0 \big\}.$ We then recall that $\L^* v = \frac{1}{r^N} 
\left( \div \, b - c \right)$ and $\L u  = f$ in $\Omega \backslash \big\{z_0 \big\}$. 
Then \eqref{eq-div-L*} can be written as follows
\begin{equation*}
 \frac{1}{r^N} \left( \div \, b(z) - c(z) \right) u(z) - v(z) f(z) = 
 \div \, \Phi (z), \qquad \Phi(z) := \big( u A \nabla_x v - v A \nabla_x u - uv b, uv \big)(z).
\end{equation*}
We then apply Proposition \ref{prop-1bis} to the set
$\Omega_r(z_0) \cap \left\{ t<t_0-\varepsilon_k \right\}$ and we find
\begin{equation} \label{eq-div-k}
\begin{split}
 \int_{\Omega_r(z_0) \cap \left\{ t<t_0-\varepsilon_k \right\}}
\!\!\!\!\!\!\!\!\!\!\!\!\!\!\!\!\!\!\!\!\!  \left( \tfrac{1}{r^N} \left( \div \, b(z) - c(z) \right) u(z) - v(z) f(z)\right) dz
= & \\ 
- \int_{\psi_r(z_0)\setminus \mathrm{Crit}\pr{\Gamma} \cap \left\{ t<t_0-\varepsilon_k \right\}}
\!\!\scp{\nu,\Phi} & d \H^{N} + \int_{\Omega_r(z_0) \cap \left\{ t=t_0-\varepsilon_k \right\}} 
\!\!\scp{e,\Phi}d\H^{N},
\end{split}
\end{equation}
where $\nu(z)=\tfrac{\nabla_{(x,t)} \Gamma(z_0,z) }{\abs{\nabla_{(x,t)} \Gamma(z_0,z)}}$ and 
$e=\pr{0,\ldots,0,1}$. We next let $k \to + \infty$ in the above identity. As $f$ is continuous on 
$\overline{\Omega_r(z_0)}$ and $v \in L^1({\Omega_r(z_0)})$, we find 
\begin{equation} \label{eq-div-1}
\begin{split}
  \lim_{k \to + \infty} \int_{\Omega_r(z_0) \cap \left\{ t<t_0-\varepsilon_k \right\}}
 &\left( \tfrac{1}{r^N} \left( \div \, b(z) - c(z) \right) u(z) - v(z) f(z)\right) dz = \\
& \int_{\Omega_r(z_0)} \left( \tfrac{1}{r^N} \left( \div \, b(z) - c(z) \right) u(z) - v(z) f(z)\right) dz.
\end{split}
\end{equation}
We next consider the last integral in the right hand side of \eqref{eq-div-k}. We have 
$\scp{e,\Phi} (z) = u(z) v(z)$, then 
\begin{equation} \label{eq-psir}
 \int_{\Omega_r(z_0) \cap \left\{ t=t_0-\varepsilon_k \right\}} \!\!\scp{e,\Phi}d\H^{N} =
 \int_{\mathcal{I}_r^k (z_0)} \!\! \!\! 
 u(x,t_0- \varepsilon_k) \left( \Gamma(x_0, t_0; x,t_0 - \varepsilon_k) - \frac{1}{r^N} \right) d x,
\end{equation}
where we have denoted
\begin{equation*} 
 \mathcal{I}_r^k (z_0) := \left\{ x \in \R^N \mid (x,t_0-\varepsilon_k) \in \overline{\Omega_r (z_0)} \right\}.
\end{equation*}
We next prove that the right hand side of \eqref{eq-psir} tends to $u(z_0)$ as $k \to + \infty$. Since 
$\Gamma$ is the fundamental solution to $\L$ we have
\begin{equation*} 
 \lim_{k \to + \infty}  \int_{\R^N} \!\! \!\! \Gamma(x_0, t_0; x,t_0 - \varepsilon_k)
 u(x,t_0- \varepsilon_k)  d x = u(x_0,t_0), 
\end{equation*}
then, being $u$ continuous on $\overline{\Omega_r (z_0)}$, we only need to show that 
\begin{equation} \label{eq-claim-2}
\lim_{k \to + \infty} \H^{N} \left( \mathcal{I}_r^k (z_0) \right) = 0, \qquad
 \lim_{k \to + \infty}  \int_{\R^N \backslash \mathcal{I}_r^k (z_0)} \!\! \!\! 
 \Gamma(x_0, t_0; x,t_0 - \varepsilon_k) d x = 0.
\end{equation}
With this aim, we note that the upper bound \eqref{upper-bound} and \eqref{eq-fundsol+} imply
\begin{equation*} 
 \mathcal{I}_r^k (z_0) \subset \left\{ x \in \R^N \mid | x - x_0|^2 \le 4 \Lambda^+ \varepsilon_k 
 \left( \log \left(C^+ r^N \right) - \tfrac{N}{2} \log (4 \pi \Lambda^+  \varepsilon_k)  \right) \right\}.
\end{equation*}
The first assertion of \eqref{eq-claim-2} is then a plain consequence of the above inclusion. In order to prove the second statement in \eqref{eq-claim-2}, we rely on Lemma \ref{lem-localestimate*}. We let $r_0 := \min (r, r^*)$, so that
\begin{equation*} 
 \Omega_{r_0}^*(z_0) \subset \Omega_{r_0}(z_0) \subset \Omega_r(z_0),
\end{equation*}
thus 
\begin{equation*} 
\begin{split}
 \R^N \backslash \mathcal{I}_r^k (z_0) \subset & \left\{ x \in \R^N \mid Z^*(x,t_0- \varepsilon_k;x_0,t_0) \le \tfrac{2}{r_0^N} \right\} \\
 = & \Big\{ x \in \R^N \mid \langle A(z_0) (x - x_0), x-x_0 \rangle \\
 & \quad \ge - 4 \varepsilon_k \left( \log \left( \tfrac{2}{r_0^N} \right) + \tfrac{1}{2} \log (\det A(z_0) ) + \tfrac{N}{2} \log (4 \pi \varepsilon_k) \right) \Big\}.
\end{split} 
\end{equation*}
By using again \eqref{upper-bound}, the above inclusion, and the change of variable $x = x_0 + 2 \sqrt{\Lambda^+ \varepsilon_k} \, \xi$, we find
\begin{equation*} 
\begin{split}
  \int_{\R^N \backslash \mathcal{I}_r^k (z_0)} & \!\! \!\! \Gamma(x_0, t_0; x,t_0 - \varepsilon_k) d x \le C^+ \int_{\R^N \backslash \mathcal{I}_r^k (z_0)} \!\! \!\! \Gamma^+ (x_0, t_0; x,t_0 - \varepsilon_k) d x \\ 
  & \le \frac{C^+}{\pi^{N/2}} \int_{ \left\{ \langle A(z_0) \xi, \xi \rangle \ge - \tfrac{1}{\Lambda^+} \left( \log \left( \tfrac{2}{r_0^N} \right) + \tfrac{1}{2} \log (\det A(z_0) ) + \tfrac{N}{2} \log (4 \pi \varepsilon_k) \right) \right\} }
 \!\! \exp\left( - |\xi|^2 \right) d \xi.
\end{split} 
\end{equation*}
The second assertion of \eqref{eq-claim-2} then follows. Thus, we have shown that
\begin{equation} \label{eq-div-2}
 \lim_{k \to + \infty} \int_{\Omega_r(z_0) \cap \left\{ t=t_0-\varepsilon_k \right\}} \scp{e,\Phi} d\H^{N} = u(z_0).
\end{equation}
We are left with the first integral in the right hand side of \eqref{eq-div-k}. We preliminarily note that its limit, as $k \to + \infty$, does  exist. Moreover, for every $z \in \psi_r(z_0)$ we have $v(z) = 0$, then $\Phi(z) = \big( u (z) A(z) \nabla_x v(z), 0 \big)$, so that 
\begin{equation*} 
 \int_{\psi_r(z_0) \setminus \mathrm{Crit}\pr{\Gamma} \cap \left\{ t<t_0-\varepsilon_k \right\}}
\!\!\scp{\nu,\Phi}d \H^{N} = \int_{\psi_r(z_0) \setminus \mathrm{Crit}\pr{\Gamma} \cap \left\{ t<t_0-z\varepsilon_k \right\}} \!\! u(x,t) K (z_0;z) d \H^{N},
\end{equation*}
where 
\begin{equation*}
	K (z_0; z) = \frac{\langle A(z) \nabla_x \Gamma(z_0;z), \nabla_x \Gamma(z_0;z) \rangle }
	{|\nabla_{(x,t)}\Gamma(z_0; z)|}
\end{equation*}
is the kernel defined in \eqref{e-kernels}. Note that $K$ is non-negative and, if we consider the function 
$u = 1$ and we let $k \to + \infty$, we find
\begin{equation*} 
 \lim_{k \to + \infty} \int_{\psi_r(z_0) \setminus \mathrm{Crit}\pr{\Gamma} \cap 
 \left\{ t<t_0-\varepsilon_k \right\}}
\!\! K (z_0; z) d \H^{N} = \int_{\psi_r(z_0)\setminus \mathrm{Crit}\pr{\Gamma}} \!\!\! 
K (z_0;z) d \H^{N} < + \infty.
\end{equation*}
Thus, if $u$ is a classical solution to $\L u = 0$, we obtain
\begin{equation} \label{eq-div-3i}
 \lim_{k \to + \infty} \int_{\psi_r(z_0) \setminus \mathrm{Crit}\pr{\Gamma} \cap 
 \left\{ t<t_0-\varepsilon_k \right\}}
\!\!\scp{\nu,\Phi}d \H^{N} = \int_{\psi_r(z_0)\setminus \mathrm{Crit}\pr{\Gamma}} \!\!\! 
K (z_0;z) u(z)  d \H^{N}.
\end{equation} 
We recall that Dubovicki\v{\i}'s theorem implies that 
$\H^{N} \left(\psi_r(z_0) \cap \mathrm{Crit}\pr{\Gamma}\right) = 0$ for $\H^{1}$ almost every 
$r$, so that we can equivalently write
\begin{equation} \label{eq-div-3}
 \lim_{k \to + \infty} \int_{\psi_r(z_0) \setminus \mathrm{Crit}\pr{\Gamma} \cap 
 \left\{ t<t_0-\varepsilon_k \right\}}
\!\!\scp{\nu,\Phi}d \H^{N} = \int_{\psi_r(z_0)} \!\!\! K (z_0;z) u(z)  d \H^{N}.
\end{equation} 
The proof of the first assertion of Theorem \ref{th-1} then follows by using \eqref{eq-div-1}, \eqref{eq-div-2} and \eqref{eq-div-3} in \eqref{eq-div-k}. 

The proof of the second assertion of Theorem \ref{th-1} is a direct consequence of the first one and of the 
coarea formula stated in Theorem \ref{th-co}. Indeed, fix a positive $r$ as above, multiply by $\frac{N}{r^N}$ 
and integrate over $]0,r[$. We find
\begin{equation} \label{e-meanvalue-step1}
\begin{split}
	\frac{N}{r^N} \int_0^r \varrho^{N-1} u(z_0) d \varrho = & 
	\frac{N}{r^N} \int_0^r \varrho^{N-1} \bigg(\int_{\psi_\varrho(z_0)} 
	K (z_0;z) u(z) \, d \H^{N} (x,t) \bigg) d \varrho\, 
	\\
	& + \frac{N}{r^N} \int_0^r \varrho^{N-1} \bigg( \int_{\Omega_\r(z_0)} f (z) 
	\left( \tfrac{1}{r^N} - \Gamma(z_0;z) \right) dz \bigg) d \varrho  
	\\
	& + \frac{N}{r^N} \int_0^r \frac{1}{\varrho} \bigg( 
	\int_{\Omega_\r (z_0)} \left( \div \, b(z) - c(z) \right) u(z) \, dz \bigg) d \varrho.
\end{split}
\end{equation}
The left hand side of the above equality equals $u(z_0)$, while the last two terms agree with the last two terms appearing in the statement of Theorem \ref{th-1}. In order to conclude the proof we only need to show that 
\begin{equation} \label{e-meanvalue-step2}
\begin{split}
	  \int_0^r \varrho^{N-1}  \bigg(\int_{\left\{\Gamma(z_0;z) = 
	  \tfrac{1}{\varrho^N}\right\} } & K (z_0;z) u(z) \, d \H^{N} (z) \bigg) d \varrho 
	  \\
	& = \frac{1}{N} \int_0^{r} \r^{N-1}\int_{\Omega_\r (z_0)} M (z_0;z) u(z) dz.
\end{split}
\end{equation}
With this aim, we substitute $y = \frac{1}{\varrho^N}$ in the left hand side of \eqref{e-meanvalue-step2} and we recall the definition of the kernel $K$. We find 
\begin{equation} \label{e-meanvalue-step3}
\begin{split}
	  \int_0^r \varrho^{N-1} & \bigg( \int_{\left\{\Gamma(z_0;z) = \tfrac{1}{\varrho^N}\right\} } 
	  \frac{\langle A(z) \nabla_x \Gamma(z_0;z), \nabla_x \Gamma(z_0;z) \rangle }
	{|\nabla_{(x,t)}\Gamma(z_0;z)|} u(z) \, d \H^{N} (z) \bigg) d \varrho \\
     & = \frac{1}{N}
    \int_{\frac{1}{r^N}}^{+ \infty} \frac{1}{y^{2}} \bigg(\int_{\left\{\Gamma(z_0;z) = y\right\} } 
	\frac{\langle A(z) \nabla_x \Gamma(z_0; z), \nabla_x \Gamma(z_0;z) \rangle }
	{|\nabla_{(x,t)}\Gamma(z_0;z)|}u(z) \, d \H^{N} (z) \bigg) d y \\
	& =\frac{1}{N} \int_{\frac{1}{r^N}}^{+ \infty} \bigg(\int_{\left\{\Gamma(z_0;z) = y\right\} } 
	\frac{\langle A(z) \nabla_x \Gamma(z_0;z), \nabla_x \Gamma(z_0;z) \rangle }
	{\Gamma^2(z_0;z) {|\nabla_{(x,t)}\Gamma(z_0;z)|}} u(z) \, d \H^{N} (z) \bigg) d y.
\end{split}
\end{equation}
We conclude the proof of \eqref{e-meanvalue-step2} by applying the coarea formula stated in Theorem 
\ref{th-co}.
\end{proof}

\medskip

\begin{proof} {\sc of Proposition \ref{prop-smp}.}  We prove our claim under the additional assumption $\div \, b \ge 0$. At the end of the proof we show that this assumption is not restrictive. 

We first note that, as a direct consequence of our assumption $c=0$, we have that $\L \, 1 = 0$, then Theorem \ref{th-1} yields
 \begin{equation*} 
	\frac{1}{\varrho^N} \int_{\Omega_\varrho(z_1)} M (z_1; z) \, dz + 
	\frac{N}{\r^N} \int_0^{\r} \Big(\frac{1}{s} \int_{\Omega_s (z_1)} \div \, b(z) \, dz \Big) d s = 1 
\end{equation*}
for every $z_1 \in \Omega$ and $\r >0$ such that $\overline{\Omega_\varrho(z_1)} \subset \Omega$.

We claim that, if $u(z_1) = \max_\Omega u$, then
\begin{equation} \label{eq-claim-smp}
 u(z) = u(z_1) \qquad \text{for every} \quad z \in \overline{\Omega_\varrho(z_1)}.
\end{equation}
By using again Theorem \ref{th-1} and the above identity we obtain
 \begin{align*} 
	0 = & \frac{1}{\varrho^N} \int_{\Omega_\varrho(z_1)} M (z_1; z) \big((u(z)- u(z_1)\big) \, dz 
	\\
	& + \frac{N}{\r^N} \int_0^{\r} \Big(\frac{1}{s} \int_{\Omega_s (z_1)} \div \, b(z) 
	\big((u(z)- u(z_1)\big) \, dz \Big) d s 
	\\
	& + \frac{N}{\r^N} \int_0^{\r} \left(s^{N-1}
	\int_{\Omega_s (z_1)} f (z) \left( \tfrac{1}{s^N} - \Gamma(z_1;z) \right) dz \right) d s \le 0,
\end{align*}
since $f \ge 0$, $\div \, b \ge 0$ and $u(z) \le u(z_1)$, being $u(z_1) = \max_{\Omega} u$. We have also used the fact that $M(z_1;z) \ge 0$ and $\Gamma(z_1;z) \ge \tfrac{1}{s^N}$ for every $z \in \Omega_s(z_1)$. Hence, $M (z_1; z) \big((u(z)- u(z_1)\big)=0$ for $\H^{N+1}$ almost every $z \in \Omega_\r(z_1)$. As already noticed, Dubovicki\v{\i}'s theorem implies that $\H^{N} \left( \psi_s (z_1) \cap \mathrm{Crit}\pr{\Gamma} \right) = 0$, for almost every $s \in ]0,\r]$, then $M (z_1; z) \ne 0$ for $\H^{N+1}$ almost every $z \in \Omega_\r(z_1)$. As a consequence $u(z)= u(z_1)$ for $\H^{N+1}$ almost every $z \in \Omega_\r(z_1)$, and the claim \eqref{eq-claim-smp} follows from the continuity of $u$.

We are in position to conclude the proof of Proposition \ref{prop-smp}. Let $z$ be a point of $\AS  ( \O )$, and let $\gamma: [0,T] \to \O$ be an $\L$--admissible path such that $\g(0)= z_0$ and $\g(T) = z$. We will prove that $u(\gamma(t)) = u(z_0)$ for every $t \in [0,T]$. Let
\begin{equation*}
 I := \big\{ t \in [0,T] \mid u(\gamma(s)) = u(z_0) \ \text{for every} \ s \in[0,t] \big\}, \qquad \overline t := \sup I.
\end{equation*}
Clearly, $I \ne \emptyset$ as $0 \in I$. Moreover $I$ is closed, because of the continuity of $u$ and $\gamma$, then $\overline t \in I$. We now prove by contradiction that $\overline t = T$. 

Suppose that $\overline t < T$. Let $z_1 := \gamma(\overline t)$ and note that $z_1 \in \Omega$, $u(z_1) = \max_\Omega u$. We aim to show that there exist positive constants $r_1$ and $s_1$ such that $\overline{\Omega_{r_1}(z_1)} \subset \O$ and 
\begin{equation} \label{eq-claim2-smp}
 \gamma(\overline t + s) \in \Omega_{r_1}(z_1)\quad \text{for every} \quad  s \in [0, s_1[. 
\end{equation}
As a consequence of \eqref{eq-claim-smp} we obtain $u(\gamma(\overline t + s)) = u(z_1) = u(z_0)$ for every $s \in [0, s_1[$, and this contradicts the assumption $\overline t < T$. 

The proof of \eqref{eq-claim2-smp} is a consequence of Lemma \ref{lem-localestimate*}. It is not restrictive to assume that  $r_1 \le r^*$, then it is sufficient to show that there exists a positive $s_1$ such that 
\begin{equation} \label{eq-claim4-smp}
 \gamma(\overline t + s) \in \Omega^*_{r_1}(z_1) \quad \text{for every} \quad  s \in [0, s_1[. 
\end{equation}
Recall the definition of $\gamma(\overline t + s) = (x(\overline t + s), t(\overline t + s))$. We have $\gamma(\overline t) = z_1 = (x_1, t_1), t(\overline t + s) = t_1-s$ and, for every positive $s$ 
\begin{equation*}
\begin{split}
 |x(s + \overline t) - x_1 | & = \left| \int_0^s \dot x(\overline t + \sigma) d \sigma \right| \le \int_0^s \left| \dot x(\overline t + \sigma) \right| d \sigma \\
 & \le \left( \int_0^s \left| \dot x(\overline t + \sigma) \right|^2 d\sigma \right)^{1/2} s^{1/2} \le \|\dot x \|_{L^2([0,T])} \sqrt{s},
\end{split}
\end{equation*}
then
\begin{equation*}
\langle A^{-1}(z_1 )(x(\overline t + s)-x_1), x(\overline t + s)-x_1 \rangle \le 
s \cdot \| A^{-1}(z_1)\| \cdot \|\dot x \|_{L^2([0,T])}^2.
\end{equation*}
By using the above inequality in \eqref{eq-Omega_r*-exp} we see that there exists a positive constant $s_1$ such that \eqref{eq-claim4-smp} holds. This proves \eqref{eq-claim2-smp}, and then $u(z) = u(z_0)$ for every $z \in \AS  ( \O )$. By the continuity of $u$ we conclude that $u(z) = u(z_0)$ for every $z \in \overline{\AS  ( \O )}$. Eventually, since $u$ is constant in $\overline{\AS  ( \O )}$ and $c= 0$, we conclude that $\L u = 0$.

We finally prove that the additional assumption $\div \, b \ge 0$ is not restrictive. Let $k$ be any given constant such that $k>\Lambda$, where $\Lambda$ is the quantity appearing in \eqref{e-up}, recall that $z_0 = (x_0,t_0)$ and define the function
\begin{equation}
 v(y,t):= u\left( e^{-k(t-t_0)}y, t\right), \qquad (y,t) \in \widehat \Omega,
\end{equation}
where $(y,t) \in \widehat \Omega$ if, and only if $\left( e^{-k(t-t_0)}y, t\right) \in \Omega$. Then $v$ is a solution to 
\begin{equation*} 
	\widehat \L v (y,t)  := 
	\div \left( \widehat A (y,t) \nabla_y v(y,t) \right) + \langle \widehat b(y,t) + k y, \nabla_y v(y,t)\rangle - 
	\, \tfrac{\p v}{\p t}(y,t) = f\left( e^{-k(t-t_0)}y, t\right),
\end{equation*}
where $\widehat A (y,t) = \left( \widehat a_{ij} (y,t) \right)_{i,j=1, \dots, N}$, $\widehat b(y,t) = \left( \widehat b_1(y,t), \dots, \widehat b_N(y,t) \right)$, are defined as $\widehat a_{ij} (y,t) = e^{-2k(t-t_0)} a_{ij} \left( e^{-k(t-t_0)}y, t\right), \widehat b_{j} (y,t) = e^{-k(t-t_0)} b_{j} \left( e^{-k(t-t_0)}y, t\right)$, for $i,j=1, \dots, N$.  Note that from the assumption \eqref{e-up} it follows that $|\div \, b| \le N \Lambda$, then 
\begin{equation*} 
	\div \left( \widehat b(y,t) + k y \right) \ge N \left(k - \Lambda e^{-2k(t-t_0)} \right).
\end{equation*}
In particular, there exists a positive $\delta$, depending on $k$ and $\Lambda$, such that the right hand side of above expression is non-negative as $t \ge t_0 - \delta$. Then, if we set $\widehat t_0 := t_0- \delta$, we have  
\begin{equation*} 
	\div \left( \widehat b(y,t) + k y \right) \ge 0 \qquad \text{for every} \quad (y,t) \in \widehat \Omega 
	\cap \big\{t \ge \widehat t_0 \big\}.
\end{equation*} 
We also note that $\widehat \L$ satisfies the same assumptions as $\L$, with a possibily different constant $\widehat \Lambda$, in every set of the form $\widehat \Omega \cap \big( \R^N \times I \big)$, where $I$ is any bounded open interval of $\R$. We then apply the above argument to prove that, if $v$ reaches its maximum at some point $(y_0,t_0) \in \widehat \Omega$, then it is constant in its propagation set in $\widehat \Omega \cap \big\{t \ge \widehat t_0 \big\}$. Note that $u$ reaches its maximum at some point $(x,t)$ if, and only if, $v$ reaches its maximum at $\left( e^{k(t-t_0)}x, t\right)$. Moreover, $(x(s), t-s)$ is an admissible curve for $\L$ if, and only if, $(e^{-k(t-s-t_0)}x(s), t-s)$ is an admissible curve for $\widehat \L$. We conclude that
\begin{equation*}
u(z) = u(z_0) \qquad \text{for every} \ z \in \overline {\AS  ( \O )} \cap \big\{t \ge \widehat t_0 \big\}.
\end{equation*}
We then repeat the above argument. Assume that $u$ reaches its maximum at some point $\left( \widehat x_0, \widehat t_0\right)$, we define a new function $\widehat v(y,t):= u\left( e^{-k(t-\hat t_0)}y, t\right)$ and we find a new constant $\widehat t_1 := t_0 - 2 \delta$ such that 
\begin{equation*}
u(z) = u(z_0) \qquad \text{for every} \ z \in \overline {\AS  ( \O )} \cap \big\{t \ge \widehat t_1 \big\}.
\end{equation*}
As we can use the same constant $\delta$ at every iteration, we conclude that the above identity holds for every $z \in \overline {\AS  ( \O )}$.
\end{proof}

\medskip

\begin{proof} {\sc of Proposition \ref{prop-smp1}.}  Let $k$ be a constant such that $\div \, b - c -k \ge 0$ and note that the function $v(x,t) := e^{k t}u(x,t)$ is a non negative solution to the equation 
\begin{equation*}
 \L v(x,t) + k v(x,t) =  e^{k t}f(x,t), \qquad (x,t) \in \Omega.
\end{equation*}
Then Theorem \ref{th-1} yields

 \begin{align*} 
	0 = & \frac{1}{\varrho^N} \int_{\Omega_\varrho(z_1)} M (z_1; z) v(z) \, dz 
	\\
	& + \frac{N}{\r^N} \int_0^{\r} \Big(\frac{1}{s} \int_{\Omega_s (z_1)} \big(\div \, b(z) - c(z) - k \big) 
	v(z) \, dz \Big) d s 
	\\
	& + \frac{N}{\r^N} \int_0^{\r} \left(s^{N-1}
	\int_{\Omega_s (z_1)} e^{k t} f (z) \left( \tfrac{1}{s^N} - \Gamma(z_1;z) \right) dz \right) d s \le 0,
\end{align*}
for every $z_1 \in \Omega$ and $\r >0$ such that $\overline{\Omega_\varrho(z_1)} \subset \Omega$. Here we have sued the facts that
$f \ge 0$, and $\div \, b - c -k \ge 0$. By following the same argument used in the proof of Proposition \ref{prop-smp} we find that $v \ge 0$ in $\overline {\AS  ( \O )}$. This concludes the proof of Proposition \ref{prop-smp1}.
\end{proof}

\medskip

\begin{proof} {\sc of Proposition \ref{prop-2}.}
Let $m$ be a positive integer, and let $u$ be a solution to $\L u = f$ in $\Omega \subset \R^{N+1}$. As said in the Introduction, we set
\begin{equation*}
 \widetilde u(x,y,t) := u(x,t), \qquad \widetilde f(x,y,t) := f(x,t), 
\end{equation*}
for every $(x,y,t) \in \R^{N}\times \R^{m} \times \R$ such that $(x,t) \in \Omega$, and we note that 
\begin{equation*}
 \widetilde \L \ \widetilde u(x,y,t) = \widetilde f(x,y,t) 
 \qquad \widetilde \L  := \L + \sum_{j=1}^{m}\tfrac{\partial^2}{\partial y_j^2}.
\end{equation*}
Moreover, the function
\begin{equation} \label{eq-tildegamma}
 \widetilde \Gamma(x_0, y_0, t_0 ;x,y,t) := \Gamma (x_0, t_0 ;x,t) \cdot
 \frac{1}{(4 \pi (t_0-t))^{m/2}}\exp \left( \frac{-|y_0-y|^2}{4(t_0-t)} \right)
\end{equation}
is a fundamental solution of $\widetilde \L$. We then use $\widetilde \Gamma$ to represent the solution $u$ in accordance with Theorem \ref{th-1} as follows
\begin{equation*}
\begin{split}
 u(z_0) = & \, \widetilde u(x_0,y_0,t_0) =
\frac{1}{r^{N+m}} \int_{\widetilde\Omega_r(x_0,y_0,t_0)} \! \! \! \! 
\widetilde M(x_0,y_0,t_0; x,y,t) u(x,t) \, dx \, dy\,  dt  \\
 & +\frac{N+m}{r^{N+m}} \int_0^{r} \left(\r^{N+m-1}
	\int_{\widetilde\Omega_\r(x_0,y_0,t_0)} \! \! \! \! 
	f (x,t) \left( \tfrac{1}{\r^{N+m}} - \widetilde \Gamma(x_0,y_0,t_0;x,y,t) \right) \, dx \, dy\,  dt \right) d \r \\
 & + \frac{N+m}{r^{N+m}} \int_0^{r} \left(\frac{1}{\r}
	\int_{\widetilde\Omega_\r(x_0,y_0,t_0)} \! \! \! \! 
	\left( \div \, b(x,t) - c(x,t) \right) u(x,t) \, dx \, dy\,  dt \right) d \r.
\end{split}
\end{equation*}
where $\widetilde\Omega_r(x_0,y_0,t_0)$ is the parabolic ball relevant to $\widetilde \Gamma$ and
\begin{equation*} 
    \widetilde M(x_0,y_0,t_0; x,y,t) = M(x_0,t_0;x,t) + \frac{|y_0-y|^2}{4(t_0-t)^2}. 
\end{equation*}
The proof is accomplished by integrating the above identity with respect to the variable $y$.
\end{proof}

\setcounter{equation}{0} 
\section{Proof of the Harnack inequalities}\label{sectionHarnack}

In this Section we use the mean value formula stated in Proposition \ref{prop-2} to give a simple proof of the parabolic Harnack inequality.

\medskip

\begin{proof} {\sc of Proposition \ref{prop-Harnack}.} We first prove our claim under the additional assumption that $\div \, b - c = 0$. This assumption simplifies the proof as in this case we only need to use the first integral in the representation formula given in Proposition \ref{prop-2}. It will be removed at the end of the proof.

Let $m \in \mathbb{N}$ with $m > 2$, let $\Omega$ be an open subset of $\R^{N+1}, z_0 \in \Omega$ and $r>0$ such that $\Omega_{4r}^{(m)}(z_0) \subset \Omega$. We claim that there exist four positive constants $r_0, \vartheta, M^+, m^-$ such that the following assertions hold for every $r \in ]0,r_0]$.
\begin{description}                                                                                                                                                                                                                                                                                                                                                                                                              
\item[{\it i)}] $K^{(m)}_{r}(z_0) \ne \emptyset$;
\item[{\it ii)}] $M_{\vartheta r}^{(m)} (z; \zeta) \le M^+$ for every $\zeta \in \Omega_{\vartheta r}^{(m)}(z)$;
\item[{\it iii)}] $\Omega_{\vartheta r}^{(m)}(z) \subset \Omega_{4r}^{(m)}(z_0) \cap \big\{\tau \le t_0 - \frac{r^2}{4 \pi \lambda^{N/(N+m)}}\big\}$ for every $z \in K^{(m)}_{r}(z_0)$;
\item[{\it iv)}] $M_{5 r}^{(m)} (z_0; \zeta) \ge m^-$ for every $\zeta = (\xi, \tau) \in \Omega_{4 r}^{(m)}(z_0)$ such that $\tau \le t_0 - \frac{r^2}{4 \pi \lambda^{N/(N+m)}}$.                                                                                                                                                                                                                                    \end{description}
By using Proposition \ref{prop-2} and the above claim it follows that, for every $z \in K^{(m)}_{r}(z_0)$, it holds
\begin{equation} \label{e-core-h}
\begin{split}
 u(z) = & \frac{1}{(\vartheta r)^{N+m}} \int_{\Omega^{(m)}_{\vartheta r}(z)} 
 \! \! \! \! \! \! M_{\vartheta r}^{(m)} (z; \zeta) u(\zeta) \, d\zeta  \\
 & (\text{by {\it ii}}) \le \frac{M^+}{(\vartheta r)^{N+m}} 
 \int_{\Omega^{(m)}_{\vartheta r}(z)} \!\!\! u(\zeta) \, d\zeta \\
& (\text{by {\it iii}}) \le \frac{M^+}{(\vartheta r)^{N+m}} 
 \int_{\Omega_{4r}^{(m)}(z_0) \cap \big\{\tau \le t_0 - \frac{r^2}{4 \pi \lambda^{N/(N+m)}}\big\}} 
 \!\!\! u(\zeta) \, d\zeta \\
 & (\text{by {\it iv}}) \le \frac{M^+}{m^- (\vartheta r)^{N+m}} 
 \int_{\Omega_{5r}^{(m)}(z_0)} \! \! \! \! \! \! M_{5 r}^{(m)} (z_0; \zeta)u(\zeta) \, d\zeta = 
 \frac{5^{N+m}M^+}{\vartheta^{N+m} m^-} u(z_0).
\end{split}
\end{equation}
This proves Proposition \ref{prop-Harnack} with $C_K := \frac{5^{N+m}M^+}{ \vartheta^{N+m}m^-}$.

\medskip

We are left with the proof of our claims. We mainly rely on Lemma \ref{lem-localestimate*}, applied to the function $\widetilde \Gamma$ introduced in \eqref{eq-tildegamma}. In the sequel we let $r^*$ be the constant appearing in Lemma \ref{lem-localestimate*} and relative to $\widetilde \Gamma$, and in accordance with \eqref{eq-Omega_r*}, 
\begin{equation} \label{eq-Omega_rm*}
 \Omega_r^{(m)*}(z_0) := \bigg\{ z \in \R^{N+1} \mid (4 \pi (t_0-t))^{-m/2}Z^*(z;z_0) \ge \frac{2}{r^{N+m}} \bigg\}.
\end{equation}
Moreover, we choose $r_0 := r^*/2$. 

\begin{center}
\begin{tikzpicture}
\clip (-.52,7.02) rectangle (6.52,1.78);
\path[draw,thick] (-.5,7) rectangle (6.5,1.8);
\begin{axis}[axis y line=none, axis x line=none, 
    xtick=\empty,ytick=\empty, 
    ymin=-1.1, ymax=1.1,
    xmin=-.2,xmax=1.8, samples=101, rotate= -90]
\addplot [black,line width=1pt, domain=-.01:.01] {sqrt(.0001 - x * x)} node[above] {$z_0$};
\addplot [black,line width=1pt, domain=-.01:.01] {-sqrt(.0001 - x * x)};
\addplot [lblue,line width=.7pt,domain=.001:.051] {sqrt(- 3 * x * ln(9*x)};
\addplot [lblue,line width=.7pt,domain=.001:.051] {- sqrt(- 3 * x * ln(9*x))};
\addplot [blue,line width=.7pt, domain=.001:1] {sqrt(- 2 * x * ln(x))}; 
\addplot [blue,line width=.7pt,domain=.001:1] {- sqrt(- 2 * x * ln(x))} node[below=-10pt] { \hskip35mm $\Omega^{(m)}_{4r}(z_0)$};
\addplot [ddddblue,line width=.7pt,domain=.051:.1111] {sqrt(- 3 * x * ln(9*x)};
\addplot [ddddblue,line width=.7pt,domain=.051:.1111] {- sqrt(- 3 * x * ln(9*x))};
\addplot [red,line width=.7pt,domain=.0001:.0625,below=15pt,right=9pt] {sqrt(- 4 * x * ln(16*x)};
\addplot [red,line width=.7pt,domain=.0001:.0625,below=15pt,right=9pt] {- sqrt(- 4 * x * ln(16*x))};
\addplot [black,line width=1pt, domain=-.01:.01,below=15pt,right=9pt] {sqrt(.0001 - x * x)} node[below=-3pt] {$z$};
\addplot [black,line width=1pt, domain=-.01:.01,below=15pt,right=9pt] {-sqrt(.0001 - x * x)} node[below=15pt,right=10pt] {\hskip-20mm \color{red} $\Omega^{(m)}_{\vartheta r}(z)$};
\end{axis}
\draw [ddddblue,line width=.7pt] (1.92,6) -- node [below=5pt] { \hskip25mm $K^{(m)}_r(z_0)$} (3.75,6);
\end{tikzpicture} 

{\sc \qquad Fig.4}  - The inclusion \emph{(iii)}.
\end{center}

\medskip

\noindent {\it Proof of i)} \ By the definition \eqref{e-Kr} of $K^{(m)}_{r}(z_0)$ we only need to show that there exists at least a point $(x,t) \in \overline{\Omega^{(m)}_{r}(z_0)}$ with $t \le t_0 - \frac{1}{4 \pi \lambda^{N/(N+m)}} \,r^2$. From Lemma \ref{lem-localestimate*} it follows that $\Omega_r^{(m)*}(z_0) \subset \Omega^{(m)}_r(z_0)$, then we only need to show that that the point $\big(x_0, t_0 - \frac{1}{4 \pi \lambda^{N/(N+m)}} \,r^2\big)$ belongs to $\overline{\Omega^{(m)}_{r}(z_0)}$. In view of \eqref{eq-fundsol-Z*}, this is equivalent to $\det A(z_0)\ge \lambda^N/4$, which directly follows from the parabolicity assumption \eqref{e-up}.

\medskip

\noindent {\it Proof of ii)} \ We first note that \eqref{eq-fundsol+} and the defintion of $N_{\vartheta r}$ directly give
\begin{equation} \label{eq-boundN}
	N_{\vartheta r} (z;\z) \le 2 \sqrt{t-\tau}\sqrt{\log\left(
	\tfrac{C^+ (\vartheta r)^{N+m}}{ (\Lambda^+)^{N/2}(4 \pi (t-\tau))^{(N+m)/2}} \right)} = 2 \sqrt{t-\tau}
	\sqrt{C_1 + \tfrac{N+m}{2}\log\left( \tfrac{\vartheta^2 \, r^2 }{t-\tau} \right)},
\end{equation}
where $C_1$ is a positive constant that only depends on $\L$. Moreover, Lemma \ref{lem-localestimategradient} implies that there exists a positive constant $M_0$, only depending on the operator $\L$, such that 
\begin{equation*} 
 M(z,\z) \le M_0 \left( \frac{|x-\xi|^2}{(t-\tau)^2} + 1 \right), \qquad \text{for every} \
 \zeta \in \Omega_{\vartheta r}^{(m)}(z).
\end{equation*}
Moreover, Lemma \ref{lem-localestimate*} implies that there exists another positive constant $M_1$ such that
\begin{equation} \label{eq-boundM}
 M(z,\z) \le M_1 \left( \frac{1}{t-\tau} \log\left( \frac{\vartheta^2 \, r^2}{t-\tau} \right) + 1 \right), 
 \qquad \text{for every} \
 \zeta \in \Omega_{\vartheta r}^{(m)}(z).
\end{equation}
We point out that the constants $C_1$ and $M_1$ depend neither on the choice of $\vartheta \in ]0,1[$, that will be specified in the following proof of the point {\it iii)}, nor on the choice of $r \in ]0,r_0[$. By using \eqref{eq-boundN} and \eqref{eq-boundM} we conclude that there exists a positive constant $M_2$, that only depends on $\L$ and on $m$, such that
\begin{equation*} 
  M_{\vartheta r}^{(m)} (z; \zeta) \le M_2 (t-\tau)^{m/2} 
  \left( 1 + \left| \log\left( \tfrac{\vartheta^2 \, r^2 }{t-\tau} \right) \right| \right)^{m/2} 
  \left( 1 + \tfrac{1}{t-\tau} \left| \log\left( \tfrac{\vartheta^2 \, r^2 }{t-\tau} \right) \right| \right) 
\end{equation*}
for every $\zeta \in \Omega_{\vartheta r}^{(m)}(z)$. The right hand side of the above inequality is bounded whenever $m>2$, uniformly with respect to $r \in ]0,r_0[$ and $\vartheta \in ]0,1[$. This concludes the proof of {\it ii)}.

\medskip

\noindent {\it Proof of iii)} \ We prove the existence of a constant $\vartheta \in ]0,1[$ as claimed by using a compactness argument and the parabolic scaling. We first observe  
that Lemma \ref{lem-localestimate*} implies that 
\begin{equation*} 
 K_r^{(m)}(z_0)  \subset \overline{\Omega^{(m)*}_{3r}(z_0)} \cap 
 \Big\{ t \le t_0 - \tfrac{1}{4 \pi \lambda^{N/(N+m)}} \,r^2 \Big\},
\end{equation*}
which is a compact subset of $\Omega^{(m)*}_{4r}(z_0)$. We now show that there exists $\vartheta \in ]0,1[$ such that
\begin{equation} \label{eq-claim*}
 \Omega_{3 \vartheta r}^{(m)*}(z) \subset \Omega_{4r}^{(m)*}(z_0) 
 \cap \Big\{\tau \le t_0 - \tfrac{r^2}{4 \pi \lambda^{N/(N+m)}}\Big\}
\end{equation}
for every $z \in \overline{\Omega^{(m)*}_{3r}(z_0)} \cap \Big\{ t \le t_0 - \frac{1}{4 \pi \lambda^{N/(N+m)}} \,r^2 \Big\}$. Our claim {\it iii)} will follow from \eqref{eq-claim*} and from Lemma \ref{lem-localestimate*}. 

We next prove \eqref{eq-claim*} by using the parabolic scaling. We note that
\begin{equation} \label{eq-parabolicscaling}
 (x_0 + r \xi,t_0 + r^2 \tau) \in {\Omega^{(m)*}_{k r}(z_0)} \quad \Longleftrightarrow \quad 
 (x_0+\xi,t_0 + \tau) \in {\Omega^{(m)*}_{k}(z_0)},
\end{equation}
for every positive $k$. We will need to use $k = 3, 4$ and $\vartheta$. 

We next show that \eqref{eq-claim*} holds for $r=1$.  The result for every $r \in ]0,r_0[$ will follow from \eqref{eq-parabolicscaling}. Let $\delta(z_0)$ be the distance of the compact set $\overline{\Omega_{3}^{(m)*}(z_0)} \cap \big\{t \le t_0 - \frac{1}{4 \pi \lambda^{N/(N+m)}}\big\}$ from the boundary of $\Omega_{4}^{(m)*}(z_0)$. We have that $\delta$ is a strictly positive function which depends continuously on $z_0$ through the coefficients of the matrix $A(z_0)$. Moreover, the condition \eqref{e-up} is satisfied, then there exists a positive constant $\delta_0$, only depending on $\lambda, \Lambda$ and $N$, such that 
\begin{equation*}
 \delta(z_0) \ge \delta_0 \quad \text{for every} \ z_0 \in \Omega.
\end{equation*}
On the other hand, the diameter of the set $\Omega^{(m)*}_{1}(z)$ is bounded by a constant that doesn't depend on $z$. Then, by \eqref{eq-parabolicscaling}, it is possible to find $\vartheta \in ]0,1[$ such that the diameter of $\Omega^{(m)*}_{\vartheta}(z)$ is not greater than $\delta_0$. This concludes the proof of \eqref{eq-claim*} in the case $r=1$. As said above, the case $r \in ]0,r_0[$ does follow from \eqref{eq-parabolicscaling}. This concludes the proof of {\it iii)}.

\medskip

\noindent {\it Proof of iv)} \ From \eqref{e-Omegam} it directly follows that
\begin{equation*}
	M_{5r}^{(m)} (z_0;\z) \ge \frac{m \, \omega_m }{m+2} \cdot \frac {N_{5r}^{m+2}(z_0;\z)}{4(t_0-\t)^2} \ge 
	\frac{m \, \omega_m}{m+2} (2(t_0-\t))^{m-2} \left( (N+m) \log(5/4)\right)^{(m+2)/2}.
\end{equation*}
The last claim then follows by choosing 
\begin{equation*}
 m^-:= \lambda^{- N(m-2)/(N+m)}\frac{m \, \omega_m}{m+2} \frac{r^{2m-4}}{(2 \pi)^{m-2} } \left( (N+m) \log(5/4)\right)^{(m+2)/2}.
\end{equation*}

We eventually remove the assumption $\div \, b - c = 0$. We mainly rely on the steps in the display \eqref{e-core-h} and we point out the needed changes for this more difficult situation. With this aim, we recall that $| \div \, b(z) - c(z)| \le k := (N+1) \Lambda $ because of \eqref{e-up}. We next introduce two auxiliary functions
\begin{equation*}
 \widehat u(x,t) := e^{k(t-t_0)} u(x,t), \qquad \widetilde u(x,t) := e^{-k(t-t_0)} u(x,t).
\end{equation*}
Note that, as $\L u = 0$, we have that 
\begin{equation*}
 \widehat \L \, \widehat u : = \L \widehat u + k \widehat u = 0, \qquad 
 \widetilde \L \, \widetilde u : = \L \widetilde u - k \widetilde u = 0.
\end{equation*}
Note that $\widehat \L, \widetilde \L$ satisfy the condition \eqref{e-up} with $\Lambda$ replaced by $k$. In particular, the statemets \emph{i)}-\emph{iv)} hold for $\L, \widehat \L$ and $\widetilde \L$ with the same constants $r_0, \vartheta, M^+, m^-$. 

We denote by $\widehat M^{(m)}, \widetilde M^{(m)}$ the kernels relative to $\widehat \L, \widetilde \L$, respectively, and $\widehat \Omega^{(m)}, \widetilde \Omega^{(m)}$ the superlevel sets we use in the representation formulas appearing in \eqref{e-core-h}. As we did before,  we let $r^*$ be the constant appearing in Lemma \ref{lem-localestimate*} and we choose $r_0 := r^*/2$. As a direct consequence of the definiton of $\widehat u$ and  $\widetilde u$, there exist two positive constants $\widehat c$ and $\widetilde c$ such that
\begin{equation} \label{e-bounds-u}
 \widehat c u(z) \le \widehat u(z) \le u(z), \qquad u(z) \le \widetilde u(z) \le \widetilde c u(z),
\end{equation}
for every $z \in \Omega_{5r}^{(m)}(z_0)$. 

We are now in position to conclude the of proof Proposition \ref{prop-Harnack}. Let's consider the first two lines of \eqref{e-core-h}. Since $\widehat u$ is a solution to $\widehat \L \, \widehat u = 0$, for every $z \in K^{(m)}_{r}(z_0)$, it holds
\begin{equation*} 
 \widehat u(z) \le \frac{1}{(\vartheta r)^{N+m}} \int_{\widehat \Omega^{(m)}_{\vartheta r}(z)} 
 \! \! \! \! \! \! \widehat M_{\vartheta r}^{(m)} (z; \zeta) \widehat u(\zeta) \, d\zeta \le 
 \frac{M^+}{(\vartheta r)^{N+m}} 
 \int_{\widehat \Omega^{(m)}_{\vartheta r}(z)} \!\!\! \widehat u(\zeta) \, d\zeta.
\end{equation*}
The first inequality follows from the fact that $\div \, b(\zeta) - c(\zeta) - k \le 0$ for every $\zeta$. From \eqref{e-bounds-u} it then follows that
\begin{equation*} 
 u(z) \le \frac{M^+}{\widehat c \, (\vartheta r)^{N+m}} 
 \int_{\widehat \Omega^{(m)}_{\vartheta r}(z)} \!\!\! u(\zeta) \, d\zeta.
\end{equation*}
Continuing along the next lines of \eqref{e-core-h}, we note that {\it iii)} also holds in this form: $\widehat \Omega_{\vartheta r}^{(m)}(z) \subset \widetilde \Omega_{4r}^{(m)}(z_0) \cap \big\{\tau \le t_0 - \frac{r^2}{4 \pi \lambda^{N/(N+m)}}\big\}$ for every $z \in K^{(m)}_{r}(z_0)$, so that
\begin{equation*} 
 u(z) \le \frac{M^+}{\widehat c \, (\vartheta r)^{N+m}} 
 \int_{\widetilde \Omega_{4r}^{(m)}(z_0) \cap \big\{\tau \le t_0 - \frac{r^2}{4 \pi \lambda^{N/(N+m)}}\big\}} 
 \!\! u(\zeta) \, d\zeta.
\end{equation*}
On the other hand, using the fact that $\widetilde \L \, \widetilde u = 0$, and $\div \, b(\zeta) - c(\zeta) + k \ge 0$, we find
\begin{equation*}
 \frac{m^-}{(5 r)^{N+m}} \int_{\widetilde \Omega_{4r}^{(m)}(z_0) \cap \big\{\tau \le t_0 - \frac{r^2}{4 \pi \lambda^{N/(N+m)}}\big\}} 
 \!\!\! \widetilde u(\zeta) \, d\zeta \le
 \frac{1}{(5 r)^{N+m}} \int_{\widetilde \Omega_{5r}^{(m)}(z_0)} \! \! \! \! \! \! \widetilde M_{5 r}^{(m)} (z_0; \zeta) \widetilde u(\zeta) \, d\zeta \le 
  \widetilde u(z_0).
\end{equation*}
Thus, recalling that $u \le \widetilde u$ and $u (z_0) = \widetilde u(z_0)$, we conclude that
\begin{equation*}
 u(z) \le
 \frac{5^{N+m} M^+}{\widehat c \,  \vartheta^{N+m} m^-} u(z_0),
\end{equation*}
for every $z \in K^{(m)}_{r}(z_0)$. This concludes the proof of Proposition \ref{prop-Harnack}.
\end{proof}

\medskip

As a simple consequence of Proposition \ref{prop-Harnack} we obtain the following result. 

\begin{corollary} \label{cor-Harnack-inv}
There exist four positive constants $r_1, \kappa_1, \vartheta_1$ and $C_D$, with $\kappa_1, \vartheta_1 < 1$, such that the following inequality holds. For every $z_0 \in \Omega$ and for every positive $r$ such that $r \le r_1$ and ${\Q_{r}(z_0)} \subset \Omega$ we have that
  \begin{equation} \label{e-HD}
	\sup_{D_{r}(z_0)} u \le C_D u(z_0)
\end{equation}
for every $u \ge 0$ solution to $\L u = 0$ in $\Omega$. Here 
  \begin{equation*}
	D_{r}(z_0) := B_{\vartheta_1 r}(x_0) \times \{t_0 - \kappa_1 r^2\}.
\end{equation*}
\end{corollary}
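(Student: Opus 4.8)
The plan is to deduce Corollary~\ref{cor-Harnack-inv} directly from Proposition~\ref{prop-Harnack}, by enclosing a suitable parabolic ball $\Omega^{(m)}_{\rho}(z_0)$ inside the cylinder $\Q_r(z_0)$ and then recognizing the Euclidean disc $D_r(z_0)$ as a subset of the compact set $K^{(m)}_{\rho}(z_0)$ appearing in \eqref{e-Kr}. Throughout I fix one integer $m>2$ and let $r_0$ be the constant from Proposition~\ref{prop-Harnack}; all the constants produced below will depend only on $\L$ and on this fixed $m$.

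First I would record a crude geometric bound on the parabolic ball. Applying Lemma~\ref{lem-localestimate*} to $\widetilde\Gamma$ (as in the proof of Proposition~\ref{prop-Harnack}) one has $\Omega^{(m)}_R(z_0)\subset\Omega^{(m)*}_{3R}(z_0)$; combining this with the explicit (ellipsoid--paraboloid) description of $\Omega^{(m)*}_{3R}(z_0)$, the analogue of \eqref{eq-Omega_r*-exp} for $\widetilde\Gamma$, and the bounds $\lambda^N\le\det A(z_0)\le\Lambda^N$, $\lambda^{-1}|\xi|^2\le\langle A^{-1}(z_0)\xi,\xi\rangle\le\Lambda^{-1}\cdot$wait---$\tfrac1\Lambda|\xi|^2\le\langle A^{-1}(z_0)\xi,\xi\rangle\le\tfrac1\lambda|\xi|^2$ coming from \eqref{e-up}, one obtains two constants $A_1,A_2>0$ with
\[
\Omega^{(m)}_R(z_0)\ \subset\ B_{A_1 R}(x_0)\times\,]t_0-A_2R^2,\,t_0[\,,\qquad R\in\,]0,r^*].
\]
Hence, setting $\mu:=\min\{(5A_1)^{-1},(5\sqrt{A_2})^{-1},1\}$ and $\rho:=\mu r$, one has $\Omega^{(m)}_{5\rho}(z_0)\subset B_r(x_0)\times\,]t_0-r^2,t_0[\,=\Q_r(z_0)$, so $\Omega^{(m)}_{5\rho}(z_0)\subset\Omega$ whenever $\Q_r(z_0)\subset\Omega$; moreover $\rho\le r_0$ and $\rho\le r^*$ as soon as $r\le r_1$, for a suitable $r_1\in\,]0,r_0]$. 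With these choices Proposition~\ref{prop-Harnack} applies to the ball of radius $\rho$ and gives $\sup_{K^{(m)}_{\rho}(z_0)}u\le C_K\,u(z_0)$.

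It then remains to exhibit $\kappa_1,\vartheta_1\in\,]0,1[$ with $D_r(z_0)=B_{\vartheta_1 r}(x_0)\times\{t_0-\kappa_1 r^2\}\subset K^{(m)}_{\rho}(z_0)$; this is the heart of the matter. By Lemma~\ref{lem-localestimate*} one has the reverse inclusion $\Omega^{(m)*}_{\rho}(z_0)\subset\Omega^{(m)}_{\rho}(z_0)$, and the analogue of \eqref{eq-Omega_r*-exp} for $\widetilde\Gamma$ exhibits the slice of $\Omega^{(m)*}_{\rho}(z_0)$ at time $t=t_0-\sigma\rho^2$ as the ellipsoid $\{x:\langle A^{-1}(z_0)(x-x_0),x-x_0\rangle\le\Phi(\sigma)\,\rho^2\}$, where $\Phi(\sigma)=-4\sigma\bigl(\log 2+\tfrac12\log\det A(z_0)+\tfrac{N+m}{2}\log(4\pi\sigma)\bigr)$ is strictly positive, and (using $\det A(z_0)\le\Lambda^N$) bounded below by a positive quantity depending only on $\L,m,\sigma$, for every $\sigma$ in a fixed interval $]0,\sigma^*[$. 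Since $K^{(m)}_{\rho}(z_0)$ is non-void for small $\rho$ (the remark following \eqref{e-Kr}), I would pick $\sigma$ in this interval that moreover satisfies $\sigma\ge(4\pi\lambda^{N/(N+m)})^{-1}$, so that the slice $\{t=t_0-\sigma\rho^2\}$ lies below the cutting hyperplane in \eqref{e-Kr}; for such $\sigma$ the ellipsoidal slice contains the Euclidean ball $B_{\vartheta'\rho}(x_0)$ with $\vartheta':=\sqrt{\lambda\,\Phi(\sigma)}>0$ (here one uses $\langle A^{-1}(z_0)\xi,\xi\rangle\le\lambda^{-1}|\xi|^2$, together with $\det A(z_0)\le\Lambda^N$ to make $\vartheta'$ independent of $z_0$). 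Consequently $B_{\vartheta'\rho}(x_0)\times\{t_0-\sigma\rho^2\}\subset\overline{\Omega^{(m)}_{\rho}(z_0)}\cap\{t\le t_0-(4\pi\lambda^{N/(N+m)})^{-1}\rho^2\}=K^{(m)}_{\rho}(z_0)$. Writing $\rho=\mu r$ and setting $\kappa_1:=\sigma\mu^2$, $\vartheta_1:=\vartheta'\mu$ --- shrinking $\mu$ once more if needed so that $\kappa_1,\vartheta_1<1$, which leaves all the above inclusions valid by parabolic scaling --- we obtain $D_r(z_0)\subset K^{(m)}_{\rho}(z_0)$, hence $\sup_{D_r(z_0)}u\le C_K u(z_0)$. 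This is the assertion, with $C_D:=C_K$ and $r_1,\kappa_1,\vartheta_1$ as constructed.

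The step I expect to be the main obstacle is the choice of $\sigma$ in the previous paragraph: one must keep the time-slice $\{t=t_0-\sigma\rho^2\}$ simultaneously below the cut of \eqref{e-Kr} (so that it lies in $K^{(m)}_{\rho}(z_0)$) and above the ``tip'' of the paraboloid $\Omega^{(m)*}_{\rho}(z_0)$ (so that its cross-section is a non-degenerate ellipsoid). This amounts to comparing the vertical extent of $\Omega^{(m)}_{\rho}(z_0)$ --- governed through Lemma~\ref{lem-localestimate*} by $\det A(z_0)$ and the ellipticity constants --- with the cutting constant $(4\pi\lambda^{N/(N+m)})^{-1}$ of \eqref{e-Kr}; the non-emptiness of $K^{(m)}_{\rho}(z_0)$ is exactly what provides the room for such a $\sigma$. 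Once a valid $\sigma$ is fixed, everything else reduces to uniform parabolicity (to pass from the ellipsoidal slice to a Euclidean ball, uniformly in $z_0$), to the parabolic scaling used to descend from radius $1$ to radius $\rho$, and to routine manipulations with the Gaussian bound \eqref{upper-bound}.
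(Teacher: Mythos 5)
Your strategy coincides with the paper's (whose proof of this corollary is just a two-line remark asserting exactly the two inclusions you set out to verify), and the first half --- enclosing $\Omega^{(m)}_{5\rho}(z_0)$, $\rho=\mu r$, inside $\Q_r(z_0)$ via Lemma \ref{lem-localestimate*} and the explicit ellipsoid--paraboloid description --- is sound. The gap is precisely in the step you single out as the crux, namely the inclusion $D_r(z_0)\subset K^{(m)}_{\rho}(z_0)$. You need a time level $t_0-\sigma\rho^2$ lying simultaneously below the cut $t_0-\frac{\rho^2}{4\pi\lambda^{N/(N+m)}}$ of \eqref{e-Kr} and strictly inside the inner paraboloid $\Omega^{(m)*}_{\rho}(z_0)$. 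Evaluating \eqref{eq-Omega_rm*} on the axis $x=x_0$ shows that the deepest point of $\Omega^{(m)*}_{\rho}(z_0)$ sits at depth $\frac{\rho^2}{4\pi\,(4\det A(z_0))^{1/(N+m)}}$; equivalently, your condition $\Phi(\sigma)>0$ forces $\sigma<\frac{1}{4\pi(4\det A(z_0))^{1/(N+m)}}$. Since \eqref{e-up} gives $\det A(z_0)\ge\lambda^N$, this upper bound is \emph{strictly smaller} than $\frac{1}{4\pi\lambda^{N/(N+m)}}$, so the set of admissible $\sigma$ is empty and the construction breaks down.

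The appeal to the non-emptiness of $K^{(m)}_{\rho}(z_0)$ does not repair this: that statement concerns the full set $\overline{\Omega^{(m)}_{\rho}(z_0)}$ and not the strictly shorter inner approximation $\Omega^{(m)*}_{\rho}(z_0)$ (the factor $2$ in \eqref{eq-Omega_r*} truncates the paraboloid), and in any case it would only provide a single point rather than a spatial ball of radius comparable to $\rho$. Moreover the difficulty you hit is real and traces back to the constant chosen in \eqref{e-Kr}: the paper's own verification of claim \emph{(i)} in the proof of Proposition \ref{prop-Harnack} asserts that the membership of $\bigl(x_0,\,t_0-\frac{r^2}{4\pi\lambda^{N/(N+m)}}\bigr)$ in $\overline{\Omega^{(m)*}_{r}(z_0)}$ is equivalent to $\det A(z_0)\ge\lambda^N/4$, whereas the computation actually yields $4\det A(z_0)\le\lambda^N$, which contradicts $\det A(z_0)\ge\lambda^N$. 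Your scheme would close, uniformly in $z_0$, if the cut level in \eqref{e-Kr} were replaced by a constant strictly smaller than $\frac{1}{4\pi(4\Lambda^N)^{1/(N+m)}}$: then a valid $\sigma$ exists, and the remaining ingredients (passing from the ellipsoidal slice to a Euclidean ball via $\langle A^{-1}(z_0)\xi,\xi\rangle\le\lambda^{-1}|\xi|^2$, and the parabolic rescaling producing $\kappa_1,\vartheta_1$) are correct. As written against the paper's definition of $K^{(m)}_{r}$, however, the key inclusion is not established.
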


\begin{center}
\begin{tikzpicture}
\clip(-.52,6.82) rectangle (7.02,1.48);
\path[draw,thick] (-.5,6.8) rectangle (7,1.5);
\begin{axis}[axis y line=none, axis x line=none, 
    xtick=\empty,ytick=\empty, 
    ymin=-1.1, ymax=1.1,
    xmin=-.2,xmax=1.8, samples=101, rotate= -90]
\addplot [black,line width=.7pt, domain=-.01:.01] {sqrt(.0001 - x * x)} node[above] {$z_0$};
\addplot [black,line width=.7pt, domain=-.01:.01] {-sqrt(.0001 - x * x)};
\addplot [ddblue,line width=.7pt, domain=.001:1] {sqrt(- 2 * x * ln(x))}; 
\addplot [ddblue,line width=.7pt,domain=.001:1] {- sqrt(- 2 * x * ln(x))} node[below] { \hskip30mm {\color{dddblue} $\Omega_r(z_0)$}};
\addplot [bblue,line width=.7pt,domain=.001:.1111] {sqrt(- 3 * x * ln(9*x)};
\addplot [bblue,line width=.7pt,domain=.001:.1111] {- sqrt(- 3 * x * ln(9*x))};
\end{axis}
\draw [line width=.6pt] (0,6.165) rectangle node [above=1.5cm,right=2.8cm] {$Q_r(z_0)$} (5.65,2);
\draw [red,line width=1.2pt] (2.1,5.9) -- node [below=5pt] { \hskip20mm $D_r(z_0)$} (3.6,5.9);
\end{tikzpicture} 

{\sc \qquad Fig.5}  - The set $D_r(z_0)$.
\end{center}

The above assertion follows from the fact that there exists a positive constant $\delta_1$ such that $\Omega_{r}^{(m)}(z_0) \subset \Q_{\delta_1 r}(z_0)$ for every $r \in 0], r_0[$ and that $D_{r}(z_0) \subset K^{(m)}_{r}(z_0)$, for some positive $\kappa_1, \vartheta_1$. Note that Corollary  \ref{cor-Harnack-inv} and Theorem \ref{th-Harnack-inv} differ in that, unlike the cylinders $\Q^+_{r}(z_0)$ and $\Q^-_{r}(z_0)$, the set $D_{r}(z_0)$ is not arbitrary. We next prove Theorem \ref{th-Harnack-inv} by using iteratively the Harnack inequality proved in Corollary \ref{cor-Harnack-inv}.

\medskip

\begin{proof} {\sc of Theorem \ref{th-Harnack-inv}.} As a first step we note that, up to the change of variable $v(x,t) := u(x_0 + r t, t_0 +r^2 t)$, it is not restrictive to assume that $z_0 = 0$ and $r = 1$. Indeed, the function $v$ is a solution to an equation $\widehat \L v = 0$, where the coefficients of the operator $\widehat \L$ are $\widehat a_{ij}(x,t) = a_{ij}(x_0 + r t, t_0 +r^2 t)$ satisfy all the assumptions made for $\L$, with the constants $M$ and $\Lambda$ appearing in \eqref{e-hc} and \eqref{e-up} replaced by $r^\alpha M$ and $r^\alpha \Lambda$, respectively, and the same constant $\lambda$ in \eqref{e-up}. Then, as $r \in ]0, R_0]$, the H\"older constant in \eqref{e-hc} of $\widehat \L$ is $R_0^{\, \alpha} M$ and the parabolicity constants in \eqref{e-up} are $\lambda$ and $R_0^{\, \alpha} \Lambda$, for every $r \in ]0, R_0]$. In the following we then assume that $z_0 = 0$ and $r=1$. Moreover, $r_1$ denotes the constant appearing in Corollary \ref{cor-Harnack-inv} and relative to $\widehat \L$, which depends on the constants $M, \lambda, \Lambda$ and $R_0$. We then choose four positive constants $\iota, \kappa, \mu, \vartheta$ with $0 < \iota < \kappa < \mu < 1$ and $0 < \vartheta < 1$ and we consider the cylinders $\Q^+ := \Q_1^+(0)$ and $\Q^- := \Q_1^-(0)$ as defined in \eqref{e-QPM}. We  let 
\begin{equation*}
	r_0 := \min \big\{r_1, 1 - \vartheta, \sqrt{1 - \mu} \big\}
\end{equation*}
and we note that $\Q_r(z) \subset \Q_1(0)$ whenever $z  \in B(0,\vartheta) \times ]- \mu, 0[$ and $0 < r < r_0$.

We next choose any $z^- = (x^-,t^-) \in \Q^-, z^+ = (x^+,t^+) \in \Q^+$ and we rely on Corollary \ref{cor-Harnack-inv} to construct a \emph{Harnack chain}, that is a finite sequence $w_0, w_1, \dots, w_m$ in $\Q_1(0)$ such that
\begin{equation} \label{eq-HC}
	w_0 = z^+, \qquad w_k = z^-, \qquad u(w_j) \le C_D u(w_{j-1}), \quad j=1, \dots, m.
\end{equation}

\begin{center}
\begin{tikzpicture}
\path[draw,thick] (-1,.7) rectangle (8.7,6.5);
\filldraw [fill=black!4!white, line width=.6pt] (1.5,6) rectangle node[right=2cm] {$Q^+_r(z_0)$} (5.5,4.5);
\filldraw [fill=black!4!white, line width=.6pt] (1.5,2) rectangle node[right=2cm] {$Q^-_r(z_0)$} (5.5,3.5);
\draw [line width=.6pt] (0,6) rectangle node[right=3.5cm] {$Q_r(z_0)$}(7,1);
\draw [line width=.6pt] (3.5,6) circle (1pt) node[above] {$z_0$};
\foreach \x in {0,1,...,5}
\draw [xshift=\x*.4 cm,yshift=-\x*.2 cm,bblue,line width=.6pt] (.4,3.6) rectangle (3.2,4.6);
\foreach \x in {0,1,...,5}
\draw [xshift=\x*.4 cm,yshift=-\x*.2 cm,red,line width=1.2pt] (1.4,4.4) -- (2.2,4.4);
\foreach \x in {0,1,...,5}
\draw [xshift=\x*.4 cm,yshift=-\x*.2 cm,line width=1pt] (2.2,4.4) circle (1pt);
\draw [line width=1pt] (1.8,4.6) circle (1pt) node[above] {$z^+$};
\draw [line width=1pt] (4.2,3.4) circle (1pt) node[above=2pt] {$\, z^-$};
\end{tikzpicture}

{\sc \qquad Fig.6}  - A Harnack chain.
\end{center}

We build a Harnack chain as follows. For a positive integer $m$ that will be fixed in the sequel, we choose a positive $r$ and the vector $y \in \R^N$ satisfying
\begin{equation} \label{eq-y}
	m \kappa_1 r^2 = t^+-t^-, \qquad m r y = x^+ - x^-.
\end{equation}
Let $\kappa_1,\ \vartheta_1$ be the constants in Corollary \ref{cor-Harnack-inv}. We define
\begin{equation} \label{eq-wj}
	w_j := (x^+ + j r y, t^+ - j \kappa_1 r^{2}), \qquad j=0,1, \dots, m.
\end{equation}
Clearly, if $r \le r_0$, then $\Q_r(w_j) \subset \Q_1(0)$ for $j=0, 1, \dots, m$. If moreover $|y| \le \vartheta_1$, then $w_{j} \in D_{r}(w_j-1)$ for $j=1, \dots, m$. This proves that \eqref{eq-HC} holds, and we conclude that
\begin{equation} \label{eq-H+-}
	u(z^-) \le C_D^{\, m} u(z^+).
\end{equation}
We next choose $m$ in order to have both condtions $r \le r_0$ and $|y| \le \vartheta_1$ satisfied. 

The choice of $m$ is different in the case $|x^+ - x^-|$ is \emph{small} or \emph{large} with respect to $t^+-t^-$. If 
\begin{equation} \label{eq-case1}
	\frac{|x^+-x^-|}{t^+-t^-} \le \frac{\vartheta_1}{\kappa_1 r_0},
\end{equation}
we let $m$ be the positive integer satisfying
\begin{equation} \label{eq-mm}
	(m-1) \kappa_1 r_0^{\, 2} < t^+ - t^- \le m \kappa_1 r_0^{\, 2},
\end{equation}
and, in accordance with \eqref{eq-y}, we choose $r$ as the unique positive number satisfying $m \kappa_1 r^2 = t^+-t^-$. From \eqref{eq-mm} it directly follows $r \le r_0$, while from \eqref{eq-mm} and \eqref{eq-case1} we obtain $|y| \le \vartheta_1$.

Suppose now that
\begin{equation} \label{eq-case2}
	\frac{|x^+-x^-|}{t^+-t^-} > \frac{\vartheta_1}{\kappa_1 r_0}.
\end{equation}
In view of \eqref{eq-y}, in this case we choose $m$ as the integer satisfying
\begin{equation} \label{eq-m}
	m - 1 < \frac{\kappa_1 |x^+-x^-|^2}{\vartheta_1^{\, 2} (t^+-t^-)} \le m,
\end{equation}
and we let $y$ be the vector parallel to $x^- - x^+$ and such that 
\begin{equation*} 
	m |y| = \frac{\kappa_1 |x^+-x^-|^2}{{\vartheta_1 (t^+-t^-)}}.
\end{equation*}
Clearly, $|y| \le \vartheta_1$, and \eqref{eq-case2} implies $r \le r_0$. 

We next find a bound for the integer $m$, which is uniform with respect to $z^- \in \Q^-$ and $z^+ \in \Q^+$, and we rely on \eqref{eq-H+-} to conclude the proof. In the first case \eqref{eq-case1} we obtain from \eqref{eq-mm} that $m \le \frac{t^+-t^-}{\kappa_1 r_0^{\, 2}}$. In the second case \eqref{eq-case2} we rely on \eqref{eq-m} and we note that $t^+-t^- \ge \kappa- \iota$, by our choiche of $\Q^-$ and $\Q^+$. Then in this case we have $m < \frac{ 4\kappa_1}{\vartheta_1^{\, 2} (\kappa - \iota)}$
Summarizing, we have proved that the inequality \eqref{e-H1} holds with 
\begin{equation*}
 C_H := \exp \left( \max \big\{ \tfrac{1}{\kappa_1 r_0^{\, 2}}, 
 \tfrac{4\kappa_1}{\vartheta_1^{\, 2} (\kappa - \iota)} \big\} \log C_D \right).
\end{equation*}
\end{proof}

\setcounter{equation}{0} 
\section{An approach relying on sets of finite perimeter}\label{SectionBV}

In this section we present another approach to the generalized divergence theorem, relying on 
De Giorgi's theory of perimeters, see \cite{DeGiorgi2,DeGiorgi3} or 
\cite{AmbrosioFuscoPallara,Maggi},
and we show how this leads to a slightly different proof of Theorem \ref{th-1}. 
This approach requires more prerequisites than that used in Section \ref{SectionDivergence}, but, 
as explained in the Introduction, is more flexible and avoids the Dubovicki\v{\i} theorem. 
In this section, if $\mu$ is a Borel measure and $E$ is a Borel set, we use the notation 
$\mu\mres E(B)=\mu(E\cap B)$. 
As before, $C^1_c \pr{\Omega}$ denotes the set of $C^1$ functions compactly supported in the open set $\Omega \subset \R^n$. 

\begin{definition}[\sc $BV$ Functions]
Let $u \in L^1 \pr{\Omega}$; we say that $u$ is a function of bounded variation in $\Omega$ if its
distributional derivative $Du=\pr{D_1 u,\ldots,D_n u}$ is an $\R^n$-valued Radon measure in $\Omega$, 
{\em i.e.}, if
\[
    \int_{\Omega}u \frac{\partial \varphi}{\partial z_i}\ dz=-\int_{\Omega}\varphi \, dD_i u, 
    \quad \forall \: \varphi \in C_c^1 \pr{\Omega}, \; i=1,\ldots,n 
\]
or, in vectorial form, 
\begin{equation} \label{e-bv-div}
 \int_{\Omega}u\, \mathrm{div}\,\Phi\ dz=-\sum_{i=1}^n \int_{\Omega}\Phi_i\ d D_i u
 = -\int_{\Omega}\langle\Phi, Du\rangle , 
\quad \forall \: \Phi \in C_c^1 \pr{\Omega;\R^n}.
\end{equation}
The vector space of all functions of bounded variation in $\Omega$ is denoted by $BV \pr{\Omega}$. 
The variation $V \pr{u,\Omega}$ of $u$ in $\Omega$ is defined by:
$$
    V \pr{u,\Omega}:=\sup \left\{  \int_{\Omega}u\, \mathrm{div}\,\Phi\ dz:
    \Phi \in C^1_c \pr{\Omega;\R^{n}},\, \norm{\Phi}_{\infty} \leq 1 \right\}.
$$
\end{definition}
We recall that ${V \pr{u,\Omega}=\abs{Du}\pr{\Omega}}<\infty$ for any $u \in BV \pr{\Omega}$, where 
$\abs{Du}$ denotes the total variation of the measure $Du$. We also recall that if 
$u \in C^1 \pr{\Omega}$ then 
$$
V \pr{u,\Omega}=\int_{\Omega} \abs{\nabla u} dz.
$$
When the function $u$ is the characteristic functions $\chi_E$ of some measurable set, its variation is said \emph{ perimeter of $E$}.
\begin{definition}[\sc Sets of finite perimeter]
Let $E$ be an $\mathcal{L}^n-$measurable subset of $\R^n$. For any open set $\Omega \subset \R^n$ the perimeter of $E$ in $\Omega$ is denoted by $P \pr{E,\Omega}$ and it is the variation of $\rchi_{E}$ in 
$\Omega$, {\em i.e.},
\begin{equation*} 
P \pr{E,\Omega}:=\sup \left\{\int_{E}\mathrm{div}\, \Phi\ dz:\Phi \in C^1_c \pr{\Omega;\R^n},\, \norm{\Phi}_{\infty}\leq 1 \right\}.
\end{equation*}
We say that $E$ is a set of finite perimeter in $\Omega$ if $P \pr{E,\Omega}<\infty$. 
\end{definition}
Obviously, several properties of the perimeter of $E$ can be stated in terms of the variation of $\chi_E$. In particular, if $\mathcal{L}^n(E \cap \Omega)$ is finite, then $\rchi_{E} \in L^1 \pr{\Omega}$ and $E$ has finite perimeter in $\Omega$ if and only if $\rchi_{E}\in BV \pr{\Omega}$ and $P \pr{E,\Omega} = \abs{D{\rchi_{E}}}\pr{\Omega}$. Both the notations $|D\chi_E|(B)$ and $P(E,B)$, $B$ Borel, are used 
to denote the total variation measure of $\chi_E$ on a Borel set $B$ and we say that $E$ is a set of locally
finite perimeter in $\Omega$ if $P(E,K)<\infty$ for every compact set $K \subset \Omega$. Finally, formula \eqref{e-bv-div} looks like a divergence theorem:
\begin{equation} \label{e-per-div}
 \int_{E}\, \mathrm{div}\, \Phi\ dz=- \int_{\Omega}\langle\Phi,D\chi_E\rangle, 
\quad \forall \: \Phi \in C_c^1 \pr{\Omega;\R^n}, 
\end{equation}
but it becomes more readable if some precise information is given on the set where the measure $D\chi_E$ 
is concentrated. Therefore, we introduce the notions of {\em reduced boundary} and of {\em density} and  
recall the structure theorem for sets with finite perimeter due to E. De Giorgi, see \cite{DeGiorgi3} 
and \cite[Theorem 3.59]{AmbrosioFuscoPallara}, and the characterization due to H. Federer.  

\begin{definition}[\sc Reduced boundary]
Let $\Omega$ be an open subset of $\R^n$ and let $E$ be a set of locally finite perimeter in $\Omega$. 
We say that $z \in \Omega$ belongs to the reduced boundary ${\mathcal F}E$ of $E$ if $|D\chi_E|(B_\r(z))>0$ 
for every $\r>0$ and the limit
$$
    \nu_E\pr{z}:=\lim_{\r \to 0^+}\frac{D{\rchi_{E}}\pr{B_{\r}\pr{z}}}{\abs{D{\rchi_{E}}}\pr{B_{\r}\pr{z}}}
$$
exists in $\R^n$ and satisfies $\abs{\nu_E \pr{z}}=1$. The function 
$\nu_E:{\mathcal F}E \rightarrow {\mathbb S}^{n-1}$ 
is Borel continuous and it is called the {\em generalized (or measure-theoretic) inner normal} to $E$. 
\end{definition}
Notice that the reduced boundary is a subset of the topological boundary. The Besicovitch differentiation 
theorem, see e.g. \cite[Theorem 2.22]{AmbrosioFuscoPallara}, yields  $D\rchi_{E} = \nu_E\abs{D\rchi_{E}}$, 
and $\abs{D\rchi_{E}}(\Omega\setminus{\mathcal F}E)=0$, hence \eqref{e-per-div} becomes
\begin{equation} \label{e-3} 
\int_{E}\mathrm{div}\, \Phi\ dz=-\int_{{\mathcal F}E}\scp{\nu_E,\Phi}\ d \abs{D\rchi_{E}}, 
\quad \forall \: \Phi \in C^1_c \pr{\Omega;\R^N}.
\end{equation}

The relation between the topological boundary and the reduced boundary can be further analyzed by 
introducing the notion of {\em density} of a set at a given point. 

\begin{definition}[\sc Points of density $\alpha$] 
For every $\alpha \in \left[0,1\right]$ and every $\mathcal{L}^n-$measurable set $E \subset \R^n$ we denote by $E^{\pr{\alpha}}$ the set
$$
E^{\pr{\alpha}}=\left\{ z \in \R^n: \lim_{\r \to 0^+}
\frac{\mathcal{L}^n(E \cap B_\r \pr{z})}{\mathcal{L}^n(B_\r \pr{z})}=\alpha\right\}.
$$
\end{definition}
Thus $E^{\pr{\alpha}}$, which turns out to be a Borel set, is the set of all points where $E$ has density 
$\alpha$. The sets $E^{\pr{0}}$ and $E^{\pr{1}}$ are called \emph{the measure-theoretic exterior} and \emph{interior} of $E$ and, in general, strictly contain the topological exterior and interior of the set $E$, respectively. 
We recall the well known \emph{Lebesgue's density theorem}, that asserts that for every 
$\mathcal{L}^n-$measurable set $E \subset \R^n$
$$
\mathcal{L}^n(E \triangle E^{\pr{1}})=0, \quad \mathcal{L}^n(\pr{\R^n \setminus E} \triangle E^{\pr{0}})=0,
$$
{\em i.e.}, the density of $E$ is $0$ or $1$ at $\mathcal{L}^n-$almost every point in $\R^n$. 
This notion allows to introduce the 
{\em essential} or {\em measure-theoretic} boundary of $E$ as 
$\partial^*E=\R^n\setminus (E^{\pr{0}}\cup E^{\pr{1}})$, which is contained in the topological boundary
and contains the reduced boundary. Finally, the De Giorgi structure theorem says that 
$|D\rchi_E|=\H^{n-1}\mres{\mathcal F}E$ and a deep result due to Federer 
(see \cite[4.5.6]{federer1969geometric} or 
\cite[Theorem 3.61]{AmbrosioFuscoPallara}) states that if $E$ has finite perimeter in $\R^n$ then 
\[
\mathcal{F}E\subset E^{\pr{1/2}}\subset \partial^*E \quad \text{and}\quad 
\H^{n-1}(\R^n\setminus (E^{\pr{0}}\cup\mathcal{F}E\cup E^{\pr{1}}))=0
\]
hence, in particular, $\nu_E$ is defined $\H^{n-1}-$a.e. in $\partial^*E$. Notice also (see 
\cite[Theorem 3.62]{AmbrosioFuscoPallara}) that if $\H^{n-1}(\partial E)<\infty$ then $E$ has finite 
perimeter. The results of De Giorgi and Federer imply that if $E$ is a set of finite perimeter in 
$\Omega$ then $D \rchi_E=\nu_E \H^{n-1}\mres \mathcal{F}E$ and the divergence 
theorem \eqref{e-3} can be rewritten in the form:
\begin{equation}\label{e-rem-1}
\int_E \mathrm{div}\, \Phi\ dz=
-\int_{{\mathcal F}E}\scp{\nu_E,\Phi}\ d \H^{n-1}=
-\int_{\partial^*E}\scp{\nu_E,\Phi}\ d \H^{n-1}, \quad \forall \: \Phi \in C^1_c \pr{\Omega;\R^n},
\end{equation}
much closer to the classical formula \eqref{eq-div}. Indeed, the only difference is that the inner normal and the boundary are understood in a measure-theoretic sense and not in the topological one; in particular, for a generic set of finite perimeter, ${\mathcal F}E$ needs not to be closed and $\nu_E$ needs not to be continuous.
Moreover, $\nu_E$ is defined $\H^{n-1}-$a.e. in $\partial^*E$. 

Let us see now how we can rephrase the results of Section \ref{SectionDivergence} in terms of
perimeters and how we can modify the proof of Theorem \ref{th-1}. 
We first recall the Fleming--Rischel formula 
(see \cite{fleming} or \cite[Theorem 3.40]{AmbrosioFuscoPallara}), {\em i.e.}, the coarea
formula for $BV$ functions.
\begin{theorem}[\sc Coarea formula in $BV$] \label{th-cobv}
For any open set $\Omega \subset \R^n$ and $G \in L^1_{\mathrm{loc}}\pr{\Omega}$ one has
$$
V \pr{G,\Omega}=\int_{\R}P\pr{\{z \in \Omega:G \pr{z}>y\},\Omega}\ dy.
$$
In particular, if $G \in BV \pr{\Omega}$ the set $\left\{G >y \right\}$ has finite perimeter in $\Omega$ for 
$\H^1-$a.e.~$y \in \R$ and
$$
\abs{DG}\pr{B}=\int_{\R}\abs{D{\rchi_{\left\{G>y\right\}}}}\pr{B}\ dy, \quad 
DG\pr{B}=\int_{\R}D{\rchi_{\left\{G>y\right\}}}\pr{B}\ dy, \quad 
\forall \: B \in \mathcal{B}\pr{\Omega}.
$$
\end{theorem}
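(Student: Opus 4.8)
The plan is to establish the scalar identity $V\pr{G,\Omega}=\int_{\R}P\pr{\{G>y\},\Omega}\,dy$ first — splitting it into the two inequalities, of which only ``$\ge$'' requires real work — and then to bootstrap it to the two measure identities by a soft localization argument.

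For the inequality ``$\le$'' I would use the layer-cake decomposition $G=\int_0^{+\infty}\rchi_{\{G>y\}}\,dy-\int_{-\infty}^{0}\rchi_{\{G\le y\}}\,dy$, valid for every $G\in L^1_{\mathrm{loc}}\pr{\Omega}$. Fix $\Phi\in C^1_c\pr{\Omega;\R^n}$ with $\norm{\Phi}_{\infty}\le 1$. Since $\int_\Omega\div\Phi\,dz=0$ we have $\int_{\{G\le y\}}\div\Phi\,dz=-\int_{\{G>y\}}\div\Phi\,dz$ for every $y$, so Tonelli's theorem gives $\int_\Omega G\,\div\Phi\,dz=\int_{\R}\pr{\int_{\{G>y\}}\div\Phi\,dz}dy\le\int_{\R}P\pr{\{G>y\},\Omega}\,dy$; taking the supremum over such $\Phi$ yields $V\pr{G,\Omega}\le\int_{\R}P\pr{\{G>y\},\Omega}\,dy$. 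If $V\pr{G,\Omega}=+\infty$ this already forces equality, so for the rest we may assume $V\pr{G,\Omega}<+\infty$.

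For ``$\ge$'' I would approximate $G$ by a sequence $G_k\in C^\infty\pr{\Omega}$ with $G_k\to G$ in $L^1_{\mathrm{loc}}\pr{\Omega}$ and $\int_\Omega\abs{\nabla G_k}\,dz\to V\pr{G,\Omega}$ (the usual approximation of functions of bounded variation by mollification together with an exhaustion argument). For a smooth $G_k$ the classical coarea formula, Theorem~\ref{th-co}, gives $\int_\Omega\abs{\nabla G_k}\,dz=\int_{\R}\H^{n-1}\pr{\{G_k=y\}\cap\Omega}\,dy$, and by Sard's theorem $\{G_k=y\}$ is a smooth hypersurface for a.e.~$y$, so $P\pr{\{G_k>y\},\Omega}=\H^{n-1}\pr{\{G_k=y\}\cap\Omega}$ for a.e.~$y$ and hence $\int_\Omega\abs{\nabla G_k}\,dz=\int_{\R}P\pr{\{G_k>y\},\Omega}\,dy$. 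To pass to the limit I would use the elementary identity $\int_{\R}\abs{\rchi_{\{a>y\}}-\rchi_{\{b>y\}}}\,dy=\abs{a-b}$ for $a,b\in\R$: integrating it over a relatively compact open subset of $\Omega$ and applying Tonelli shows that $G_k\to G$ in $L^1_{\mathrm{loc}}$ forces, along a subsequence, $\rchi_{\{G_k>y\}}\to\rchi_{\{G>y\}}$ in $L^1_{\mathrm{loc}}\pr{\Omega}$ for a.e.~$y$. By lower semicontinuity of the perimeter (immediate from its definition as a supremum of continuous functionals), $P\pr{\{G>y\},\Omega}\le\liminf_k P\pr{\{G_k>y\},\Omega}$ for a.e.~$y$, and Fatou's lemma then gives $\int_{\R}P\pr{\{G>y\},\Omega}\,dy\le\liminf_k\int_{\R}P\pr{\{G_k>y\},\Omega}\,dy=V\pr{G,\Omega}$. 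Together with the previous step this yields the scalar identity, and in particular $P\pr{\{G>y\},\Omega}<+\infty$ for a.e.~$y$ when $G\in BV\pr{\Omega}$.

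Finally, for $G\in BV\pr{\Omega}$ the scalar identity applied on every open subset $A\subseteq\Omega$ reads $\abs{DG}\pr{A}=\int_{\R}P\pr{\{G>y\},A}\,dy=\int_{\R}\abs{D\rchi_{\{G>y\}}}\pr{A}\,dy$; since $A\mapsto\int_{\R}\abs{D\rchi_{\{G>y\}}}\pr{A}\,dy$ extends to a finite Borel measure on $\Omega$ (countable additivity by Tonelli, finiteness from $A=\Omega$) agreeing with $\abs{DG}$ on all open sets, the two coincide on every Borel set. For the vector identity I would return to the Tonelli computation above: for every $\Phi\in C^1_c\pr{\Omega;\R^n}$ one has $-\int_\Omega\scp{\Phi,DG}=\int_\Omega G\,\div\Phi\,dz=\int_{\R}\pr{\int_{\{G>y\}}\div\Phi\,dz}dy=-\int_{\R}\pr{\int_\Omega\scp{\Phi,D\rchi_{\{G>y\}}}}dy$, and by density of $C^1_c$ in $C_c\pr{\Omega;\R^n}$ this identifies the $\R^n$-valued measure $DG$ with $\int_{\R}D\rchi_{\{G>y\}}\,dy$, whence $DG\pr{B}=\int_{\R}D\rchi_{\{G>y\}}\pr{B}\,dy$ for every Borel $B$. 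The only genuinely hard point is the inequality ``$\ge$'': it bundles together the existence of a smooth sequence converging to $G$ strictly in variation, the classical coarea formula plus Sard's theorem to identify $P\pr{\{G_k>y\},\Omega}$ with $\H^{n-1}$ of the level set for smooth $G_k$, and the measure-theoretic bookkeeping (measurability in $y$, extraction of an a.e.-convergent subsequence of super-level sets) needed to invoke lower semicontinuity and Fatou; everything else is routine.
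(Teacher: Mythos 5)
The paper does not prove this statement: it is quoted verbatim from the literature, with the proof attributed to \cite{fleming} and \cite[Theorem 3.40]{AmbrosioFuscoPallara}. Your sketch correctly reproduces that standard argument --- the layer-cake plus Fubini computation for ``$\le$'', strict smooth approximation combined with the classical coarea formula, Sard's theorem, lower semicontinuity of the perimeter and Fatou for ``$\ge$'', and the localization to open sets followed by a monotone-class/Riesz identification for the two measure identities --- and the only nontrivial inputs (strict approximation in variation, a.e.\ $L^1_{\mathrm{loc}}$ convergence of the super-level sets along a subsequence, measurability in $y$) are exactly the ones you flag and are handled correctly. So the proposal is sound and coincides with the proof in the cited references; there is nothing in the paper itself to compare it against.
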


Now we are ready to state the analogue of Proposition \ref{prop-1} and to prove Theorem \ref{th-1}
again.

\begin{proposition} \label{prop-1BV}
Let $\Omega$ be an open subset of $\R^{n}$ and let $F \in BV \pr{{\Omega};\R}\cap C \pr{{\Omega};\R}$.
Then, for $\H^1-$almost every $y \in \R$, we have:
\begin{equation}\label{cobv}
    \int_{\left\{F>y\right\}} \mathrm{div}\, \Phi\ dz =
    -\int_{\partial^*\{F>y\}} \scp{\nu,\Phi}\ d \H^{n-1},
    \quad \forall \: \Phi \in C_c^1 \pr{\Omega;\R^{n}},
\end{equation}
were $\nu$ is the generalized inner normal to $\{F>y\}$.
\end{proposition}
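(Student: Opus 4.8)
The plan is to obtain \eqref{cobv} as an immediate consequence of the coarea formula in $BV$ (Theorem \ref{th-cobv}) combined with the divergence theorem for sets of finite perimeter \eqref{e-rem-1}, which encodes the structure results of De~Giorgi and Federer recalled above. First I would observe that, since $F\in BV(\Omega)$, one has $V(F,\Omega)=|DF|(\Omega)<+\infty$; hence Theorem \ref{th-cobv} gives
\[
\int_{\R}P\pr{\{F>y\},\Omega}\,dy=|DF|(\Omega)<+\infty ,
\]
so that the superlevel set $E_y:=\{F>y\}$ has finite perimeter in $\Omega$ for $\H^1$-almost every $y\in\R$. I would fix such a value of $y$ for the rest of the argument.

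For this $y$, since $E_y$ has finite perimeter in $\Omega$, the results of De~Giorgi and Federer recalled before \eqref{e-rem-1} give $D\chi_{E_y}=\nu_{E_y}\,\H^{n-1}\mres\mathcal{F}E_y$, together with $\mathcal{F}E_y\subseteq\partial^*E_y$ and $\H^{n-1}(\partial^*E_y\setminus\mathcal{F}E_y)=0$; in particular $\nu_{E_y}$ is defined $\H^{n-1}$-a.e.\ on $\partial^*E_y$, and the divergence identity \eqref{e-rem-1} holds for every $\Phi\in C^1_c(\Omega;\R^n)$, which is precisely \eqref{cobv} with $\nu=\nu_{E_y}$. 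One minor point to dispatch is that when $\mathcal{L}^n(E_y\cap\Omega)=+\infty$ the function $\chi_{E_y}$ is not in $BV(\Omega)$ globally; this is harmless, because any $\Phi\in C^1_c(\Omega;\R^n)$ is supported in some bounded open set $\Omega'$ with $\overline{\Omega'}\subset\Omega$, on which $\chi_{E_y}\in BV(\Omega')$ and \eqref{e-per-div}--\eqref{e-rem-1} apply verbatim.

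Finally I would record the role of the continuity hypothesis on $F$: it is not needed to establish \eqref{cobv}, but it guarantees that $\{F>y\}$ and $\{F<y\}$ are open, whence $\{F>y\}\subseteq E_y^{(1)}$, $\{F<y\}\subseteq E_y^{(0)}$, and therefore $\partial^*E_y\subseteq\{F=y\}$. This identification makes Proposition \ref{prop-1BV} the exact counterpart of Proposition \ref{prop-1}, with $\partial^*\{F>y\}$ taking the place of $\{F=y\}\setminus\mathrm{Crit}(F)$, and it is this form that will be used when specializing to $F=\Gamma(z_0;\cdot)$ in the re-proof of Theorem \ref{th-1}. There is no genuine obstacle here: the argument is purely a concatenation of the two quoted theorems, and the only care required is the bookkeeping just mentioned for superlevel sets of infinite measure and the routine remark that the exceptional set of $y$'s is $\H^1$-negligible.
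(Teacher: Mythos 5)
Your argument is exactly the paper's proof: the coarea formula in $BV$ (Theorem \ref{th-cobv}) gives that $\{F>y\}$ has finite perimeter in $\Omega$ for $\H^1$-a.e.\ $y$, and then \eqref{e-rem-1} applied to $E=\{F>y\}$ yields \eqref{cobv}. Your extra remarks on superlevel sets of infinite measure and on the role of the continuity of $F$ are correct but not part of the paper's (one-line) proof.
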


\begin{proof} By Theorem \ref{th-cobv}, 
the set $\left\{F>y\right\}$ has finite perimeter in $\Omega$ for $\H^1-$a.e. $y \in \R$, 
hence we may apply \eqref{e-rem-1} with $E=\{F>y\}$ and conclude.
\end{proof}

\medskip

As in Section \ref{SectionDivergence}, we have to cut the integration 
domain: therefore, we study the intersection between the super-level set of a 
generic function $G \in BV\pr{\Omega;\R}\cap C\pr{\Omega;\R}$ and a half-space 
$H_t=\left\{ x \in \R^n: \scp{x,e}<t \right\}$, 
for some $e \in {\mathbb S}^{n-1}$, $t \in \R$. First, we present a general formula that characterizes the 
intersection of two sets of finite perimeter for which we refer to Maggi's book, see \cite[Theorem 16.3]
{Maggi}. 

\begin{theorem}[\sc Intersection of sets of finite perimeter] \label{th-int}
If $A$ and $B$ are sets of locally finite perimeter in $\Omega$, and we let
$$
\left\{ \nu_A = \nu_B \right \} = 
\left\{ x \in {\mathcal F}A \cap {\mathcal F}B: \nu_A \pr{x}=\nu_B \pr{x} \right\},
$$
then $A \cap B$ is a set of locally finite perimeter in $\Omega$, with
\begin{equation} \label{e-9}
D \rchi_{A \cap B}=D \rchi_A \mres B^{\pr{1}}+D \rchi_B \mres A^{\pr{1}}
+ \nu_A \H^{n-1} \mres \left\{ \nu_A = \nu_B \right\}. 
\end{equation}
\end{theorem}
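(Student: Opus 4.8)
The plan is to compute the vector measure $D\rchi_{A\cap B}$ by a blow-up analysis at $\H^{n-1}$-a.e.\ point of $\Omega$, starting from the trivial identity $\rchi_{A\cap B}=\rchi_A\,\rchi_B$ and using the structure theorems of De Giorgi and Federer recalled above.

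First I would verify that $A\cap B$ has locally finite perimeter in $\Omega$. Mollifying with a standard kernel $\rho_\varepsilon$, put $u_\varepsilon:=\rchi_A*\rho_\varepsilon$ and $v_\varepsilon:=\rchi_B*\rho_\varepsilon$, so that $u_\varepsilon v_\varepsilon\to\rchi_{A\cap B}$ in $L^1_{\loc}(\Omega)$ and, since $0\le u_\varepsilon,v_\varepsilon\le1$,
\[
\abs{\nabla(u_\varepsilon v_\varepsilon)}\le u_\varepsilon\abs{\nabla v_\varepsilon}+v_\varepsilon\abs{\nabla u_\varepsilon}\le\abs{\nabla u_\varepsilon}+\abs{\nabla v_\varepsilon}.
\]
Integrating over any open $\Omega'$ with $\overline{\Omega'}\subset\Omega$ and letting $\varepsilon\to0$, the lower semicontinuity of the total variation together with $\int_{\Omega'}\abs{\nabla u_\varepsilon}\to P(A,\Omega')$ and $\int_{\Omega'}\abs{\nabla v_\varepsilon}\to P(B,\Omega')$ give the submodularity bound $P(A\cap B,\Omega')\le P(A,\Omega')+P(B,\Omega')<\infty$. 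By De Giorgi's structure theorem we then have $D\rchi_{A\cap B}=\nu_{A\cap B}\,\H^{n-1}\mres\mathcal{F}(A\cap B)$, so it is enough to identify $\mathcal{F}(A\cap B)$ and $\nu_{A\cap B}$ up to $\H^{n-1}$-negligible sets.

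The heart of the proof is the blow-up. Writing $E_{x,r}:=(E-x)/r$, one has $\rchi_{(A\cap B)_{x,r}}=\rchi_{A_{x,r}}\,\rchi_{B_{x,r}}$, and since both factors are bounded by $1$, whenever $A_{x,r}$ and $B_{x,r}$ converge in $L^1_{\loc}(\R^n)$ their product converges to the product of the limits. Applying Federer's theorem to $A$ and to $B$ separately, at $\H^{n-1}$-a.e.\ $x\in\R^n$ the point lies in one of $\{A^{(0)},\mathcal{F}A,A^{(1)}\}$ and in one of $\{B^{(0)},\mathcal{F}B,B^{(1)}\}$, which leaves nine cases. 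If $x\in A^{(0)}\cup B^{(0)}$ the blow-up of $A\cap B$ is the empty set and $x\in(A\cap B)^{(0)}$; if $x\in A^{(1)}\cap B^{(1)}$ the limit is $\R^n$ and $x\in(A\cap B)^{(1)}$; none of these points lies in $\mathcal{F}(A\cap B)$. If $x\in\mathcal{F}A\cap B^{(1)}$ the blow-up of $A\cap B$ is the half-space with inner normal $\nu_A(x)$, hence $x\in\mathcal{F}(A\cap B)$ with $\nu_{A\cap B}(x)=\nu_A(x)$, and symmetrically for $x\in\mathcal{F}B\cap A^{(1)}$; while if $x\in\mathcal{F}A\cap B^{(0)}$ (or the symmetric case) the limit is empty. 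It remains to analyse $x\in\mathcal{F}A\cap\mathcal{F}B$, and this is the step I expect to be the main obstacle: the claim is that for $\H^{n-1}$-a.e.\ such $x$ one has $\nu_A(x)=\pm\nu_B(x)$. This is a locality property of rectifiable sets (cf.\ \cite{AmbrosioFuscoPallara}): since $\mathcal{F}A$ and $\mathcal{F}B$ are countably $(n-1)$-rectifiable, they carry approximate tangent planes $\nu_A^\perp$ and $\nu_B^\perp$ at $\H^{n-1}$-a.e.\ point, and if these differed on a subset of $\mathcal{F}A\cap\mathcal{F}B$ of positive $\H^{n-1}$-measure, a blow-up there would force $\H^{n-1}(\mathcal{F}A\cap\mathcal{F}B\cap B_r(x))=o(r^{n-1})$ (two distinct hyperplanes through the origin meet in an $(n-2)$-plane), contradicting that $\H^{n-1}$-a.e.\ point of the $\H^{n-1}$-measurable set $\mathcal{F}A\cap\mathcal{F}B$ has positive upper density. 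When $\nu_A(x)=\nu_B(x)$ the two blow-ups are the same half-space, so $x\in\mathcal{F}(A\cap B)$ with $\nu_{A\cap B}(x)=\nu_A(x)$; when $\nu_A(x)=-\nu_B(x)$ they are complementary half-spaces, the limit has measure zero, and $x\in(A\cap B)^{(0)}$.

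Finally I would assemble the pieces. Up to $\H^{n-1}$-null sets,
\[
\mathcal{F}(A\cap B)=\bigl(\mathcal{F}A\cap B^{(1)}\bigr)\cup\bigl(\mathcal{F}B\cap A^{(1)}\bigr)\cup\{\nu_A=\nu_B\},
\]
and this is a disjoint union, since points of $\mathcal{F}B$ have density $1/2$ and hence lie neither in $B^{(1)}$ nor in $B^{(0)}$; on the three pieces $\nu_{A\cap B}$ equals $\nu_A$, $\nu_B$ and $\nu_A$, respectively. Since $D\rchi_A=\nu_A\,\H^{n-1}\mres\mathcal{F}A$ and $D\rchi_B=\nu_B\,\H^{n-1}\mres\mathcal{F}B$, restricting these to $B^{(1)}$ and $A^{(1)}$ and adding the contribution carried by $\{\nu_A=\nu_B\}$ yields precisely \eqref{e-9}. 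All the geometric content is thus concentrated in the tangency statement on $\mathcal{F}A\cap\mathcal{F}B$; once that and Federer's theorem are in hand, the remaining cases are a routine bookkeeping of blow-up limits.
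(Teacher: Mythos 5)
Your argument is correct, but note that the paper does not prove this theorem at all: it quotes it verbatim from Maggi's book (Theorem 16.3), and your blow-up proof — submodularity of the perimeter for local finiteness, Federer's three-way case analysis for $A$, $B$ and $A\cap B$, and the locality property $\nu_A=\pm\nu_B$ $\H^{n-1}$-a.e.\ on $\mathcal{F}A\cap\mathcal{F}B$ — is essentially the standard proof given there. The only steps needing a touch more care are that $\int_{\Omega'}\abs{\nabla u_\varepsilon}\to P(A,\Omega')$ holds only when $P(A,\partial\Omega')=0$ (the bound $\limsup_\varepsilon\int_{\Omega'}\abs{\nabla u_\varepsilon}\le\abs{D\rchi_A}(\overline{\Omega'})$ suffices for your purpose), and that placing a point whose blow-up is a half-space into $\mathcal{F}(A\cap B)$ relies on Federer's theorem applied to $A\cap B$ itself, which is why the local finiteness of $P(A\cap B,\cdot)$ must be established first, as you correctly arranged.
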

In the case in which $B$ is a half-space, formula \eqref{e-9} can be greatly simplified; 
indeed we can prove the following corollary.  

\begin{corollary}[\sc Intersections with a half-space]\label{cor-1}
Let $E$ be a set of locally finite perimeter in $\Omega$ and let 
$H_t=\left\{ z \in \R^n : \scp{z,e} < t\right\}$ for some $e \in {\mathbb S}^{n-1}$, $t \in \R$. Then, for every 
$t \in \R$, $E \cap H_t$ is a set of locally finite perimeter in $\Omega$ and moreover, 
for $\H^{1}-$almost every $t \in \R$, 
$$
D \rchi_{E \cap H_t}=D \rchi_E \mres H_t-e\H^{n-1}\mres \pr{E \cap \left\{ \scp{x,e}=t\right\}}.
$$
\end{corollary}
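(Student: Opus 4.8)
The plan is to specialize Theorem \ref{th-int} to $A=E$ and $B=H_t$. The data for the half-space are elementary: for every $t\in\R$ the open set $H_t$ has locally finite perimeter in $\R^n$ (hence in $\Omega$), its set of points of density one is $H_t^{(1)}=H_t$, its reduced boundary is the hyperplane $\Pi_t:=\{z\in\R^n:\ \scp{z,e}=t\}$, its generalized inner normal is the constant field $\nu_{H_t}\equiv -e$, and therefore $D\rchi_{H_t}=-e\,\H^{n-1}\mres\Pi_t$. Since $H_t$ is always a set of locally finite perimeter, Theorem \ref{th-int} already gives that $E\cap H_t$ is a set of locally finite perimeter in $\Omega$ for \emph{every} $t\in\R$; this is the first assertion.

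Plugging the above into formula \eqref{e-9} gives, for every $t\in\R$,
$$
D\rchi_{E\cap H_t}=D\rchi_E\mres H_t\;-\;e\,\H^{n-1}\mres\bigl(E^{(1)}\cap\Pi_t\bigr)\;+\;\nu_E\,\H^{n-1}\mres\bigl\{x\in\mathcal{F}E\cap\Pi_t:\ \nu_E(x)=-e\bigr\}.
$$
It remains to discard the last term and to replace $E^{(1)}$ by $E$ in the middle term, for $\H^1$-a.e.\ $t$. For the last term, recall from De Giorgi's structure theorem that $|D\rchi_E|=\H^{n-1}\mres\mathcal{F}E$ is a locally finite, hence $\sigma$-finite, measure; consequently $\mathcal{F}E$ is a Borel set with $\mathcal{L}^n(\mathcal{F}E)=0$. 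Choosing coordinates in which $e$ is the last basis vector and applying Fubini--Tonelli to the non-negative function $\rchi_{\mathcal{F}E}$ yields
$$
\int_{\R}\H^{n-1}\bigl(\mathcal{F}E\cap\Pi_t\bigr)\,dt=\mathcal{L}^n(\mathcal{F}E)=0,
$$
so that $\H^{n-1}(\mathcal{F}E\cap\Pi_t)=0$, and a fortiori the last term above vanishes, for $\H^1$-a.e.\ $t$. Likewise, $E\triangle E^{(1)}$ is $\mathcal{L}^n$-null by Lebesgue's density theorem, and the same Fubini argument gives $\int_{\R}\H^{n-1}\bigl((E\triangle E^{(1)})\cap\Pi_t\bigr)\,dt=0$, hence $\H^{n-1}\mres(E^{(1)}\cap\Pi_t)=\H^{n-1}\mres(E\cap\Pi_t)$ for $\H^1$-a.e.\ $t$. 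Combining the two facts with the displayed decomposition gives, for $\H^1$-a.e.\ $t$,
$$
D\rchi_{E\cap H_t}=D\rchi_E\mres H_t-e\,\H^{n-1}\mres\bigl(E\cap\{\scp{x,e}=t\}\bigr),
$$
which is the assertion.

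The only delicate point is the ``generic transversality'' underlying the vanishing of the extra boundary term $\nu_A\,\H^{n-1}\mres\{\nu_A=\nu_B\}$ in Theorem \ref{th-int}: for the exceptional values of $t$ the hyperplane $\Pi_t$ may meet $\mathcal{F}E$ in a set of positive $\H^{n-1}$ measure (for instance when a flat portion of $\mathcal{F}E$ is parallel to $\Pi_t$), and then the formula genuinely acquires a further term. As shown above, ruling this out for $\H^1$-almost every $t$ reduces to the observation that $\mathcal{F}E$, having $\sigma$-finite $(n-1)$-dimensional Hausdorff measure, is Lebesgue negligible, followed by one application of Fubini's theorem; everything else is bookkeeping with the decomposition \eqref{e-9}.
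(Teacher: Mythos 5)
Your proof is correct and follows essentially the same route as the paper: specialize Theorem \ref{th-int} to $B=H_t$ using the explicit data $H_t^{(1)}=H_t$, $\mathcal{F}H_t=\left\{\scp{x,e}=t\right\}$, $\nu_{H_t}\equiv -e$, and then use Fubini to discard the exceptional contributions for $\H^1$-almost every $t$. If anything, you are slightly more careful than the paper in disposing of the term $\nu_E\,\H^{n-1}\mres\left\{\nu_E=\nu_{H_t}\right\}$, which you eliminate by noting that $\mathcal{F}E$ has $\sigma$-finite $\H^{n-1}$ measure, hence is $\mathcal{L}^n$-negligible, so that $\H^{n-1}\pr{\mathcal{F}E\cap\left\{\scp{x,e}=t\right\}}=0$ for a.e.\ $t$ --- a point the paper's proof treats only implicitly.
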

\begin{proof}
The half-space $H_t$ is clearly a set of locally finite perimeter in $\Omega$ for every $t\in\R$, 
and for every $t \in \R$ we have, $H_t^{\pr{1}}=H_t$, 
$\mathcal{F}H_t=\partial H_t =\left\{ \scp{x,e}=t \right\}$ and $\nu_{H_t} \equiv -e$. 
Then, applying Theorem \ref{th-int} we see that $E \cap H_t$ is a set of locally finite perimeter 
in $\Omega$ for every $t \in \R$ and \eqref{e-9} reads
\[
D \rchi_{E \cap H_t}=D \rchi_E \mres H_t - e \H^{n-1} \mres (E^{\pr{1}}\cup 
\left\{ \nu_E = \nu_{H_t} \right\}) .
\]
Since by Fubini theorem 
\[
0=\mathcal{L}^n (E\triangle E^{(1)}) = 
\int_{\R} \H^{n-1}\pr{(E\triangle E^{(1)}) \cap \left\{ \scp{x,e}=t \right\}}\ dt ,
\]
for $\H^1-$a.e. $t \in \R$ we have 
\[
\H^{n-1}\pr{E \triangle E^{\pr{1}} \cap \left\{ \scp{x,e}=t \right\}}=0.
\]
Therefore, 
\[
D \rchi_{H_t} \mres E = D \rchi_{H_t} \mres E^{\pr{1}} = -e\H^{n-1} \mres (E\cap\{\scp{x,e}=t\})
= -e\H^{n-1} \mres (E\cap\{\nu_E=\nu_{H_t}\})
\] 
for $\H^1-$a.e. $t \in \R$ and the thesis follows.
\end{proof}

\medskip

The following corollary allows us to perform (with some modifications) the last part of the 
proof of our main result.

\begin{corollary}\label{prop-2-BV}
Let $\Omega=\R^{N+1} \setminus \left\{ \pr{z_0} \right\}$, $G \in BV\pr{\Omega;\R} 
\cap C \pr{\Omega;\R}$, 
$H_t=\left\{ z \in \R^{N+1} : \scp{z,e} < t\right\}$ for some $e \in {\mathbb S}^N$, $t \in \R$. Then, 
for $\H^{1}-$almost every $w \in \R$ and for every $t < t_0$ the set 
$E \cap H_t$ has locally finite perimeter in $\Omega$ and 
$$
\int_{\left\{ G > w \right\} \cap H_t}\mathrm{div}\, \Phi\ dz=
-\int_{\partial^*\{ G > w \} \cap H_t}\scp{\nu,\Phi}\ d \H^N
+\int_{\left\{ G >w \right\} \cap \left\{ \scp{x,e}=t \right\}} \scp{e,\Phi}\ d\H^N,
$$
for every $\Phi \in C^1_c \pr{\Omega;\R^n}$, where $\nu$ is the generalized inner normal to 
$\partial^*(\{G>w\})$.
In particular, if $e=\pr{0,\ldots,0,1}$, for every $\varepsilon > 0$
$$
\int_{\left\{ G > w \right\} \cap \left\{ t<t_0-\varepsilon \right\}}
\!\!\!\mathrm{div}\, \Phi\ dz
=-\int_{\partial^*\{ G>w \} \cap \left\{ t<t_0-\varepsilon \right\}}
\!\!\scp{\nu,\Phi}d \H^{N}
+\int_{\left\{ G>w \right\} \cap \left\{ t=t_0-\varepsilon \right\}} 
\!\!\scp{e,\Phi}d\H^{N}. 
$$
\end{corollary}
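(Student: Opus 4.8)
The plan is to deduce the formula from two ingredients already at our disposal: Proposition~\ref{prop-1BV}, which gives the divergence theorem for the super-level set $\{G>w\}$ for $\H^1$-almost every $w$, and Corollary~\ref{cor-1}, which computes the perimeter of the intersection of a finite-perimeter set with a half-space. Since every test field $\Phi$ is compactly supported in $\Omega$, only the behaviour of $G$ on a fixed bounded subset of $\Omega$ is relevant, so there is no loss in reading the hypothesis as $G\in BV_{\mathrm{loc}}(\Omega;\R)\cap C(\Omega;\R)$ and applying Proposition~\ref{prop-1BV} on a bounded open set that contains $\mathrm{supp}\,\Phi$ and is compactly contained in $\Omega$. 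Thus, for $\H^1$-almost every $w\in\R$ the set $E:=\{G>w\}$ has (locally) finite perimeter in $\Omega$, $D\rchi_E=\nu_E\,\H^N\mres{\mathcal F}E=\nu_E\,\H^N\mres\partial^*E$, and
\[
\int_{E}\mathrm{div}\,\Phi\ dz=-\int_{\partial^*E}\scp{\nu_E,\Phi}\ d\H^{N},\qquad\forall\,\Phi\in C^1_c(\Omega;\R^{N+1}).
\]

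Now fix such a $w$. The half-space $H_t=\{\scp{z,e}<t\}$ has locally finite perimeter in $\Omega$, with $H_t^{(1)}=H_t$, ${\mathcal F}H_t=\partial H_t=\{\scp{z,e}=t\}$ and $\nu_{H_t}\equiv-e$; hence Corollary~\ref{cor-1} applies to $E$ and $H_t$: the set $E\cap H_t$ has locally finite perimeter in $\Omega$ for \emph{every} $t$, and for $\H^1$-almost every $t$
\[
D\rchi_{E\cap H_t}=D\rchi_E\mres H_t-e\,\H^{N}\mres\bigl(E\cap\{\scp{z,e}=t\}\bigr).
\]
Testing the identity \eqref{e-per-div} for $E\cap H_t$ against $\Phi\in C^1_c(\Omega;\R^{N+1})$, inserting the previous display, and using $D\rchi_E\mres H_t=\nu_E\,\H^N\mres(\partial^*E\cap H_t)$ (cf.\ \eqref{e-rem-1}), one obtains
\[
\int_{\{G>w\}\cap H_t}\mathrm{div}\,\Phi\ dz=-\int_{\partial^*\{G>w\}\cap H_t}\scp{\nu_E,\Phi}\ d\H^{N}+\int_{\{G>w\}\cap\{\scp{z,e}=t\}}\scp{e,\Phi}\ d\H^{N}.
\]
For $e=(0,\dots,0,1)$ and $t=t_0-\varepsilon$ this is the last assertion, the slab $\{t<t_0-\varepsilon\}$ and its closure lying at positive distance from the excluded point $z_0$, so that the puncture of $\Omega$ plays no role there.

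The step that requires care — and, I expect, the main obstacle — is upgrading ``$\H^1$-almost every $t$'' to ``every $t<t_0$''. The exceptional values of $t$ in Corollary~\ref{cor-1} are exactly those for which the hyperplane $\{\scp{z,e}=t\}$ carries positive $\H^N$-measure of one of the density/boundary sets entering \eqref{e-9}, namely $E\triangle E^{(1)}$, $E^{(1/2)}$ or ${\mathcal F}E$; by Lebesgue's density theorem together with Fubini such $t$'s are $\H^1$-null, but \emph{a priori} not absent. Here continuity of $G$ does the work: $E=\{G>w\}$ is open, so $E\subset E^{(1)}$ and each of those sets is contained in $\partial\{G>w\}\subset\{G=w\}$; hence a bad slice can occur only if the level set $\{G=w\}$ contains a subset of positive $\H^N$-measure lying inside a single hyperplane $\{\scp{z,e}=t\}$, that is, a flat horizontal piece. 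For the function of interest this does not happen: when $G=\Gamma(z_0;\cdot)$ the level set $\{G=w\}$ with $w=r^{-N}$ is the parabolic sphere $\psi_r(z_0)$, whose Gaussian-type shape (cf.\ Fig.~1) has no $N$-dimensional flat piece in any time hyperplane, so the formula holds for all $t<t_0$. For a completely general $G$ one should either impose such a non-degeneracy assumption or read the conclusion as holding for $\H^1$-almost every $t<t_0$; the latter already suffices for the alternative proof of Theorem~\ref{th-1}, where the cutting times $\varepsilon_k$ can be chosen outside the exceptional null set.
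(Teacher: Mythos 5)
Your core derivation is exactly the one the paper intends (no proof is actually written out for this corollary): Proposition~\ref{prop-1BV} selects the full-measure set of levels $w$ for which $E=\{G>w\}$ has finite perimeter and $D\rchi_{E}=\nu\,\H^{N}\mres\partial^{*}E$, and Corollary~\ref{cor-1} combined with \eqref{e-per-div} then produces the displayed identity. So the proposal is correct in substance and coincides with the paper's route.

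Your reservation about upgrading ``$\H^{1}$-a.e.\ $t$'' to ``every $t<t_0$'' is not just a scruple: the statement as printed is in fact too strong for a general $G\in BV(\Omega)\cap C(\Omega)$. Take $N=1$ and $G(x,s)=\max\bigl(0,\min(-s,\,2+s,\,1-|x|)\bigr)$, a compactly supported Lipschitz (hence $BV\cap C$) function. For every $w\in(0,1)$ the set $\{G>w\}$ is an open rectangle whose top face lies in the hyperplane $\{s=-w\}$ with generalized inner normal equal to $-e$; at $t=-w$ that face contributes to $\int_{\{G>w\}\cap H_t}\mathrm{div}\,\Phi\,dz$ but belongs neither to $\partial^{*}\{G>w\}\cap H_t$ nor to $\{G>w\}\cap\{s=t\}$, so the asserted identity fails there. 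Since this happens for a positive-measure set of $w$'s, no discarding of a null set of levels can rescue ``every $t$''; the honest conclusion is ``for $\H^{1}$-a.e.\ $t$'' (equivalently a.e.\ $\varepsilon$), exactly as in the Dubovicki\v{\i}-based Proposition~\ref{prop-1bis}, and this is all that is used, since in the proof of Theorem~\ref{th-1} the cutting times $\varepsilon_k$ are chosen inside the admissible full-measure set. One caveat on your closing remark: the assertion that for $G=\Gamma(z_0;\cdot)$ the level set $\psi_r(z_0)$ meets every time slice in an $\H^{N}$-null set is plausible but is not established by appealing to its shape --- a $C^{1}$ (even $C^{2}$) function can have a level set of positive measure where its gradient vanishes --- so you should drop that heuristic and simply work with the a.e.-$\varepsilon$ version, as you yourself suggest.
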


Notice that the difference between Proposition \ref{prop-1bis} and 
Corollary \ref{prop-2-BV} is that in the former we can exclude the set of critical points of $G$ from 
the surface integral, thanks to Dubovicki\v{\i} theorem, and we know that $\nu$ is given by the 
normalized gradient of $G$ {\em everywhere} in the integration set, whereas in the latter we don't 
need to know any estimate on the size of ${\rm Crit}(G)$ and $\nu$ is defined $\H^N-${\em a.e.} on 
the integration set (still coinciding with the normalized gradient of $G$ out of ${\rm Crit}(G)$, 
of course). First, notice that we apply Corollary \ref{prop-2-BV} to $G(z)=\Gamma(z_0;z)$, which is 
$C^1(\Omega)$, hence Lipschitz on bounded sets. As a consequence, $\partial\{G>w\}\subseteq\{G=w\}$ 
and comparing the coarea formulas \eqref{e-co} and \eqref{cobv}, we deduce that 
$\H^N(\{G=w\}\setminus\partial^*\{G>w\})=0$ for $\H^{1}$-a.e. $w$. 
Let us see how this entails modifications of the 
proof of Theorem \ref{th-1}: the proof goes in the same vein until \eqref{eq-div-3i}, 
\eqref{eq-div-3}, which in the present context are replaced by
\begin{align*}
 \lim_{k \to +\infty} \int_{\psi_r(z_0) \cap \left\{ t<t_0-\varepsilon_k \right\}}
\scp{\nu,\Phi}d \H^{N} & = \int_{\psi_r(z_0)} K (z_0;z) u(z)  d \H^{N}
\\
& = \int_{\psi_r(z_0)\setminus \mathrm{Crit}\pr{\Gamma}} K (z_0;z) u(z)  d \H^{N}
\end{align*} 
where the first equality follows from Corollary \ref{prop-2-BV}, as explained, and the last equality
follows from the fact that the kernel $K$ vanishes in ${\rm Crit}(\Gamma)$. 
The rest of the proof needs no modifications.

\def\cprime{$'$} \def\cprime{$'$} \def\cprime{$'$}
  \def\lfhook#1{\setbox0=\hbox{#1}{\ooalign{\hidewidth
  \lower1.5ex\hbox{'}\hidewidth\crcr\unhbox0}}} \def\cprime{$'$}
  \def\cprime{$'$}


\begin{thebibliography}{10}

\bibitem{AmbrosioFuscoPallara}
{\sc L.~Ambrosio, N.~Fusco, and D.~Pallara}, {\em Functions of bounded
  variation and free discontinuity problems}, Oxford Mathematical Monographs,
  The Clarendon Press, Oxford University Press, New York, 2000.

\bibitem{Azencott84}
{\sc R.~Azencott}, {\em Densit\'{e} des diffusions en temps petit: d\'{e}veloppements asymptotiques. {I}}, in: Seminar on probability, {XVIII}, Vol. 1059 of Lecture Notes in Math., Springer, Berlin, 1984, pp. 402--498.

\bibitem{BojHajStrz}
{\sc B.~Bojarski, P.~Ha{j\l{}}asz, and P.~Strzelecki}, {\em Sard's theorem for mappings in {H}\"{o}lder and {S}obolev spaces}, 
Manuscripta Math., 118 (2005), pp.~383--397.

\bibitem{CMPCS}
{\sc G. Citti, M. Manfredini, A. Pinamonti, F. Serra Cassano},
{\em Poincar\'{e}-type inequality for Lipschitz continuous vector fields},
J. Math. Pures Appl., 105 (2016), pp. 265--292.

\bibitem{CupiniLanconelli2021}
{\sc G.~Cupini and E.~Lanconelli}, {\em On Mean Value formulas for solutions to second order linear PDEs}, 
Ann. Sc. Norm. Super. Pisa Cl. Sci. (5) Vol. XXII (2021), pp. 1--33.

\bibitem{DeGiorgi2}
{\sc E.~De~Giorgi}, {\em Su una teoria generale della misura
  {$(r-1)$}-dimensionale in uno spazio ad {$r$} dimensioni}, Ann. Mat. Pura
  Appl. (4), 36 (1954), pp.~191--213, and also
{\em Ennio De~Giorgi: Selected Papers},
{\sc (L. Ambrosio, G. Dal Maso, M. Forti, M. Miranda, S. Spagnolo eds.)}
Springer, 2006, 79-99. English translation, {\em Ibid.}, 58-78.

\bibitem{DeGiorgi3}
\leavevmode\vrule height 2pt depth -1.6pt width 23pt, {\em Nuovi teoremi
  relativi alle misure {$(r-1)$}-dimensionali in uno spazio ad {$r$}
  dimensioni}, Ricerche Mat., 4 (1955), pp.~95--113, and also
{\em Ennio De~Giorgi: Selected Papers},
{\sc (L. Ambrosio, G. Dal Maso, M. Forti, M. Miranda, S. Spagnolo eds.)}
Springer, 2006, 128-144. English translation, {\em Ibid.}, 111-127.

\bibitem{Dubovickii}
{\sc A.~Y. Dubovicki\u{\i}}, {\em On the structure of level sets of
  differentiable mappings of an {$n$}-dimensional cube into a {$k$}-dimensional
  cube}, Izv. Akad. Nauk SSSR. Ser. Mat., 21 (1957), pp.~371--408.

\bibitem{FabesGarofalo}
{\sc E.~B. Fabes and N.~Garofalo}, {\em Mean value properties of solutions to
  parabolic equations with variable coefficients}, J. Math. Anal. Appl., 121
  (1987), pp.~305--316.

\bibitem{federer1969geometric}
{\sc H.~Federer}, {\em Geometric measure theory}, Grundlehren der
  mathematischen Wissenschaften, Springer, 1969.

\bibitem{fleming}
{\sc W.~Fleming and R.~Rischel}, {\em An integral formula for total gradient
  variation}, Arch. Math, 11 (1960), pp.~218--222.

\bibitem{Friedman}
{\sc A.~Friedman}, {\em Partial differential equations of parabolic type},
  Prentice-Hall Inc., Englewood Cliffs, N.J., 1964.

\bibitem{GarofaloLanconelli-1989}
{\sc N.~Garofalo and E.~Lanconelli}, {\em Asymptotic behavior of fundamental
  solutions and potential theory of parabolic operators with variable
  coefficients}, Math. Ann., 283 (1989), pp.~211--239.

\bibitem{Kupcov4}
{\sc L.~P. Kupcov}, {\em On parabolic means}, Dokl. Akad. Nauk SSSR, 252
  (1980), pp.~296--301.

\bibitem{Maggi}
{\sc F.~Maggi}, {\em Sets of Finite Perimeter and Geometric Variational
  Problems: An Introduction to Geometric Measure Theory}, Cambridge Studies in
  Advanced Mathematics, Cambridge University Press, 2012.

\bibitem{Pini1951}
{\sc B.~Pini}, {\em Sulle equazioni a derivate parziali, lineari del secondo
  ordine in due variabili, di tipo parabolico}, Ann. Mat. Pura Appl. (4), 32
  (1951), pp.~179--204.

\bibitem{Polidoro2}
{\sc S.~Polidoro}, {\em On a class of ultraparabolic operators of {K}olmogorov-{F}okker-{P}lanck type},
Le Matematiche (1), 49 (1994), pp.~53--105.

\bibitem{Sard}
{\sc A.~Sard}, {\em The measure of the critical values of differentiable maps},
  Bull. Amer. Math. Soc., 48 (1942), pp.~883--890.

\bibitem{Varadhan1967behavior}
{\sc S.~R.~S. Varadhan}, {\em On the behavior of the fundamental solution of the heat equation with variable coefficients}, Communications on Pure and Applied Mathematics 20~(2) (1967), pp.~431--455.

\bibitem{Varadhan1967diffusion}
{\sc S.~R. Varadhan}, {\em Diffusion processes in a small time interval}, Communications on Pure and Applied Mathematics 20~(4) (1967), pp.~659--685.

\bibitem{Watson1973}
{\sc N.~A. Watson}, {\em A theory of subtemperatures in several variables},
  Proc. London Math. Soc. (3), 26 (1973), pp.~385--417.

\bibitem{whitney1935}
{\sc H.~Whitney}, {\em A function not constant on a connected set of critical
  points}, Duke Math. J., 1 (1935), pp.~514--517.

\end{thebibliography}
\end{document}